\documentclass{article}

\usepackage{arxiv}

\usepackage[utf8]{inputenc} 
\usepackage[T1]{fontenc}    
\usepackage{hyperref}       
\usepackage{url}            
\usepackage{booktabs}       
\usepackage{amsfonts}       
\usepackage{amsmath}
\usepackage{amsthm}
\usepackage{bm}             
\usepackage{nicefrac}       
\usepackage{float}          
\usepackage{microtype}      
\usepackage{lipsum}
\usepackage{graphicx}
\usepackage{subcaption}    
\usepackage[ruled]{algorithm2e} 
\graphicspath{ {./images/} }

\newtheorem{theo}{Theorem}[section]
\newtheorem{proposition}{Proposition}[section]
\newtheorem{lemma}{Lemma}[section]

\newtheorem{remark}{Remark}[section]
\newtheorem{corollary}{Corollary}[section]

\newcommand{\projector}{\ensuremath{\Pi^\nabla_{E}}}
\newcommand{\ustar}{\ensuremath{\mathbf{u}^*}}

\newcommand{\abilinear}[4]{\ensuremath{a_{h,#1}^{#2}\left(#3,#4;\ustar\right)}}
\newcommand{\missingkernel}{\ensuremath{\ker(\projector)}}

\newcommand{\increment}[1]{\ensuremath{\delta \mathbf{#1}}}
\makeatletter
\newcommand{\basis}[1][\@nil]{\ensuremath{\bm{\varphi}\ifx#1\@nil\else_{#1}\fi}}
\makeatother
\makeatletter
\newcommand{\ahE}{\@ifnextchar[{\ahE@opt}{\ensuremath{a_{h,E}}}}
\def\ahE@opt[#1]{%
  \@ifnextchar[{\ahE@opttwo[#1]}{\ahE@optone[#1]}%
}
\def\ahE@optone[#1]{\ensuremath{a_{h,E}^{#1}}}
\def\ahE@opttwo[#1][#2]{\ensuremath{a_{h,E}^{#1,\mathrm{#2}}}}
\makeatother

\newcommand{\fracsobolev}{\ensuremath{H^{1/2}(\partial E)}}

\title{An Investigation of Stabilization Scaling in Finite-Strain Virtual Element Methods for Hyperelasticity}

\author{
 Paulo Akira F. Enabe \\
    Escola Politénica\\
    University of São Paulo\\
    Department of Structural and Geotechnical Engineering\\
  \texttt{paulo.enabe@usp.br} \\
   \And
 Rodrigo Provasi \\
    Escola Politénica\\
    University of São Paulo\\
    Department of Structural and Geotechnical Engineering\\
  \texttt{provasi@usp.br} \\
}

\begin{document}
\maketitle
\begin{abstract}
    Low-order virtual element methods (VEM) compute a consistent finite-strain contribution through polynomial projections and rely on stabilization to control the unresolved modes in the projector kernel. In current hyperelastic VEM practice, stabilization is often defined by integrating a nonlinear surrogate energy over an auxiliary sub-triangulation and scaled through modified Lam\'e parameters and incompressibility factors; this can introduce sensitivity to the arbitrary internal tessellation, complicate consistent Newton linearization, and, most critically, inject bulk-dependent proxies into shear-type kernel penalties, artificially stiffening isochoric missing modes in the nearly incompressible regime. This work develops a submesh-free, kernel-only stabilization that decouples deviatoric and volumetric channels and is explicitly designed to scale like the current Newton tangent energy on the kernel: the deviatoric term is scaled solely by a shear measure and enhanced by bounded geometry-driven directional weights, while the volumetric term is scaled by an independent bulk measure and can be capped or suppressed as $\nu\to 1/2$. A spectral framework is established in which the canonical VEM stability requirement on the kernel is characterized by generalized Rayleigh quotients and eigenvalue bounds, and it is shown under standard polygon regularity assumptions that the deviatoric stabilization is uniformly equivalent to $\mu_E|\cdot|_{1,E}^2$ on the kernel with constants independent of mesh size and Poisson ratio. Element-level diagnostics confirm that classical surrogate-based stabilizations assign bulk-driven energy to isochoric kernel modes as $\nu\to 1/2$, whereas the proposed decoupled stabilization remains shear-scaled; kernel spectra and Cook's membrane simulations in the nearly incompressible regime further support improved robustness across polygonal mesh families.
\end{abstract}


\section{Introduction}

\paragraph{}
The virtual element method (VEM) is a conforming Galerkin framework for partial differential equations on general polygonal and polyhedral meshes, in which local approximation spaces are not represented by explicit interior basis functions. Instead, each virtual space is characterized by degrees of freedom and computable polynomial projection operators, which enable the exact evaluation of the consistent contribution on a prescribed polynomial subspace. In the lowest-order conforming setting considered here, the discrete fields are uniquely determined by vertex degrees of freedom on the boundary and admit a computable constant-gradient representation through the polynomial projection. This structure induces the characteristic VEM decomposition into a consistency contribution, expressed solely in terms of projected quantities, and a stabilization contribution that supplies control on the complementary subspace not captured by the projection. Applications of the Virtual Element Method can be found in \cite{Xu2025,Sharma2025,Qiu2025, Wriggers2022, Wang2026}.

\paragraph{}
The objective of this work is the design of stabilization terms for finite-strain hyperelastic VEM that remain robust with respect to polygon geometry and material parameters, with particular emphasis on the nearly incompressible regime. Since the consistent contribution depends only on the projected component, all deformation patterns in the projector kernel are invisible to the consistent energy and must receive stiffness exclusively through stabilization; consequently, the stabilization acts as an energy (eigenvalue) assignment mechanism for unresolved deformation modes. Many existing finite-strain stabilizations combine two features that can compromise this assignment: evaluation through an internal sub-triangulation and parameter scalings that allow volumetric proxies to enter shear-type penalties. The first introduces a dependence on an arbitrary internal tessellation that is extrinsic to the VEM space, while the second may distort the scaling of isochoric kernel modes as $\nu \to 1/2$. The goal, therefore, is to construct a submesh-free stabilization that acts only on $\missingkernel$, preserves deviatoric/volumetric decoupling within the stabilization channel, and is spectrally equivalent to the element's current tangent energy on $\missingkernel$, with equivalence constants that remain uniform over admissible polygon families and do not deteriorate as $\nu \to 1/2$.

\paragraph{}
In low-order VEM, the consistent contribution is computed via polynomial projections, whereas stability and coercivity are recovered by adding a stabilization that acts exclusively on the unresolved component of the discrete space. In the linear setting, this mechanism is by now classical: the stabilization is not intended to model additional physics, but to provide spectral control on the projector kernel with the correct scaling relative to the target bilinear form. This viewpoint is articulated, for instance, in \cite{beirao2013vem,beirao2014hitchhiker} and systematized in the stabilization survey \cite{Mascotto2023}. Extending the same principle to finite-strain hyperelasticity is conceptually direct, but it raises a delicate design requirement: the missing modes must be assigned physically meaningful energies without contaminating the consistent projection and without introducing artifacts that depend on the mesh geometry or on material parameters, as emphasized in \cite{Mascotto2023}.

\paragraph{}
A widespread strategy in nonlinear (hyperelastic) VEM is to define the stabilization through a surrogate strain-energy density and to approximate the non-computable contribution by integrating that surrogate over an internal subdivision of the polygon into sub-elements; representative examples are given in \cite{Wriggers2017,vanHuyssteen2020}. This choice has two structural drawbacks. First, the stabilization becomes tied to an arbitrary internal tessellation: its value, and hence the stiffness assigned to the missing modes, may depend on the orientation and quality of the chosen sub-triangulation rather than on the polygonal element and the VEM space itself, as can be inferred from the constructions in \cite{Wriggers2017,vanHuyssteen2020}. Second, the stabilization channel ceases to be a pure kernel-control device and instead inherits the full constitutive complexity of the hyperelastic model, together with the burden of consistent differentiation for Newton-type methods; this is precisely the regime in which one would like the stabilization to remain inexpensive, robust, and submesh-free, as already noted in \cite{Wriggers2017}.

\paragraph{}
A second, equally consequential, set of choices concerns how the stabilization is scaled with material parameters, particularly near incompressibility. In several hyperelastic VEM formulations, the stabilization is parameterized through modified Lam\'e constants or equivalent compressibility factors; see, e.g., \cite{Wriggers2017,vanHuyssteen2020}. In \cite{vanHuyssteen2020}, for instance, an incompressibility-dependent scaling is introduced and a truncated Taylor expansion of the volumetric Lam\'e parameter is employed to improve robustness as $\nu \to 1/2$. While such modifications can be effective in practice, they can blur the essential deviatoric/volumetric separation: volumetric proxies are injected into otherwise shear-type stabilization choices, and a volumetric penalty may remain active on the projector kernel even in the nearly incompressible regime, a concern discussed in \cite{vanHuyssteen2020} and framed by the stability yardstick in \cite{Mascotto2023}. The resulting risk is an incorrect spectral assignment on the missing modes: instead of scaling like the current tangent energy on $\ker\Pi$, the stabilization may over-penalize isochoric mechanisms (leading to artificially stiff deviatoric responses) and/or leave a residual compressibility trace when volumetric stiffness should enter primarily through the consistent part; this interpretation is consistent with the discussion in \cite{Mascotto2023}.

\paragraph{}
A closely related formulation to \cite{Wriggers2017,vanHuyssteen2020} is developed in \cite{vanHuyssteen2021} for plane problems of transversely isotropic hyperelasticity. The work in \cite{vanHuyssteen2021} retains the same structural stabilization paradigm as \cite{vanHuyssteen2020}: the non-computable contribution is approximated by introducing a neo-Hookean-type stabilization strain-energy density with modified parameters and evaluating the resulting stabilization energy through an internal decomposition of each polygonal element into triangular subdomains. In contrast to the isotropic setting, \cite{vanHuyssteen2021} proposes a stabilization parameter construction that explicitly incorporates anisotropic material features, together with geometric measures of element distortion, in order to obtain robust performance across a range of transversely isotropic material models and in limiting regimes including near-incompressibility and near-inextensibility.

\paragraph{}
Closely related stabilization constructions also appear in finite-deformation VEM formulations for elasto--plasticity. The works \cite{WriggersHudobivnik2017,Hudobivnik2019} adopt the canonical low-order VEM split into a projected (constant-gradient) consistency contribution and a complementary term acting on the non-projected part, and they introduce the stabilization at the level of an incremental pseudo-energy functional. In particular, the stabilization is built through an energy-difference structure of the form $U^{\mathrm{stab}} = \widehat U(u_h) - \widehat U(\Pi u_h)$, where $\widehat U$ is a modified positive-definite pseudo-energy evaluated on an internal simplex partition (triangles in 2D and tetrahedra in 3D), while the subtraction enforces consistency of the overall formulation. The constitutive content entering the stabilization is chosen in a neo-Hookean-type form with modified parameters, selected through geometry-based criteria intended to enhance the stiffness assigned to missing (hourglass/bending-type) mechanisms and to mitigate locking effects in nearly incompressible settings; in the elasto--plastic case, the stabilization additionally employs history variables obtained from the projected state. This places \cite{WriggersHudobivnik2017,Hudobivnik2019} within the same broad stabilization paradigm as finite-strain hyperelastic VEM approaches based on surrogate energies and internal sub-partitions: the stabilization remains an explicit energy assignment on the unresolved modes (i.e., on $\missingkernel$) and its practical behavior can reflect both the internal tessellation choice and the adopted parameter scaling strategy. 

\paragraph{}
These concerns are not merely conceptual. Stabilization parameters effectively assign energies, equivalently eigenvalues, to the modes not represented by the consistent contribution. Consequently, a mode-insensitive scalar scaling can misrepresent the missing-mode spectrum, particularly on distorted or anisotropic polygons where kernel modes are strongly directional. This eigenvalue perspective is developed in \cite{Fujimoto2024} and complements the robustness considerations emphasized in \cite{Mascotto2023}. In this sense, classical one-parameter stabilizations may fail to remain uniformly comparable to the target (tangent) energy on $\ker\Pi$ across polygon shapes and parameter regimes, which is precisely the stability criterion highlighted in \cite{Mascotto2023}.

\paragraph{}
The work in \cite{Fujimoto2024} makes this point explicit by reinterpreting VEM stabilization as an eigenvalue (energy) assignment mechanism. By analyzing the spectrum of the element stiffness split $\mathbf{K}=\mathbf{K}^{c}+\mathbf{K}^{s}$ for $k=1$, where $\mathbf{K}^{c}$ denotes the consistency contribution and $\mathbf{K}^{s}$ the stabilization contribution, the authors in \cite{Fujimoto2024} show that $\mathbf{K}^{c}$ reproduces the correct energies for constant-strain modes while leaving bending/hourglass-type modes with (near-)zero energy, and that common scalar choices of the stabilization parameter effectively select the stiffness of these missing modes, sometimes with poor scaling.

\paragraph{}
Taken together, these observations motivate a stabilization design that (i) remains strictly a kernel-control mechanism rather than a surrogate bulk-integration channel, (ii) preserves deviatoric/volumetric decoupling within the stabilization itself, and (iii) assigns directional, mode-aware stiffness to the unresolved modes so that the stabilization is spectrally equivalent to the element's current tangent energy on $\missingkernel$ across polygon shapes and as $\nu \to 1/2$, in the sense advocated in \cite{Mascotto2023} and guided by the modal interpretation in \cite{Fujimoto2024}.

\paragraph{}
An alternative line of research aims at eliminating the stabilization term altogether by modifying the projection machinery so that the discrete bilinear form is coercive using projected quantities only. The work in \cite{Berrone2024} proposes a stabilization-free virtual element formulation for the Poisson problem in which the key ingredient is a gradient projection onto divergence-free polynomial vector fields (equivalently, curls of suitable scalar polynomial spaces). The coercivity of the resulting discrete form is therefore shifted from a separate kernel-control bilinear form to the choice of the projection space itself: well-posedness follows once the projected-gradient operator is proven to be coercive on the virtual space, under standard mesh regularity assumptions and with a locally adapted enrichment degree. 

\paragraph{}
This idea is further developed in \cite{Berrone2025}, where a high-order stabilization-free scheme is constructed for general 2D second-order elliptic operators with variable coefficients. The work in \cite{Berrone2025} introduces a family of polynomial projection spaces for gradients and derives a necessary and sufficient condition ensuring stability of the projection and, consequently, well-posedness of the discrete problem. In particular, the degree of the projection space may depend on the element geometry and can be selected algorithmically, thereby preserving the VEM philosophy of operating with degrees of freedom and computable projections while avoiding an explicit stabilization term.

\paragraph{}
Beyond a priori theory, the absence of a stabilization bilinear form has also been exploited in the context of adaptivity. The paper \cite{Berrone2026} develops a residual a posteriori error analysis for stabilization-free VEM applied to the Poisson equation and shows that, with a suitable error measure, one can establish an equivalence with standard residual estimators that is typically obscured by stabilization contributions in classical VEM analyses. This highlights a structural advantage of stabilization-free formulations when the goal is a sharp estimator-efficiency theory. 

\paragraph{}
While the stabilization-free approach provides a principled route to avoid arbitrary kernel penalties in linear elliptic settings, it relies on projection spaces and coercivity mechanisms that are tightly coupled to the underlying operator and to the admissible polygon families. In the present work, the focus remains on finite-strain hyperelastic VEM, where the consistent contribution is built from polynomial projections and the unresolved modes in $\missingkernel$ require explicit control at the level of the current tangent energy. In this context, the above stabilization-free developments serve primarily as conceptual motivation: they confirm that the projection channel can be engineered to improve stability properties, but they do not remove the need to design a physically consistent, dev/vol-decoupled kernel-control mechanism for nonlinear elasticity.

\paragraph{}
The stabilization-free formulations in \cite{Berrone2024,Berrone2025,Berrone2026} demonstrate that, for linear elliptic operators, stability can be recovered by enriching and redesigning the projection channel so that coercivity holds without an explicit kernel-control term. This perspective is valuable in the present context because it confirms that the stability mechanism in VEM can be shifted from a separate stabilization bilinear form to suitably chosen computable projections. At the same time, the constructions in \cite{Berrone2024,Berrone2025} rely on operator-specific coercivity arguments and on projection spaces whose definition and degree may need to be adapted locally to geometry and coefficients, with stability ensured through conditions tailored to the linear setting.

\paragraph{}
In finite-strain hyperelasticity, the situation differs in two essential respects. First, the target form is the state-dependent tangent operator arising from the current deformation, so the stability requirement concerns spectral equivalence with the current tangent energy on $\missingkernel$ at each Newton iterate, rather than coercivity of a fixed linear operator. Second, the physically relevant structure is a deviatoric/volumetric split whose limiting behavior as $\nu \to 1/2$ must be respected at the discrete level. In this regime, enforcing stability solely through a modified projection would require a deformation- and material-dependent redesign of the projection spaces and of the associated coercivity proofs, effectively transferring to the projection channel the same burdens that stabilization is intended to handle: robustness across polygon shapes, uniform control in the nearly incompressible limit, and compatibility with consistent linearization. For these reasons, the present work retains an explicit stabilization term and focuses on its design as a pure kernel-control mechanism, constructed to be submesh-free, dev/vol-decoupled, and uniformly spectrally equivalent to the element's current tangent energy on $\missingkernel$.

\paragraph{}The main contributions of this paper are:
\begin{enumerate}
    \item \textbf{A spectral viewpoint for finite-strain VEM stabilization on $\missingkernel$.}
    The stabilization requirement is formulated in terms of spectral equivalence between the stabilization bilinear form and the element Newton tangent energy restricted to $\missingkernel$. This yields a basis-independent characterization in terms of generalized Rayleigh quotients and generalized eigenvalues, providing both a precise stability yardstick and a practical diagnostic for assessing how a stabilization assigns stiffness to unresolved deformation patterns.

    \item \textbf{A kernel-only, deviatoric/volumetric decoupled stabilization that is submesh-free and exactly linearizable.}
    A new stabilization is proposed that depends only on the projector residual at the degrees of freedom, and therefore vanishes on $\left[\mathbb{P}_1(E)\right]^2$ by construction. The stabilization is split into a deviatoric channel scaled solely by a shear measure $\mu_E$ and a volumetric channel scaled by an independent bulk measure $\kappa_E$, without allowing bulk-dependent proxies to inflate the deviatoric penalty. The construction avoids internal sub-triangulations and yields closed-form residuals and tangents, enabling consistent Newton linearization.

    \item \textbf{Mode-aware directional scaling for anisotropic polygonal elements.}
    Motivated by the interpretation of stabilization as an energy assignment on missing modes, the proposed deviatoric stabilization incorporates inexpensive geometry-driven principal directions and bounded anisotropy weights. This allows the stabilization to redistribute stiffness across unresolved directions in a controlled manner on anisotropic or distorted polygons, where scalar stabilization parameters are known to mis-scale kernel modes.

    \item \textbf{Uniform-in-$\nu$ scaling results for the deviatoric stabilization on $\missingkernel$.}
    Under standard polygon regularity assumptions, it is shown that the proposed deviatoric stabilization is bounded above and below by $\mu_E|\cdot|_{1,E}^2$ on $\missingkernel$, with constants independent of $h_E$ and independent of $\nu$. This provides a theory-first justification that the deviatoric stabilization remains shear-scaled in the nearly incompressible limit, and it clarifies how the volumetric channel may be capped or suppressed to avoid residual compressibility on $\missingkernel$.

    \item \textbf{Element-level and benchmark evidence isolating and resolving the volumetric-proxy mechanism.}
    A single-element isochoric kernel-mode diagnostic is introduced to test whether stabilization assigns bulk-driven stiffness to volume-preserving missing modes as $\nu\to 1/2$. The diagnostic shows that classical surrogate-based stabilizations scale isochoric kernel energies with an inflated effective shear parameter, whereas the proposed decoupled stabilization remains shear-scaled. Kernel-restricted modal spectra further confirm the intended kernel-only property, directional stiffness redistribution, and deviatoric/volumetric separation. Finally, Cook's membrane in the nearly incompressible regime demonstrates that the proposed stabilization yields a coherent refinement response across mesh families and mitigates locking-like behavior observed under classical scaling choices.
\end{enumerate}

\paragraph{}The paper is organized as follows. Section~\ref{sec:vem_finite_elasticity} introduces the finite-strain hyperelastic model, the lowest-order conforming virtual element space on polygonal meshes, and the VEM energy split into a computable consistency contribution and a stabilization contribution acting on $\missingkernel$. Section~\ref{sec:stabilization_scaling} reviews classical finite-strain stabilization constructions and introduces the proposed kernel-only deviatoric/volumetric decoupled stabilization, together with scaling results showing shear-consistent behavior on $\missingkernel$ and controlled volumetric contributions in the nearly incompressible regime. Section~\ref{sec:spectral_analysis} develops a spectral viewpoint for stabilization on $\missingkernel$, relating the canonical VEM stability inequality to generalized Rayleigh quotients and generalized eigenvalue bounds, thereby providing a basis-independent diagnostic for missing-mode energy assignment. Section~\ref{sec:numerical_experiments} reports numerical experiments, including element-level diagnostics (single-element isochoric kernel-mode tests and kernel-restricted modal spectra) and the Cook's membrane benchmark under near incompressibility, to assess robustness across element geometries and parameter regimes. Concluding remarks are given in Section~\ref{sec:conclusions}. For completeness, the functional-analytic notation and the main Sobolev and fractional Sobolev space definitions used in the analysis are summarized in Appendix~\ref{ap:functional_analysis}.

\section{The virtual element method applied to finite elasticity}
\label{sec:vem_finite_elasticity}

\paragraph{}This section summarizes the finite-strain virtual element formulation adopted in the present work and introduces the notation used in the subsequent stabilization analysis. The governing hyperelastic problem is first recalled in variational form, together with the state-dependent Newton linearization and the associated tangent bilinear form. The virtual element discretization is then constructed in the lowest-order conforming setting ($k=1$) on general polygonal meshes: the local space is specified through boundary degrees of freedom and harmonic extension, and the computable component of the method is obtained via polynomial projection operators.

\paragraph{}A central object is the projector $\projector$, which maps a virtual field to $\left[\mathbb{P}_1(E)\right]^2$ by prescribing its constant gradient. This induces the canonical decomposition of each local space into a polynomial image and a complementary kernel subspace. The consistency contribution to the discrete energy and tangent depends only on the projected component and therefore vanishes on the kernel. As a consequence, whenever $\dim(\missingkernel)>0$ (i.e., for polygonal elements with $N_E>3$), stabilization is required to control the missing modes while preserving polynomial consistency. The section concludes by stating the standard VEM stability requirement: the stabilization must vanish on polynomial fields and must be uniformly equivalent to the target tangent energy on $\missingkernel$. This requirement provides the reference scaling criterion for the stabilization designs examined in the subsequent sections.

\subsection{Governing equation for finite elasticity}

\paragraph{}Let $\Omega \subset \mathbb{R}^2$ denote the reference configuration of a hyperelastic body, with boundary $\partial\Omega$ decomposed into disjoint parts $\Gamma_D$ and $\Gamma_N$ such that $\partial\Omega=\overline{\Gamma_D}\cup\overline{\Gamma_N}$ and $\Gamma_D\cap\Gamma_N=\emptyset$. The unknown displacement field is $\mathbf{u}:\Omega\rightarrow \mathbb{R}^2$, and the deformation mapping is
\begin{equation}
    \bm{\varphi}(\mathbf{X}) = \mathbf{X} + \mathbf{u}(\mathbf{X}).
\end{equation}
The deformation gradient and the right Cauchy--Green tensor are defined by
\begin{equation}
    \mathbf{F}(\mathbf{u}) = \nabla \bm{\varphi} = \mathbf{I} + \nabla \mathbf{u},
    \qquad
    \mathbf{C}(\mathbf{u}) = \mathbf{F}(\mathbf{u})^{\mathsf{T}}\mathbf{F}(\mathbf{u}),
    \qquad
    J(\mathbf{u}) = \det(\mathbf{F}(\mathbf{u})).
\end{equation}

\paragraph{}A material is termed hyperelastic if there exists a strain-energy density $\psi=\psi(\mathbf{F})$ such that the first Piola--Kirchhoff stress tensor satisfies
\begin{equation}\label{eq:first_piola_kirchhoff}
    \mathbf{P}(\mathbf{F}) = \frac{\partial \psi}{\partial \mathbf{F}}(\mathbf{F}).
\end{equation}
Equivalently, by expressing $\psi$ as a function of $\mathbf{C}$, the second Piola--Kirchhoff stress tensor is
\begin{equation}\label{eq:second_piola_kirchhoff}
    \mathbf{S}(\mathbf{C}) = 2\,\frac{\partial \psi}{\partial \mathbf{C}}(\mathbf{C}),
    \qquad
    \mathbf{P}(\mathbf{F})=\mathbf{F}\mathbf{S}(\mathbf{C}).
\end{equation}
For isotropic materials, $\psi$ may be expressed in terms of invariants of $\mathbf{C}$ (and $J$). A compressible neo-Hookean model is adopted as a representative example,
\begin{equation}\label{eq:neo_hookean_energy}
    \psi(\mathbf{F}) = \frac{\mu}{2}\left(I_1 - 2\log(J) - 2\right) + \frac{\lambda}{2}\log(J)^2,
\end{equation}
where $I_1=\mathrm{tr}(\mathbf{C})$, and $\mu$ and $\lambda$ are the Lam\'e parameters,
\begin{equation}
    \mu = \frac{E_Y}{2(1+\nu)},
    \qquad
    \lambda = \frac{E_Y\,\nu}{(1+\nu)(1-2\nu)}.
\end{equation}
The additive constant in (\ref{eq:neo_hookean_energy}) is immaterial and may be chosen such that $\psi(\mathbf{I})=0$.

\paragraph{}The equilibrium equations in the reference configuration read
\begin{equation}\label{eq:strong_form_hyperelastic}
    -\nabla\cdot \mathbf{P}(\mathbf{F}(\mathbf{u})) = \rho^0\mathbf{b}
    \quad \text{in }\Omega,
    \qquad
    \mathbf{u}=\overline{\mathbf{u}}
    \quad \text{on }\Gamma_D,
    \qquad
    \mathbf{P}(\mathbf{F}(\mathbf{u}))\mathbf{n}=\overline{\mathbf{t}}
    \quad \text{on }\Gamma_N,
\end{equation}
where $\rho^0$ is the reference mass density, $\mathbf{b}$ is the prescribed body force per unit reference volume, $\overline{\mathbf{t}}$ is the prescribed traction per unit reference area, and $\mathbf{n}$ is the outward unit normal.

\paragraph{}The problem may be posed variationally by minimizing the total potential energy
\begin{equation}\label{eq:total_potential_energy}
    \Pi(\mathbf{u}) = \int_{\Omega} \psi(\mathbf{F}(\mathbf{u}))\, d\Omega
    - \int_{\Omega} \rho^0 \mathbf{b}\cdot \mathbf{u}\, d\Omega
    - \int_{\Gamma_N} \overline{\mathbf{t}}\cdot \mathbf{u}\, dS,
\end{equation}
over the admissible set $\mathcal{V}_{\overline{\mathbf{u}}}=\{\mathbf{w}\in[H^1(\Omega)]^2:\ \mathbf{w}=\overline{\mathbf{u}}\ \text{on }\Gamma_D\}$. The corresponding weak form is: find $\mathbf{u}\in \mathcal{V}_{\overline{\mathbf{u}}}$ such that, for all $\mathbf{v}\in \mathcal{V}_0=\{\mathbf{w}\in[H^1(\Omega)]^2:\ \mathbf{w}=\mathbf{0}\ \text{on }\Gamma_D\}$,
\begin{equation}\label{eq:weak_form_hyperelastic}
    \int_{\Omega} \mathbf{P}(\mathbf{F}(\mathbf{u})):\nabla \mathbf{v}\, d\Omega
    =
    \int_{\Omega} \rho^0 \mathbf{b}\cdot \mathbf{v}\, d\Omega
    + \int_{\Gamma_N} \overline{\mathbf{t}}\cdot \mathbf{v}\, dS.
\end{equation}

\paragraph{}For Newton-type methods, the linearization of (\ref{eq:weak_form_hyperelastic}) at a given displacement field $\mathbf{u}^*$ introduces the (state-dependent) bilinear form
\begin{equation}\label{eq:tangent_bilinear_form}
    a_{\Omega}(\mathbf{w},\mathbf{v}; \mathbf{u}^*) =
    \int_{\Omega} \mathbb{A}(\mathbf{F}(\mathbf{u}^*))\big[\nabla \mathbf{w}\big]:\nabla \mathbf{v}\, d\Omega,
\end{equation}
where $\mathbb{A}(\mathbf{F})=\partial \mathbf{P}/\partial \mathbf{F}(\mathbf{F})$ denotes the fourth-order material tangent, and $\mathbb{A}(\mathbf{F})[\mathbf{H}]$ is the second-order tensor obtained by applying $\mathbb{A}$ to $\mathbf{H}\in\mathbb{R}^{2\times 2}$. The associated residual functional is
\begin{equation}\label{eq:residual_functional}
    R(\mathbf{u};\mathbf{v}) =
    \int_{\Omega} \mathbf{P}(\mathbf{F}(\mathbf{u})):\nabla \mathbf{v}\, d\Omega
    - \int_{\Omega} \rho^0 \mathbf{b}\cdot \mathbf{v}\, d\Omega
    - \int_{\Gamma_N} \overline{\mathbf{t}}\cdot \mathbf{v}\, dS.
\end{equation}

\subsection{Basic concepts}
Let $\Omega \subset \mathbb{R}^2$ be a bounded Lipschitz polygonal domain with boundary $\partial \Omega$, and let $\{\mathcal{T}_h\}_h$ be a family of polygonal partitions of $\Omega$ into simple polygons $E$. Denote by $h_E:=\mathrm{diam}(E)$ the element diameter, by $|E|$ the element area, by $N_E$ the number of vertices of $E$, and by $e\subset\partial E$ a generic edge with length $|e|$. Let $h:=\max_{E\in\mathcal{T}_h} h_E$ be the discretization parameter. The following regularity assumptions are adopted throughout.
\begin{enumerate}\label{assumption:regularity}
    \item[(M1)] \emph{Uniform star-shapedness:} there exists $\rho_0\in(0,1]$, independent of $h$, such that every element $E\in\mathcal{T}_h$ is star-shaped with respect to a ball of radius $\rho_0 h_E$, i.e., there exists $\mathbf{x}_E\in E$ such that
    \[
        B(\mathbf{x}_E,\rho_0 h_E)\subset E.
    \]

    \item[(M2)] \emph{No vanishing edges:} there exists $c_e>0$, independent of $h$, such that every edge $e\subset\partial E$ satisfies $|e|\ge c_e\, h_E$.

    \item[(M3)] \emph{Uniformly bounded number of edges:} there exists $N_{\max}$ such that $N_E\le N_{\max}$ for all $E\in\mathcal{T}_h$.

    \item[(M4)] \emph{Area scaling:} as a consequence of (M1), there exist constants $c_A,C_A>0$, depending only on $\rho_0$, such that
    \[
        c_A\, h_E^2 \le |E| \le C_A\, h_E^2
        \qquad \forall E\in\mathcal{T}_h .
    \]
\end{enumerate}
Unless stated otherwise, quasi-uniformity is not assumed.

\paragraph{}For the lowest-order conforming formulation ($k=1$), the local virtual element space is defined by:
\begin{equation}\label{eq:virtual_element_space}
    V_{h,E} = \left\{ \mathbf{v} \in \left[ H^1(E)\right]^2:\ \mathbf{v}|_{\partial E}\in \left[C^0(\partial E)\right]^2,\ \mathbf{v}|_e \in \left[\mathbb{P}_1(e)\right]^2,\ \Delta \mathbf{v}=\mathbf{0}\ \text{in }E,\ \forall e\subset \partial E \right\}.
\end{equation}
Therefore, functions in $V_{h,E}$ are vector-valued harmonic functions in $E$ whose trace is piecewise linear along $\partial E$. In the case $k=1$, the degrees of freedom are the vertex values; consequently,
\begin{equation}
    \dim(V_{h,E}) = 2N_E,
\end{equation}
where $N_E$ is the number of vertices of $E$. Since $\left[H^1(\Omega)\right]^2$ is a Hilbert space, its standard inner product
\begin{equation}
    \langle \mathbf{u},\mathbf{v} \rangle_{H^1(\Omega)} = \int_{\Omega} \mathbf{u}\cdot \mathbf{v}\, d\Omega + \int_{\Omega} \nabla \mathbf{u}:\nabla \mathbf{v}\, d\Omega,
\end{equation}
for all $\mathbf{u}, \mathbf{v} \in \left[ H^1(\Omega)  \right]^2$, induces the norm
\begin{equation}
    \| \mathbf{u} \|_{H^1(\Omega)} = \left(\langle \mathbf{u},\mathbf{u} \rangle_{H^1(\Omega)}\right)^{1/2}.
\end{equation}
The global conforming space associated with $\mathcal{T}_h$ is defined by enforcing elementwise membership and homogeneous Dirichlet boundary conditions:
\begin{equation}
    V_h = \left\{ \mathbf{v} \in \left[H^1_0(\Omega)\right]^2:\ \mathbf{v}|_E \in V_{h,E}\ \ \forall E\in \mathcal{T}_h \right\}.
\end{equation}

\paragraph{}The projection operator $\projector$ maps virtual functions to the polynomial space $\left[\mathbb{P}_1(E)\right]^2$ by prescribing its (constant) gradient. Specifically,
\begin{equation}
    \projector: V_{h,E} \longrightarrow \left[\mathbb{P}_1(E)\right]^2,
\end{equation}
such that the gradient satisfies
\begin{equation}\label{eq:projection}
    \nabla\left(\projector \mathbf{u}_h\right) = \frac{1}{|E|} \int_{E} \nabla \mathbf{u}_h\, dE.
\end{equation}
In addition, a normalization condition is imposed to fix the translational part of $\projector \mathbf{u}_h$ (e.g., by matching the mean value at the vertices). This definition yields the decomposition
\begin{equation}
    \mathbf{u}_h = \projector \mathbf{u}_h + \left(\mathbf{u}_h - \projector \mathbf{u}_h\right),
\end{equation}
\begin{equation}\label{eq:disp_field_split}
    V_{h,E} = \projector V_{h,E} \oplus \ker(\projector),
\end{equation}
where 
\begin{equation}
  \ker(\projector)=\{\mathbf{v}\in V_{h,E}:\ \projector\mathbf{v}=\mathbf{0}\}
\end{equation}
denotes the kernel of the projector. Alternative (equivalent) projections are discussed in \cite{Ahmad2013}.

For $k=1$, the trace $\mathbf{u}_h|_{\partial E}$ is completely determined by the vertex degrees of freedom, since $\mathbf{u}_h$ is linear on each edge. Moreover, the projected gradient in (\ref{eq:projection}) can be computed using only boundary data. By the divergence theorem,
\begin{equation}\label{eq:grad_integral_projection}
    \nabla\left(\projector \mathbf{u}_h\right) = \frac{1}{|E|}\int_{\partial E} \mathbf{u}_h \otimes \mathbf{n}\, dS
    = \frac{1}{|E|} \sum_{i=1}^{N_E} \int_{e_i} \mathbf{u}_h \otimes \mathbf{n}_i\, dS,
\end{equation}
where $\mathbf{n}$ denotes the outward unit normal on $\partial E$, and $\mathbf{n}_i$ is its restriction to the edge $e_i$. Since $\mathbf{u}_h|_{e_i}\in \left[\mathbb{P}_1(e_i)\right]^2$, each edge integral in (\ref{eq:grad_integral_projection}) is exactly computable from the endpoint values.

\subsection{Virtual element approximation}

\paragraph{}The virtual element approximation of the finite-strain problem is formulated as a discrete counterpart of the weak form (\ref{eq:weak_form_hyperelastic}). Let $V_h$ be the conforming virtual element space defined previously, and define the subspace of admissible test functions
\begin{equation}
    V_{h,0} = \left\{ \mathbf{v}_h \in V_h:\ \mathbf{v}_h=\mathbf{0}\ \text{on }\Gamma_D\right\}.
\end{equation}
Moreover, let $\overline{\mathbf{u}}_h$ be a suitable virtual element approximation of the prescribed boundary data $\overline{\mathbf{u}}$ on $\Gamma_D$, and define the affine trial space $V_{h,\overline{\mathbf{u}}}=\overline{\mathbf{u}}_h+V_{h,0}$. The discrete problem is: find $\mathbf{u}_h\in V_{h,\overline{\mathbf{u}}}$ such that
\begin{equation}\label{eq:vem_discrete_weak_form}
    R_h(\mathbf{u}_h;\mathbf{v}_h)=0 \qquad \forall \mathbf{v}_h\in V_{h,0},
\end{equation}
where $R_h$ is a computable approximation of the continuous residual in (\ref{eq:residual_functional}).

\paragraph{}In the present setting, $R_h$ is obtained from a discrete total potential energy. Specifically, a discrete functional $\Pi_h:V_{h,\overline{\mathbf{u}}}\rightarrow \mathbb{R}$ is defined by an elementwise decomposition
\begin{equation}\label{eq:vem_discrete_energy}
    \Pi_h(\mathbf{u}_h) = \sum_{E\in\mathcal{T}_h} \Pi_h^E(\mathbf{u}_h)
    \qquad \text{with} \qquad
    \Pi_h^E(\mathbf{u}_h) = U_{h,E}^c(\mathbf{u}_h^E) + U_{h,E}^s(\mathbf{u}_h^E) - \ell_h^E(\mathbf{u}_h^E),
\end{equation}
where $U_{h,E}^c$ is the consistency contribution, $U_{h,E}^s$ is the stabilization contribution, and $\ell_h^E$ represents the external work. For reference, the exact element internal energy is denoted by
\begin{equation}\label{eq:exact_element_internal_energy}
    U_E(\mathbf{u}) = \int_E \psi\left(\mathbf{F}(\mathbf{u})\right)\, dE,
\end{equation}
which is generally not computable in closed form within the virtual element setting. The discrete residual is defined by the derivative,
\begin{equation}\label{eq:vem_discrete_residual_from_energy}
    R_h(\mathbf{u}_h;\mathbf{v}_h) = D\Pi_h(\mathbf{u}_h)[\mathbf{v}_h] \qquad \forall \mathbf{v}_h\in V_{h,0}.
\end{equation}

\paragraph{}The consistency term is computed by replacing the deformation gradient by its polynomial projection. For each element $E$, define the projected deformation gradient
\begin{equation}
    \mathbf{F}_E(\mathbf{u}_h) = \mathbf{I} + \nabla\left(\projector \mathbf{u}_h^E\right),
    \qquad \mathbf{u}_h^E := \mathbf{u}_h|_E,
\end{equation}
which is constant over $E$ by construction. The consistency energy is then given by
\begin{equation}\label{eq:vem_consistency_energy}
    U_{h,E}^c(\mathbf{u}_h^E) = \int_E \psi\left(\mathbf{F}_E(\mathbf{u}_h)\right)\, dE
    = |E|\,\psi\left(\mathbf{F}_E(\mathbf{u}_h)\right).
\end{equation}

\paragraph{}The stabilization term provides control on the kernel component $\mathbf{u}_h^E-\projector\mathbf{u}_h^E\in \ker(\projector)$, that is, on the part of the discrete field not represented by the polynomial projection. In the present framework, the element internal energy approximation is written as
\begin{equation}\label{eq:vem_element_energy_split}
    U_{h,E}(\mathbf{u}_h^E) = U_{h,E}^c(\mathbf{u}_h^E) + U_{h,E}^s(\mathbf{u}_h^E),
\end{equation}
where $U_{h,E}^c$ is the computable consistency contribution defined in (\ref{eq:vem_consistency_energy}) and $U_{h,E}^s$ is a stabilization contribution. The stabilization is required to satisfy two fundamental principles:
\begin{equation}\label{eq:vem_stabilization_principles}
    U_{h,E}^s(\mathbf{u}_h^E) = U_{h,E}^s\!\left(\mathbf{u}_h^E-\projector\mathbf{u}_h^E\right),
    \qquad
    U_{h,E}^s(\mathbf{p})=0 \ \ \forall \mathbf{p}\in \left[\mathbb{P}_1(E)\right]^2,
\end{equation}
namely, it acts only on $\ker(\projector)$ and vanishes on polynomial fields (polynomial consistency).

\paragraph{}At the level of Newton linearization, the discrete tangent operators are obtained from the second variation of the discrete energy. Let $\ustar\in V_{h,\overline{\mathbf{u}}}$ denote the current Newton state. For each element $E$, define the elementwise discrete bilinear forms associated with the consistency and stabilization energies by
\begin{equation}\label{eq:vem_element_discrete_bilinear_forms}
    a_{h,E}^c(\mathbf{v}_h,\mathbf{w}_h;\ustar)
    :=
    D^2 U_{h,E}^c(\ustar)[\mathbf{v}_h,\mathbf{w}_h],
    \qquad
    a_{h,E}^s(\mathbf{v}_h,\mathbf{w}_h;\ustar)
    :=
    D^2 U_{h,E}^s(\ustar)[\mathbf{v}_h,\mathbf{w}_h],
\end{equation}
for all $\mathbf{v}_h,\mathbf{w}_h\in V_{h,E}$. The corresponding element tangent bilinear form is the sum
\begin{equation}\label{eq:vem_element_discrete_tangent_split}
    a_{h,E}(\mathbf{v}_h,\mathbf{w}_h;\ustar)
    :=
    a_{h,E}^c(\mathbf{v}_h,\mathbf{w}_h;\ustar)
    +
    a_{h,E}^s\!\left(\mathbf{v}_h-\projector\mathbf{v}_h,\mathbf{w}_h-\projector\mathbf{w}_h;\ustar\right).
\end{equation}
In the notation introduced previously,
\begin{equation}
    \abilinear{E}{c}{\mathbf{v}_h}{\mathbf{w}_h}=a_{h,E}^c(\mathbf{v}_h,\mathbf{w}_h;\ustar),
    \qquad
    \abilinear{E}{s}{\mathbf{v}_h}{\mathbf{w}_h}=a_{h,E}^s\!\left(\mathbf{v}_h-\projector\mathbf{v}_h,\mathbf{w}_h-\projector\mathbf{w}_h;\ustar\right).
\end{equation}

\paragraph{}Let $\{\bm{\phi}_i\}_{i=1}^{2N_E}$ be a basis of $V_{h,E}$ associated with the vertex degrees of freedom. The element residual vector $\mathbf{R}_{h,E}(\ustar)\in \mathbb{R}^{2N_E}$ and the element stiffness matrix $\mathbf{K}_{h,E}(\ustar)\in \mathbb{R}^{2N_E\times 2N_E}$ are defined by
\begin{equation}\label{eq:vem_element_residual_and_stiffness}
    \big(\mathbf{R}_{h,E}(\ustar)\big)_i := D\Pi_h^E(\ustar)[\bm{\phi}_i],
    \qquad
    \big(\mathbf{K}_{h,E}(\ustar)\big)_{ij} := a_{h,E}(\bm{\phi}_i,\bm{\phi}_j;\ustar),
\end{equation}
so that $\mathbf{K}_{h,E}(\ustar)=\mathbf{K}_{h,E}^c(\ustar)+\mathbf{K}_{h,E}^s(\ustar)$ with the contributions induced by $a_{h,E}^c$ and $a_{h,E}^s$. The global residual vector and global stiffness matrix are obtained by standard assembly over the mesh. At each Newton step, the increment of the global degrees of freedom is computed from the linear system
\begin{equation}\label{eq:newton_linear_system}
    \mathbf{K}_h(\ustar)\,\Delta \mathbf{a} = -\mathbf{R}_h(\ustar).
\end{equation}

\paragraph{}For stability and robustness, the stabilization contribution must reproduce the correct scaling of the target (projected-state) tangent energy on the missing modes. Define the element tangent bilinear form
\begin{equation}\label{eq:vem_element_tangent_form}
    a_E(\mathbf{v},\mathbf{w};\ustar)
    =
    \int_E \mathbb{A}\left(\mathbf{F}_E(\ustar)\right)\big[\nabla \mathbf{v}\big]:\nabla \mathbf{w}\, dE,
\end{equation}
for sufficiently regular $\mathbf{v},\mathbf{w}$. The classical VEM stability requirement presented in \cite{beirao2013vem} is the existence of constants $C_0,C_1>0$, independent of $E$ and $h$, such that
\begin{equation}\label{eq:vem_stabilization_inequality}
    C_0\, a_E(\mathbf{v}_h,\mathbf{v}_h;\ustar)
    \le
    \abilinear{E}{s}{\mathbf{v}_h}{\mathbf{v}_h}
    \le
    C_1\, a_E(\mathbf{v}_h,\mathbf{v}_h;\ustar)
    \qquad
    \forall \mathbf{v}_h\in V_{h,E}\ \text{with}\ \projector \mathbf{v}_h=\mathbf{0}.
\end{equation}
This inequality guarantees that the stabilization is uniformly equivalent to the target tangent energy on $\ker(\projector)$, thereby ensuring robustness with respect to element shape and mesh size.

\paragraph{}In particular, (\ref{eq:vem_stabilization_principles}) implies that the stabilization bilinear form depends only on the kernel components:
\begin{equation}\label{eq:vem_stabilization_kernel_action}
    \abilinear{E}{s}{\mathbf{v}_h}{\mathbf{w}_h}
    =
    \abilinear{E}{s}{\mathbf{v}_h-\projector\mathbf{v}_h}{\mathbf{w}_h-\projector\mathbf{w}_h}.
\end{equation}

\paragraph{}The external work term is approximated in a computable manner. In particular, for $k=1$, the boundary integral on $\Gamma_N$ is computable since $\mathbf{u}_h$ is piecewise linear on each boundary edge. For body forces, a common choice is to replace $\mathbf{u}_h$ by a computable polynomial projection. For instance, a constant projection on $E$ may be defined by vertex averaging as
\begin{equation}
    \Pi^0_E \mathbf{v}_h = \frac{1}{N_E}\sum_{i=1}^{N_E}\mathbf{v}_h(\mathbf{x}_i)\in \left[\mathbb{P}_0(E)\right]^2,
\end{equation}
so that $\int_E \rho^0\mathbf{b}\cdot \Pi^0_E\mathbf{v}_h\, dE$ is computable. Accordingly, a convenient elementwise definition of the external work functional is
\begin{equation}\label{eq:vem_discrete_load_functional}
    \ell_h^E(\mathbf{v}_h) =
    \int_E \rho^0 \mathbf{b}\cdot \Pi^0_E \mathbf{v}_h\, dE
    + \int_{\partial E\cap \Gamma_N} \overline{\mathbf{t}}\cdot \mathbf{v}_h\, dS.
\end{equation}

\paragraph{}The linearization of (\ref{eq:vem_discrete_weak_form}) yields the discrete (state-dependent) bilinear form used in Newton-type methods. Let $\ustar\in V_{h,\overline{\mathbf{u}}}$ denote the current Newton state. The global tangent bilinear form is defined by
\begin{equation}\label{eq:vem_discrete_tangent_bilinear_form}
    a_{h}(\mathbf{v}_h,\mathbf{w}_h;\ustar)
    =
    DR_h(\ustar)[\mathbf{v}_h,\mathbf{w}_h]
    =
    D^2\Pi_h(\ustar)[\mathbf{v}_h,\mathbf{w}_h],
\end{equation}
for all $\mathbf{v}_h,\mathbf{w}_h\in V_{h,0}$. The elementwise decomposition (\ref{eq:vem_discrete_energy}) implies the splitting
\begin{equation}
    a_{h}(\mathbf{v}_h,\mathbf{w}_h;\ustar)
    =
    \sum_{E\in\mathcal{T}_h}\left(\abilinear{E}{c}{\mathbf{v}_h}{\mathbf{w}_h}+\abilinear{E}{s}{\mathbf{v}_h}{\mathbf{w}_h}\right),
\end{equation}
where $\abilinear{E}{c}{\cdot}{\cdot}$ and $\abilinear{E}{s}{\cdot}{\cdot}$ denote the elementwise contributions associated with the second variations of $U_{h,E}^c$ and $U_{h,E}^s$, respectively. In particular, the consistency contribution admits the explicit expression
\begin{equation}\label{eq:vem_consistency_bilinear_form}
    \abilinear{E}{c}{\mathbf{v}_h}{\mathbf{w}_h}
    =
    |E|\,
    \mathbb{A}\left(\mathbf{F}_E(\ustar)\right)\big[\nabla(\projector \mathbf{v}_h^E)\big]
    :\nabla(\projector \mathbf{w}_h^E),
\end{equation}
where $\mathbb{A}(\mathbf{F})=\partial\mathbf{P}/\partial\mathbf{F}(\mathbf{F})$ is the material tangent introduced in (\ref{eq:tangent_bilinear_form}).  

\subsection{A discussion about missing modes}

\paragraph{}In the virtual element method, the local space $V_{h,E}$ is not handled through explicit interior basis functions. Instead, it is specified by its degrees of freedom and by a set of computable polynomial projection operators. As a consequence, all contributions built solely from projected quantities depend only on the polynomial component of a virtual field. The complementary components, i.e.\ those lying in the kernel of the projector employed in the consistency construction, do not contribute to the consistent part and are referred to as missing modes.

\paragraph{}In the present low-order setting, the consistency term is constructed through the projection $\projector=\Pi_E^\nabla$, which maps $V_{h,E}$ onto $\left[\mathbb{P}_1(E)\right]^2$. For any $\mathbf{w}_h\in V_{h,E}$, define the projected component and the remainder by
\begin{equation}
    \mathbf{w}_h^{\mathrm{p}} := \projector \mathbf{w}_h \in \left[\mathbb{P}_1(E)\right]^2,
    \qquad
    \mathbf{w}_h^{\mathrm{r}} := \left(\mathbf{I}-\projector\right)\mathbf{w}_h \in V_{h,E}.
\end{equation}
Since $\projector$ is a projector, $(\projector)^2=\projector$, and therefore
\begin{equation}
    \projector \mathbf{w}_h^{\mathrm{r}}
    =
    \projector\left(\mathbf{I}-\projector\right)\mathbf{w}_h
    =
    \left(\projector-(\projector)^2\right)\mathbf{w}_h
    =
    \mathbf{0}.
\end{equation}
Hence $\mathbf{w}_h^{\mathrm{r}}\in\missingkernel$ and $\mathbf{w}_h=\mathbf{w}_h^{\mathrm{p}}+\mathbf{w}_h^{\mathrm{r}}$. In particular, the decomposition induced by $\projector$ yields
\begin{equation}
    V_{h,E} = \left[\mathbb{P}_1(E)\right]^2 \oplus \missingkernel,
\end{equation}
where $\missingkernel$ collects the components of $V_{h,E}$ that are not represented by the polynomial projection used in the consistency term.

\paragraph{}The existence of a nontrivial projector kernel follows from a dimension argument. For the standard lowest-order VEM for vector-valued displacements in two dimensions, the degrees of freedom are the displacement values at the $N_E$ vertices of $E$, so that $\dim(V_{h,E})=2N_E$. On the other hand, $\dim\!\left(\left[\mathbb{P}_1(E)\right]^2\right)=6$. Therefore,
\begin{equation}
    \dim(\missingkernel)=\dim(V_{h,E})-\dim\!\left(\left[\mathbb{P}_1(E)\right]^2\right)=2N_E-6.
\end{equation}
For triangular elements ($N_E=3$), $\dim(\missingkernel)=0$ and the projection accounts for the full space. For polygonal elements with $N_E>3$, $\dim(\missingkernel)>0$ and the kernel must be controlled to ensure well-posedness of the discrete problem at the element level.

\paragraph{}In low-order VEM, the consistent contribution is built exclusively from projected quantities. Schematically,
\begin{equation}
    \ahE[c] (\mathbf{u}_h, \mathbf{v}_h; \mathbf{u}^*) = a_E (\projector \mathbf{u}_h, \projector \mathbf{v}_h; \mathbf{u}^*).
\end{equation}
Let $\mathbf{w}_h \in \missingkernel$. Then $\projector \mathbf{w}_h = \mathbf{0}$, and for any $\mathbf{v}_h \in V_{h,E}$,
\begin{equation}
    \ahE[c] (\mathbf{w}_h, \mathbf{v}_h; \mathbf{u}^*)
    =
    a_E (\projector \mathbf{w}_h, \projector \mathbf{v}_h; \mathbf{u}^*)
    =
    a_E(\mathbf{0}, \projector \mathbf{v}_h; \mathbf{u}^*)
    =
    0.
\end{equation}
Consequently, every $\mathbf{w}_h \in \missingkernel$ produces zero contribution to the element energy and to the element tangent associated with the consistent part. Equivalently, $\missingkernel$ is contained in the nullspace of the consistent bilinear form.

\paragraph{}It follows that, whenever $\dim(\missingkernel)>0$, the consistent term alone cannot yield a coercive element contribution on $V_{h,E}$, and the resulting element stiffness operator is rank deficient. Stabilization is therefore required to supply control on $\missingkernel$ while preserving polynomial consistency.

\paragraph{}The standard requirement is that the stabilization vanishes on the polynomial subspace and is spectrally equivalent to the target bilinear form on the projector kernel. In particular, for all $\mathbf{w}_h\in\missingkernel$ the stability inequality (\ref{eq:vem_stabilization_inequality}) is required to hold. This condition expresses that the stabilization provides the correct scaling of the unresolved modes without altering the polynomially consistent response.

\section{Stabilization scaling for finite-strain}
\label{sec:stabilization_scaling}
\paragraph{}This section addresses the scaling of stabilization terms for finite-strain virtual element formulations. In the hyperelastic VEM split, the computable consistency contribution is built from polynomial projections, whereas the stabilization acts only on the unresolved subspace $\missingkernel=\ker(\projector)$. As discussed in Section~\ref{sec:spectral_analysis}, robustness requires the stabilization bilinear form to be spectrally equivalent to the target (state-dependent) Newton tangent energy on $\missingkernel$, with equivalence constants uniform over admissible polygon geometries and stable in the nearly incompressible limit. The purpose of this section is twofold: first, to summarize how classical finite-deformation VEM stabilizations realize this mechanism through surrogate energies evaluated on auxiliary sub-triangulations and through effective Lam\'e-type scalings; second, to introduce and analyze a kernel-only stabilization that decouples deviatoric and volumetric contributions, avoids internal tessellations, and admits mesh-robust scaling driven by shear and bulk measures without bulk-dependent modifications of the deviatoric channel.

\paragraph{}To this end, Section~\ref{sec:classic_stab} reviews the standard construction based on the identity (\ref{eq:classical_strain_energy_identity}) and on stabilization energies evaluated over an internal triangulation, together with commonly adopted bounded volumetric proxies and geometry factors used to regularize the near-incompressible regime and distorted polygonal elements. Section~\ref{sec:new_stab} then proposes a stabilization built directly from the projector residual at the degrees of freedom. The deviatoric part is scaled by a shear measure and enhanced by geometry-adapted anisotropy weights, while the volumetric part is defined as a boundary-only penalty on normal residual components with an explicit bulk scaling that can be capped or suppressed as $\nu\to 1/2$. The subsequent results establish the structural properties of the proposed terms and derive uniform bounds showing that the deviatoric stabilization scales like $\mu_E|\cdot|_{1,E}^2$ on $\missingkernel$ with constants independent of $h_E$ and of $\nu$, thereby providing the stability foundation required for the modal diagnostics and numerical assessments reported later.

\subsection{Classic formulation of stabilization term}
\label{sec:classic_stab}

Classical finite-deformation virtual element formulations augment the computable consistency contribution by a stabilization term acting on $\ker(\projector)$, as discussed, e.g., in \cite{Wriggers2017,vanHuyssteen2020,vanHuyssteen2021}. In the lowest-order setting, the projection $\projector$ defined by (\ref{eq:projection}) is constant over each element and determines a projected deformation gradient
\begin{equation}
    \mathbf{F}_E(\mathbf{u}_h)=\mathbf{I}+\nabla(\projector \mathbf{u}_h^E),
    \qquad \mathbf{u}_h^E:=\mathbf{u}_h|_E.
\end{equation}
Accordingly, the element internal energy approximation is written in the split form
\begin{equation}\label{eq:classical_element_energy_split}
    U_{h,E}(\mathbf{u}_h^E)=U_{h,E}^c(\mathbf{u}_h^E)+U_{h,E}^s(\mathbf{u}_h^E),
\end{equation}
where $U_{h,E}^c$ is the consistency contribution defined in (\ref{eq:vem_consistency_energy}), and $U_{h,E}^s$ supplies control on $\ker(\projector)$.

\paragraph{}Following \cite{Wriggers2017,vanHuyssteen2020}, the stabilization is constructed by introducing a modified strain-energy density $\hat{\psi}$ and employing the classical VEM identity
\begin{equation}\label{eq:classical_strain_energy_identity}
    \psi\!\left(\mathbf{F}(\mathbf{u}_h)\right)
    =
    \psi\!\left(\mathbf{F}_E(\mathbf{u}_h)\right)
    +\hat{\psi}\!\left(\mathbf{F}(\mathbf{u}_h)\right)
    -\hat{\psi}\!\left(\mathbf{F}_E(\mathbf{u}_h)\right),
\end{equation}
in which the first term is computable through the projection, whereas the correction is confined to the stabilization channel. A representative choice consistent with (\ref{eq:neo_hookean_energy}) is
\begin{equation}\label{eq:classical_modified_energy}
    \hat{\psi}(\mathbf{F}) = \frac{\hat{\mu}}{2}\left(I_1 - 2\log(J) - 2\right) + \frac{\hat{\lambda}}{2}\log(J)^2,
\end{equation}
where $I_1=\mathrm{tr}(\mathbf{F}^{\mathsf{T}}\mathbf{F})$, $J=\det(\mathbf{F})$, and $\hat{\mu}$ and $\hat{\lambda}$ are effective Lam\'e parameters used exclusively to scale the stabilization.

\paragraph{}Since $\mathbf{u}_h$ is only explicitly known on $\partial E$ for $k=1$, the stabilization part is evaluated by means of an auxiliary triangulation $\mathcal{T}_E$ of $E$ and a piecewise affine extension $\mathbf{u}_h^{\mathrm{tri}}$ matching the vertex degrees of freedom. A common choice is a fan triangulation from an internal point (e.g., the centroid), which introduces no additional degrees of freedom, as illustrated in Figure \ref{fig:triangular_decomposition}.
\begin{figure}[htb]
    \centering
    \includegraphics[width=9cm]{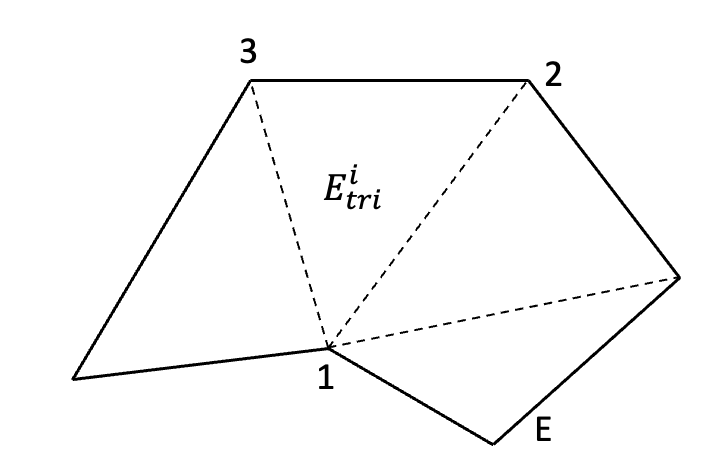}
    \caption{\label{fig:triangular_decomposition}Example of an auxiliary triangulation $\mathcal{T}_E$ used to evaluate the stabilization contribution without introducing additional degrees of freedom.}
\end{figure}
On each triangle $T\in\mathcal{T}_E$, the deformation gradient $\mathbf{F}_T(\mathbf{u}_h)$ is constant and defined by
\begin{equation}
    \mathbf{F}_T(\mathbf{u}_h)=\mathbf{I}+\nabla\!\left(\mathbf{u}_h^{\mathrm{tri}}|_T\right).
\end{equation}
The classical stabilization energy is then defined by
\begin{equation}\label{eq:classical_stabilization_energy}
    U_{h,E}^s(\mathbf{u}_h^E)
    =
    \sum_{T\in\mathcal{T}_E} |T|\left(\hat{\psi}\!\left(\mathbf{F}_T(\mathbf{u}_h)\right)-\hat{\psi}\!\left(\mathbf{F}_E(\mathbf{u}_h)\right)\right).
\end{equation}
By construction, $U_{h,E}^s$ depends only on the missing component $\mathbf{u}_h^E-\projector\mathbf{u}_h^E$ and vanishes when $\mathbf{u}_h^E\in [\mathbb{P}_1(E)]^2$.

\paragraph{}The element load potential is approximated as described in (\ref{eq:vem_discrete_load_functional}). In particular, for $k=1$, the Neumann contribution is computable since $\mathbf{u}_h$ is piecewise linear on boundary edges, whereas the body-force term is evaluated by replacing the virtual field with a computable constant projection on $E$.

\paragraph{}A representative strategy is the introduction of a bounded approximation of the volumetric Lam\'e parameter $\lambda=\lambda(\nu)$ through a truncated Taylor expansion \cite{vanHuyssteen2020}. For a fixed expansion point $\nu_0=-0.25$, a fifth-order approximation is defined by
\begin{equation}\label{eq:taylor_lambda}
    T_5(\lambda) = \lambda(\nu_0) + \sum_{k=1}^{5} \frac{\partial^k \lambda}{\partial \nu^k}\bigg|_{\nu_0}\frac{(\nu-\nu_0)^k}{k!}.
\end{equation}
Since $T_5(\lambda)$ remains bounded as $\nu \rightarrow 1/2$, the volumetric stabilization scaling based on $T_5(\lambda)$ does not diverge in the nearly incompressible limit.

\paragraph{}Element geometry effects are incorporated by means of a geometric factor that depends on an aspect ratio $\mathcal{R}$. Let $R_i$ and $R_o$ denote, respectively, the minor and major semi-axes of the minimal-area ellipse enclosing $E$, and define
\begin{equation}\label{eq:ellipse_aspect_ratio}
    \mathcal{R} = \frac{R_o}{R_i}.
\end{equation}
\begin{figure}[htb]
    \centering
    \includegraphics[width=10cm]{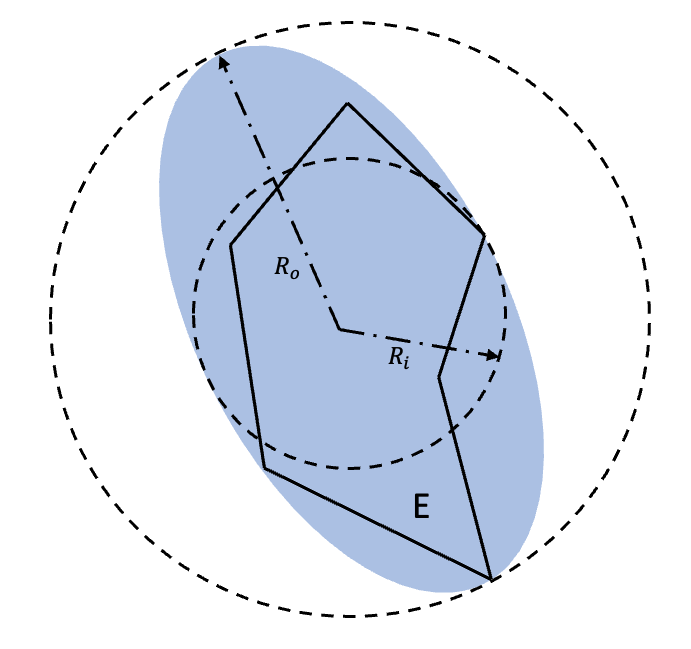}
    \caption{\label{fig:aspect_ratio}Schematic definition of an ellipse-based aspect ratio for a polygonal element.}
\end{figure}
Using $\mathcal{R}$, the geometric factor is defined as
\begin{equation}\label{eq:theta_geometric_factor}
    \Theta = \frac{2(1+\nu)}{\mathcal{R}},
\end{equation}
and the stabilization factor is set as
\begin{equation}\label{eq:phi_stabilization_factor}
    \Phi = \frac{\Theta}{\Theta+1}.
\end{equation}
The modified volumetric Lam\'e parameter is then given by
\begin{equation}\label{eq:lambda_hat_classical}
    \hat{\lambda} = \Phi\,T_5(\lambda).
\end{equation}

\paragraph{}In addition, a modified shear scaling is introduced through an incompressibility indicator
\begin{equation}\label{eq:alpha_incomp_classical}
    \alpha = \frac{T_5(\lambda)}{E_Y},
\end{equation}
leading to the effective shear Lam\'e parameter
\begin{equation}\label{eq:mu_hat_classical}
    \hat{\mu} = (1+\alpha)^2\,\Phi\,\mu.
\end{equation}
These effective parameters are subsequently employed to scale the stabilization channel, with the aim of mitigating locking effects while retaining robustness for distorted polygonal elements.

\subsection{Kernel-only deviatoric/volumetric decoupled stabilization term}
\label{sec:new_stab}

\paragraph{}Let $\{ \mathbf{x}_i \}_{i=1}^{N_E}$ be the collection of the $N_E$ vertices of polygon $E$ with area $|E|$ and polygonal diameter $h_E$. Let the discrete displacement unknowns be the vertex values $\mathbf{u}_i = \mathbf{u}_h(\mathbf{x}_i) \in \mathbb{R}^2$. Define the vertex residuals (kernel components) by:
\begin{equation}\label{eq:residual_kernel_component}
    \mathbf{r}_i (\mathbf{u}_h) = \mathbf{u}_h (\mathbf{x}_i) - \left( \projector \mathbf{u}_h \right)(\mathbf{x}_i) \in \mathbb{R}^2, \; i=1,..., N_E,
\end{equation}
and stack them into
\begin{equation}\label{eq:stacked_residual_kernel}
    \mathbf{r}(\mathbf{u}_h) = [\mathbf{r}_1; ...; \mathbf{r}_{N_E}] \in \mathbb{R}^{2N_E}.
\end{equation} 
This is the residual operator. By construction,
\begin{equation}
    \mathbf{r}(\mathbf{p}) = \mathbf{0}, \; \forall \mathbf{p} \in \left[ \mathbb{P}_1(E) \right]^2,
\end{equation}
so any stabilization depending only on $\mathbf{r}$ vanishes on the polynomial space and acts only on $\missingkernel$.

\paragraph{}Let $\mathbf{x}_C$ be the centroid of $E$. Define the geometry-only second moment matrix:
\begin{equation}
    \mathbf{M}_E = \sum \limits^{N_E}_{i=1} (\mathbf{x}_i - \mathbf{x}_C)(\mathbf{x}_i - \mathbf{x}_C)^T \in \mathbb{R}^{2\times 2}.
\end{equation}
Also, let $(\mathbf{q}_1, \mathbf{q}_2)$ be an orthonormal eigenbasis of $\mathbf{M}_E$ with eigenvalues $\xi_1 \geq \xi_2 > 0$, and let $\mathbf{Q}_E = [\mathbf{q}_1, \mathbf{q}_2] \in \mathbb{R}^{2\times 2}$. Define the aspect indicator as:
\begin{equation}
    \label{eq:aspect_indicator}
    r_E = \sqrt{\frac{\xi_1}{\xi_2}} \geq 1.
\end{equation}
Choose a bounded anisotropy map $g: [1, \infty) \longrightarrow (0,\infty)$:
\begin{equation}
    \label{eq:anisotropy_map}
    g(r) = \min \{ r^\beta, g_{\max} \}, \quad 0 < \beta \leq 1,\quad g_{\max} > 1,
\end{equation}
and define
\begin{equation}
    \tau_1 = g(r_E), \quad \tau_2 = \frac{1}{g(r_E)}.
\end{equation}
Rotate the residual into the principal frame:
\begin{equation}
    \hat{\mathbf{r}}_i = \mathbf{Q}_E^T \mathbf{r}_i = \left[\begin{array}{c}
        \hat{\mathbf{r}}_{i,1} \\
        \hat{\mathbf{r}}_{i,2}
    \end{array}\right].
\end{equation}

\paragraph{}Let $\mu_E > 0$ be a shear scale. For neo-Hookean materials, $\mu_E = \mu$ is adopted (no bulk-dependent proxy). The deviatoric stabilization form is defined as
\begin{equation}
    \label{eq:deviatoric_stab}
    \ahE[s][\text{dev}] (\mathbf{u}_h, \mathbf{v}_h) = \frac{\mu_E |E|}{h_E^2} \sum \limits^{N_E}_{i=1} \tau_1 \hat{\mathbf{r}}_{i,1}(\mathbf{u}_h)\hat{\mathbf{r}}_{i,1}(\mathbf{v}_h) + \tau_2 \hat{\mathbf{r}}_{i,2}(\mathbf{u}_h)\hat{\mathbf{r}}_{i,2}(\mathbf{v}_h).
\end{equation}
Equivalently, in matrix form:
\begin{equation}
    \label{eq:deviatoric_stab_matrix_form}
    \ahE[s][\text{dev}] (\mathbf{u}_h, \mathbf{v}_h) = \frac{\mu_E |E|}{h_E^2} \sum \limits^{N_E}_{i=1} \mathbf{r}_i(\mathbf{u}_h)^T (\mathbf{Q}_E \text{diag}(\tau_1 \tau_2) \mathbf{Q}_E^T) \mathbf{r}_i(\mathbf{v}_h),
\end{equation}
where
\begin{equation}
    \text{diag}(\tau_1 \tau_2) = \left[\begin{array}{cc}
        \tau_1 & 0 \\
        0 & \tau_2
    \end{array}\right].
\end{equation}

\paragraph{}Regarding the volumetric term, let $\kappa_E \geq 0$ be a bulk scale. In the nearly incompressible regime, the recommended choice is $\kappa_E = 0$ (or a cap independent of $\nu$) to avoid residual compressibility on $\missingkernel$. In the compressible regime, one may take $\kappa_E = \kappa$. For each boundary edge $\mathbf{e}=(\mathbf{x}_i, \mathbf{x}_{i+1}) \subset \partial E$, let $|\mathbf{e}|$ denote the length and $\mathbf{n}_e$ a fixed outward unit normal. Define the edge-averaged residual as:
\begin{equation}
    \mathbf{r}_e (\mathbf{u}_h) = \frac{\mathbf{r}_i(\mathbf{u}_h) + \mathbf{r}_{i+1}(\mathbf{u}_h)}{2}.
\end{equation}
The volumetric stabilization bilinear form is given by:
\begin{equation}
    \label{eq:volumetric_stab_matrix_form}
    \ahE[s][\text{vol}] (\mathbf{u}_h, \mathbf{v}_h) = \frac{\kappa_E}{h_E} \sum \limits_{e \subset \partial E} |\mathbf{e}| (\mathbf{r}_e (\mathbf{u}_h)\cdot \mathbf{n}_e)(\mathbf{r}_e (\mathbf{v}_h)\cdot \mathbf{n}_e).
\end{equation}
This penalizes spurious normal expansion or compression in the kernel component using only boundary information. Using (\ref{eq:deviatoric_stab}) and (\ref{eq:volumetric_stab_matrix_form}), the stabilization energy is given by:
\begin{equation}
    U_E^s(\mathbf{u}_h) = \frac{1}{2} \ahE[s][\text{dev}] (\mathbf{u}_h, \mathbf{v}_h) + \frac{1}{2} \ahE[s][\text{vol}] (\mathbf{u}_h, \mathbf{v}_h).
\end{equation}

\paragraph{}The following result establishes the basic structural properties of the proposed deviatoric/volumetric stabilization. Since the stabilization is constructed from the residual operator $\mathbf{r}(\mathbf{u}_h)=\mathbf{u}_h-(\projector\mathbf{u}_h)$ evaluated at the degrees of freedom, the first point verifies kernel consistency, namely that the stabilization vanishes on the polynomial image space and therefore acts exclusively on the unresolved subspace $\missingkernel=\ker(\projector)$. This property is essential for preserving the VEM consistency mechanism and for ensuring that patch-test modes are not altered by the stabilization channel. The second point confirms symmetry, which is required for the stabilization to be the Hessian of an underlying quadratic energy and, consequently, for the discrete Newton tangent to remain symmetric when the constitutive tangent is symmetric.

\paragraph{}The third point addresses coercivity on $\missingkernel$. In the spectral framework of Section~\ref{sec:spectral_analysis}, stability requires the stabilization bilinear form to be positive definite on $\missingkernel$ in order to prevent spurious zero-energy (hourglass) modes and to guarantee that the generalized Rayleigh quotient on $\missingkernel$ is well-defined. The theorem shows that, under an injectivity condition excluding nontrivial kernel functions with vanishing residual at all vertices (modulo admissibility constraints), the deviatoric term defines an SPD form on $\missingkernel$. In addition, the volumetric contribution is shown to be symmetric positive semidefinite for $\kappa_E\ge 0$, so that it can be added without compromising coercivity while remaining a separate, bulk-scaled channel, consistent with the intended deviatoric/volumetric decoupling.

\begin{theo}
    The deviatoric stabilization form in (\ref{eq:deviatoric_stab}) satisfies the following properties:
    \begin{enumerate}
        \item Kernel consistency: $\ahE[s][\text{dev}](\mathbf{p}, \mathbf{v}_h) = 0$ for all $\mathbf{p} \in [\mathbb{P}_1(E)]^2$ and all $\mathbf{v}_h \in V_{h,E}$. In particular, $\ahE[s][\text{dev}]$ depends only on the kernel components $(\text{I}- \projector)\mathbf{u}_h$, i.e. only on $\mathbf{r}$;
        \item Symmetry: $\ahE[s][\text{dev}](\mathbf{u}_h, \mathbf{v}_h) = \ahE[s][\text{dev}](\mathbf{v}_h, \mathbf{u}_h)$, for all $\mathbf{u}_h, \mathbf{v}_h \in V_{h,E}$;
        \item Positive definiteness on $\missingkernel$: assume that the vertex-evaluation map is injective on $\missingkernel$, that is,
        \begin{equation}
            \label{eq:missing_kernel_implication}
            \mathbf{w}_h \in \missingkernel \text{ and } \mathbf{w}_h(\mathbf{x}_i) = \mathbf{0} \text{ for all } i = 1,\ldots,N_E \;\Rightarrow\; \mathbf{w}_h = \mathbf{0}.
        \end{equation}
        Then $\ahE[s][\text{dev}]$ is symmetric positive definite on $\missingkernel$. Moreover, the volumetric term $\ahE[s][\text{vol}]$ (with $\kappa_E \geq 0$) is symmetric and positive semidefinite, so that $\ahE[s] = \ahE[s][\text{dev}] + \ahE[s][\text{vol}]$ is also symmetric positive definite on $\missingkernel$ under (\ref{eq:missing_kernel_implication}).
    \end{enumerate}
\end{theo}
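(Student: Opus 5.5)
The argument rests on writing the deviatoric form as a weighted Gram form of the residual vector. Introduce the $2\times2$ matrix $\mathbf{W}_E:=\mathbf{Q}_E\,\mathrm{diag}(\tau_1,\tau_2)\,\mathbf{Q}_E^T$ and the block-diagonal matrix $\mathbf{D}_E:=\mathbf{I}_{N_E}\otimes\mathbf{W}_E\in\mathbb{R}^{2N_E\times2N_E}$. By the matrix form (\ref{eq:deviatoric_stab_matrix_form}),
\begin{equation*}
  \ahE[s][\text{dev}](\mathbf{u}_h,\mathbf{v}_h)=\frac{\mu_E|E|}{h_E^2}\,\mathbf{r}(\mathbf{u}_h)^T\mathbf{D}_E\,\mathbf{r}(\mathbf{v}_h).
\end{equation*}
Three facts make this usable. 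First, $\mathbf{r}$ is linear in $\mathbf{u}_h$, because $\projector$ is linear and vertex evaluation is linear. Second, $\mathbf{W}_E$ is symmetric with eigenvalues $\tau_1=g(r_E)>0$ and $\tau_2=1/g(r_E)>0$, so it is SPD. Third, $\mathbf{D}_E$ is therefore SPD as well, and the prefactor $\mu_E|E|/h_E^2$ is strictly positive.

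Items 1 and 2 then follow directly.

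\textbf{Kernel consistency.} For $\mathbf{p}\in[\mathbb{P}_1(E)]^2$ we have $\projector\mathbf{p}=\mathbf{p}$. This uses the fact that the projection reproduces linears: the mean gradient of a linear function is its gradient, and the vertex-mean normalization then matches the constant part. Hence $\mathbf{r}(\mathbf{p})=\mathbf{0}$, which was already noted after (\ref{eq:stacked_residual_kernel}), and so $\ahE[s][\text{dev}](\mathbf{p},\mathbf{v}_h)=0$. Idempotence gives $\mathbf{r}(\projector\mathbf{u}_h)=\mathbf{0}$, so by linearity $\mathbf{r}(\mathbf{u}_h)=\mathbf{r}((\mathrm{I}-\projector)\mathbf{u}_h)$. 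This shows the form depends only on the kernel component.

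\textbf{Symmetry.} This is immediate from $\mathbf{D}_E^T=\mathbf{D}_E$.

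\textbf{Positive definiteness on $\missingkernel$.} Take $\mathbf{w}_h\in\missingkernel$. Then $\projector\mathbf{w}_h=\mathbf{0}$, so $\mathbf{r}_i(\mathbf{w}_h)=\mathbf{w}_h(\mathbf{x}_i)$. This gives
\begin{equation*}
  \ahE[s][\text{dev}](\mathbf{w}_h,\mathbf{w}_h)\ \ge\ \frac{\mu_E|E|}{h_E^2}\min\{\tau_1,\tau_2\}\sum_{i=1}^{N_E}|\mathbf{w}_h(\mathbf{x}_i)|^2\ \ge 0.
\end{equation*}
Suppose equality to zero holds. Then every vertex value vanishes, and the injectivity hypothesis (\ref{eq:missing_kernel_implication}) forces $\mathbf{w}_h=\mathbf{0}$, so the form is SPD on $\missingkernel$.

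This step is the only delicate point, because the quadratic form controls only vertex values and we must pass back to the function. In $V_{h,E}$ itself the hypothesis is automatic. The vertex values determine the piecewise-linear trace on $\partial E$, and the harmonic extension of a zero trace is zero. So I would also remark that (\ref{eq:missing_kernel_implication}) always holds for the space (\ref{eq:virtual_element_space}), and keep it as a stated assumption only to cover modified or constrained spaces.

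\textbf{Volumetric term.} Linearity of $\mathbf{r}_e$ and the symmetric product $(\mathbf{r}_e(\mathbf{u}_h)\cdot\mathbf{n}_e)(\mathbf{r}_e(\mathbf{v}_h)\cdot\mathbf{n}_e)$ give symmetry. For positive semidefiniteness, note that
\begin{equation*}
  \ahE[s][\text{vol}](\mathbf{w}_h,\mathbf{w}_h)=\frac{\kappa_E}{h_E}\sum_{e\subset\partial E}|e|\,(\mathbf{r}_e(\mathbf{w}_h)\cdot\mathbf{n}_e)^2\ \ge 0
\end{equation*}
whenever $\kappa_E\ge0$. It is only semidefinite: for example, residuals tangential to every edge, or $\kappa_E=0$, give zero. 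Finally, the sum $\ahE[s]=\ahE[s][\text{dev}]+\ahE[s][\text{vol}]$ is symmetric, and for $\mathbf{w}_h\in\missingkernel\setminus\{\mathbf{0}\}$ we have $\ahE[s](\mathbf{w}_h,\mathbf{w}_h)\ge\ahE[s][\text{dev}](\mathbf{w}_h,\mathbf{w}_h)>0$. Hence the full stabilization is SPD on $\missingkernel$.
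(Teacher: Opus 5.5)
Your proof is correct and follows the paper's argument essentially step for step. Reproduction of $[\mathbb{P}_1(E)]^2$ by the projector kills the residual, symmetry of $\mathbf{W}_E$ gives symmetry, and positive definiteness of $\mathbf{W}_E$ combined with the injectivity hypothesis gives definiteness on the kernel, with the volumetric term handled as a nonnegative weighted sum of squares. Your additions are all valid and slightly sharpen what the paper leaves as an assumption: you justify $\Pi^\nabla_E\mathbf{p}=\mathbf{p}$ from the gradient-mean and vertex-mean definition, you give an explicit argument that the form depends only on $(\mathrm{I}-\Pi^\nabla_E)\mathbf{u}_h$, and you observe that the injectivity hypothesis holds automatically in $V_{h,E}$ by uniqueness of the harmonic extension of a zero piecewise-linear trace.
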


\begin{proof}
    Let $\mathbf{p} \in [\mathbb{P}_1(E)]^2$. Since $\projector$ is a projector onto $[\mathbb{P}_1(E)]^2$,
    \begin{equation}
        \projector \mathbf{p} = \mathbf{p}.
    \end{equation}
    Hence, for every vertex $\mathbf{x}_i$:
    \begin{equation}
        \mathbf{r}_i (\mathbf{p}) = \mathbf{p}(\mathbf{x}_i) - \left( \projector \mathbf{p} \right)(\mathbf{x}_i) = \mathbf{0}.
    \end{equation}
    Define
    \begin{equation}
        \label{eq:support_we_matrix}
        \mathbf{W}_E = \mathbf{Q}_E \text{diag}(\tau_1 \tau_2) \mathbf{Q}_E^T.
    \end{equation}
    Therefore, 
    \begin{equation}
        \ahE[s][\text{dev}](\mathbf{p}, \mathbf{v}_h) = \frac{\mu_E |E|}{h_E^2} \sum \limits^{N_E}_{i=1} \mathbf{r}_i(\mathbf{p})^T \mathbf{W}_E \mathbf{r}_i(\mathbf{v}_h) = 0.
    \end{equation}

    \paragraph{}Since $\mathbf{W}_E$ is symmetric and the scalar products commute:
    \begin{equation}
        \mathbf{r}_i(\mathbf{u}_h)^T \mathbf{W}_E \mathbf{r}_i(\mathbf{v}_h) = \mathbf{r}_i(\mathbf{v}_h)^T \mathbf{W}_E \mathbf{r}_i(\mathbf{u}_h).
    \end{equation}
    Hence, summing over $i$:
    \begin{equation}
        \ahE[s][\text{dev}](\mathbf{u}_h, \mathbf{v}_h) = \ahE[s][\text{dev}](\mathbf{v}_h, \mathbf{u}_h).
    \end{equation}

    \paragraph{} By the definition of $\missingkernel$,
    \begin{equation}
        \mathbf{w}_h \in \missingkernel \Rightarrow \projector \mathbf{w}_h = \mathbf{0},
    \end{equation}
    so that at every vertex
    \begin{equation}
        \mathbf{r}_i(\mathbf{w}_h) = \mathbf{w}_h(\mathbf{x}_i) - (\projector \mathbf{w}_h)(\mathbf{x}_i) = \mathbf{w}_h(\mathbf{x}_i),
        \qquad i = 1,\ldots,N_E.
    \end{equation}
    Moreover, since $\tau_1,\tau_2 > 0$ and $\mathbf{Q}_E$ is orthonormal, the matrix $\mathbf{W}_E$ is symmetric positive definite on $\mathbb{R}^2$. Hence, for any nonzero vector $\mathbf{a} \in \mathbb{R}^2$,
    \begin{equation}
        \mathbf{a}^T\mathbf{W}_E \mathbf{a} > 0.
    \end{equation}

    \paragraph{}Then,
    \begin{equation}
        \ahE[s][\text{dev}](\mathbf{w}_h, \mathbf{w}_h) = \frac{\mu_E |E|}{h_E^2} \sum \limits^{N_E}_{i=1} \mathbf{r}_i(\mathbf{w}_h)^T \mathbf{W}_E \mathbf{r}_i(\mathbf{w}_h) \geq 0,
    \end{equation}
    and equality holds if and only if $\mathbf{r}_i(\mathbf{w}_h) = \mathbf{0}$ for all $i = 1,\ldots,N_E$, that is,
    \begin{equation}
        \label{eq:aux_injective_assumption_2}
        \ahE[s][\text{dev}](\mathbf{w}_h, \mathbf{w}_h) = 0 \;\Rightarrow\; \mathbf{w}_h(\mathbf{x}_i) = \mathbf{0}, \quad i = 1,\ldots,N_E.
    \end{equation}
    Combining (\ref{eq:aux_injective_assumption_2}) with the injectivity assumption (\ref{eq:missing_kernel_implication}) yields $\mathbf{w}_h = \mathbf{0}$, contradicting $\mathbf{w}_h \neq \mathbf{0}$. Therefore,
    \begin{equation}
        \label{eq:aux_injective_assumption_1}
        \ahE[s][\text{dev}](\mathbf{w}_h, \mathbf{w}_h) > 0
        \qquad \forall \mathbf{w}_h \in \missingkernel \setminus \{\mathbf{0}\},
    \end{equation}
    which proves positive definiteness on $\missingkernel$.

    \paragraph{}Finally, $\ahE[s][\text{vol}]$ is a sum of squared edge-normal residual components with prefactor $\kappa_E / h_E \ge 0$, so it is symmetric and positive semidefinite.
\end{proof}

\paragraph{}The next lemma provides a boundary counterpart of the discrete-to-continuous norm equivalences used later in the stability analysis. Since the proposed volumetric stabilization is built from edge-wise quantities involving vertex (or edge-averaged) residuals on $\partial E$, it is necessary to relate such discrete edge-difference measures to an intrinsic boundary seminorm that is stable under polygonal shape variations. The lemma establishes that, for functions that are affine on each edge, the scaled Gagliardo seminorm on $\partial E$ is equivalent to a weighted sum of squared nodal jumps along consecutive vertices. The constants in this equivalence depend only on the mesh regularity parameters $(\rho_0,c_e,N_{\max})$, and are therefore uniform with respect to $h_E$ and independent of material parameters.

\paragraph{}In the present context, this estimate is used as a technical tool to control boundary contributions of kernel components by computable edge-based expressions. In particular, it enables replacing boundary integrals by vertex-difference terms when deriving bounds for stabilization energies and when proving that the proposed edge-based penalties scale like $H^1$-type quantities on $\missingkernel$. Although stated for scalar functions, the same estimate applies to vector-valued functions by applying the inequality to each component and summing the resulting bounds.

\begin{lemma} \label{lemma:difference_inequality}
    Let $g \in C^0(\partial E)$ be affine on each edge such that $g|_{e_i} \in \mathbb{P}_1(e_i)$, with $e_i \in \partial E$ an edge of the polygon $E$. Denote nodal values by $g_i = g(\mathbf{x}_i)$. Define the intrinsic Gagliardo seminorm:
    \begin{equation}
        |g|^2_{\fracsobolev} = \int \limits_{\partial E} \int \limits_{\partial E} \frac{|g(\mathbf{p}) - g(\mathbf{q})|^2}{|\mathbf{p}-\mathbf{q}|^2} ds_\mathbf{p} ds_\mathbf{q}.
    \end{equation}
    Also, define the scaled seminorm:
    \begin{equation}
        |g|^2_{\fracsobolev, h_E} = \frac{1}{h_E} |g|^2_{\fracsobolev}.
    \end{equation}
    Then, there exists constants $C_2, C_3 > 0$, depending only on $\rho_0$, $c_e$ and $N_{\max}$ such that
    \begin{equation}
        \label{eq:difference_inequality}
        C_2 \sum \limits^{N_E}_{i=1} \frac{1}{|e_i|} |g_{i+1} - g_i|^2 \leq |g|^2_{\fracsobolev, h_E} \leq C_3  \sum \limits^{N_E}_{i=1} \frac{1}{|e_i|} |g_{i+1} - g_i|^2.
    \end{equation}
    Equivalently (multiplying by $h_E$),
    \begin{equation}
        \label{eq:difference_inequality_2}
        |g|^2_{\fracsobolev}  \simeq \sum \limits^{N_E}_{i=1} \frac{1}{|e_i|} |g_{i+1} - g_i|^2.
    \end{equation}
\end{lemma}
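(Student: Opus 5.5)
The plan is to prove the scale-consistent form (\ref{eq:difference_inequality}) directly, by splitting the double integral over $\partial E\times\partial E$ according to which pair of edges the points $\mathbf{p},\mathbf{q}$ lie on. The key observation is that on a single edge the Gagliardo integrand of an affine function is constant. I also note a dimensional issue with (\ref{eq:difference_inequality_2}). The seminorm $|g|^2_{\fracsobolev}$ is dilation invariant, whereas $\sum_i |e_i|^{-1}|g_{i+1}-g_i|^2$ scales like $h_E^{-1}$. So the "equivalent" statement should be read as $|g|^2_{\fracsobolev}\simeq h_E\sum_i |e_i|^{-1}|g_{i+1}-g_i|^2\simeq \sum_i |g_{i+1}-g_i|^2$, where the last step uses (M2) together with $|e_i|\le h_E$. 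This is the form I would actually prove.

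\emph{Lower bound.} Discard all off-diagonal edge pairs, since the integrand is nonnegative, so that $|g|^2_{\fracsobolev}\ge\sum_i\int_{e_i}\int_{e_i}$. Parametrize $e_i$ by arclength, so $g(\mathbf{p})-g(\mathbf{q})=(s-t)(g_{i+1}-g_i)/|e_i|$ and $|\mathbf{p}-\mathbf{q}|=|s-t|$. The integrand is then the constant $|g_{i+1}-g_i|^2/|e_i|^2$, and the integral equals $|g_{i+1}-g_i|^2$ exactly. Combining this with $|e_i|\ge c_e h_E$ from (M2) gives
\[
\sum_i |e_i|^{-1}|g_{i+1}-g_i|^2\le (c_eh_E)^{-1}|g|^2_{\fracsobolev},
\]
that is, $C_2=c_e$.

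\emph{Upper bound.} The diagonal blocks contribute exactly $\sum_i|g_{i+1}-g_i|^2\le h_E\sum_i|e_i|^{-1}|g_{i+1}-g_i|^2$.

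For \emph{adjacent} edges $e_{i-1},e_i$ meeting at $\mathbf{x}_i$, let $s,t$ be the distances of $\mathbf{p},\mathbf{q}$ from $\mathbf{x}_i$. I would use that the interior angle at $\mathbf{x}_i$ is bounded away from $0$ and $2\pi$ by a constant depending on $\rho_0$. This follows from (M1), since the star-shapedness ball of radius $\rho_0h_E$ prevents needle-like corners. It gives $|\mathbf{p}-\mathbf{q}|\ge c(\rho_0)(s+t)$. Together with
\[
|g(\mathbf{p})-g(\mathbf{q})|\le s\,|g_i-g_{i-1}|/|e_{i-1}|+t\,|g_{i+1}-g_i|/|e_i|,
\]
the integrand is bounded by $C\big(|g_i-g_{i-1}|^2/|e_{i-1}|^2+|g_{i+1}-g_i|^2/|e_i|^2\big)$. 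Integrating over $e_{i-1}\times e_i$ yields $C\big(|g_i-g_{i-1}|^2+|g_{i+1}-g_i|^2\big)$.

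For \emph{non-adjacent} edges I would bound the numerator crudely. Walking along the boundary gives $|g(\mathbf{p})-g(\mathbf{q})|\le\sum_j|g_{j+1}-g_j|$, so by Cauchy--Schwarz $|g(\mathbf{p})-g(\mathbf{q})|^2\le N_{\max}\sum_j|g_{j+1}-g_j|^2$ using (M3). For the denominator I need a separation bound $\mathrm{dist}(e_i,e_k)\ge c(\rho_0,c_e)h_E$ for non-adjacent edges. Granting it, each such block contributes at most $C h_E^2/(c h_E)^2\cdot N_{\max}\sum_j|g_{j+1}-g_j|^2$. There are at most $N_{\max}^2$ blocks. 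Summing all contributions and using $|g_{j+1}-g_j|^2\le h_E|e_j|^{-1}|g_{j+1}-g_j|^2$ gives the upper bound with $C_3$ depending only on $\rho_0,c_e,N_{\max}$.

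\emph{Main obstacle.} The hard step is the geometric separation of non-adjacent edges, together with the angle bound at vertices, from (M1)--(M2). My first attempt is direct: every boundary point is visible from $B(\mathbf{x}_E,\rho_0h_E)$, and edges have length $\ge c_eh_E$; this should prevent two non-adjacent edges, or two sides of a thin corner, from approaching each other closer than a fixed fraction of $h_E$. If a clean direct argument proves elusive, I would fall back on compactness. After rescaling to $h_E=1$ and translating, the admissible vertex configurations with $N_E\le N_{\max}$ satisfying (M1)--(M2) form a compact set of simple polygons, since the closed conditions exclude self-touching. On each such polygon both sides of the inequality are seminorms on $\mathbb{R}^{N_E}$ with the same kernel, the constants. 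The optimal equivalence constants are then positive and depend continuously on the geometry, so they are uniformly bounded on the compact set.
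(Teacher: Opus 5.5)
Your argument is correct. You are also right that (\ref{eq:difference_inequality_2}) as printed is off by a factor $h_E$. The uniform, scale-invariant statement is $|g|^2_{\fracsobolev}\simeq h_E\sum_i|e_i|^{-1}|g_{i+1}-g_i|^2\simeq\sum_i|g_{i+1}-g_i|^2$, which is also what the paper's proof actually reduces to.

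Your lower bound is the same as the paper's: keep the diagonal blocks, where the integrand is constant, then apply (M2).

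For the upper bound the two routes differ.
\begin{itemize}
\item \textbf{The paper's route} is global. It writes $g(\mathbf p)-g(\mathbf q)$ as the integral of $\partial_t g$ along the shorter boundary arc, then applies Cauchy--Schwarz and a chord--arc inequality $d_{\partial E}(\mathbf p,\mathbf q)\le C_{\mathrm{ca}}|\mathbf p-\mathbf q|$. Fubini then reduces everything to a uniform bound on the kernel $\mathcal J(\bm{\xi})$.
\item Done correctly, this gives a stronger result: $|g|^2_{\fracsobolev}\le C(\rho_0)\,h_E\int_{\partial E}|\partial_t g|^2\,ds$ for any $H^1(\partial E)$ trace. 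The upper constant then need not depend on $c_e$ or $N_{\max}$, since $\int_{\partial E}|\partial_t g|^2ds=\sum_i|e_i|^{-1}|g_{i+1}-g_i|^2$ exactly.
\item However, the paper's execution is flawed. It bounds the inner integral over $S_{\bm{\xi}}(\mathbf q)$ uniformly in $\mathbf q$ by $\int_{\partial E}|\mathbf p-\mathbf q|^{-1}ds_{\mathbf p}$. That integral diverges logarithmically for $\mathbf q\in\partial E$, and a principal value does not help with a nonnegative integrand.
\item The fix is to integrate in $\mathbf p,\mathbf q$ jointly. Use $|\mathbf p-\mathbf q|\ge (u+v)/C_{\mathrm{ca}}$, where $u,v$ are the arc distances of $\mathbf q,\mathbf p$ from $\bm{\xi}$; this gives $\mathcal J(\bm{\xi})\le C_{\mathrm{ca}}|\partial E|$.
\item \textbf{Your edge-pair decomposition} never meets this singular integral. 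The singularity appears only in the diagonal blocks, which you compute exactly, and in the adjacent blocks, which the vertex-angle bound controls. The far-field blocks are handled crudely using piecewise linearity and $N_E\le N_{\max}$.
\item Your route is more elementary and, as written, more reliable. The price is an upper constant depending on all of $\rho_0,c_e,N_{\max}$, which the lemma permits.
\end{itemize}

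The geometric input you flag is the same one the paper simply asserts, and your direct idea closes it without compactness.
\begin{itemize}
\item \textbf{Chord--arc bound.} Since $B(\mathbf x_E,\rho_0h_E)$ lies in the kernel of $E$, it lies in the inner half-plane of every edge line. Hence $\partial E$ is a radial graph about $\mathbf x_E$ with $\rho_0h_E\le r\le h_E$. Along an edge, $ds/d\theta=r^2/d\le h_E/\rho_0$, where $d\ge\rho_0h_E$ is the distance from $\mathbf x_E$ to that edge line. Also $|\mathbf p-\mathbf q|\ge 2\rho_0h_E\sin(|\Delta\theta|/2)$. Together these give $C_{\mathrm{ca}}=\pi/(2\rho_0^2)$.
\item \textbf{Separation of non-adjacent edges.} Any boundary arc joining two non-adjacent edges contains a whole edge. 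So (M2) gives $\mathrm{dist}(e_i,e_k)\ge c_eh_E/C_{\mathrm{ca}}$.
\item \textbf{Vertex angles.} At a vertex, the two inner half-planes meet in a wedge of opening $\gamma_i=\min(\alpha_i,2\pi-\alpha_i)$. This wedge must contain the ball, whose centre is within $h_E$ of the apex, so $\sin(\gamma_i/2)\ge\rho_0$. This covers convex and reflex corners alike and gives $|\mathbf p-\mathbf q|\ge\rho_0(s+t)$ on adjacent blocks.
\end{itemize}
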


\begin{proof}
    See Appendix \ref{ap:proofs}.
\end{proof}

\paragraph{}The next result provides a discrete Poincaré--Korn type equivalence on the projector kernel that is central to the scaling analysis of the proposed stabilization. The deviatoric stabilization in (\ref{eq:deviatoric_stab}) is expressed in terms of vertex residuals and therefore naturally yields a vertex-based quadratic form on $\missingkernel$. In order to compare this discrete quantity to the target tangent energy, which is controlled by $H^1$-type seminorms of the displacement field, it is necessary to relate sums of squared nodal values to the $H^1$ seminorm on the element. Theorem~\ref{theo:poincare_korn} provides precisely this bridge: on $\missingkernel$, the vertex-based norm scaled by $h_E^{-2}$ is equivalent to $|\cdot|_{1,E}^2$ with constants depending only on the admissible polygon regularity parameters.

\paragraph{}The restriction to $\missingkernel$ is essential. While a bound of this type cannot hold uniformly on the full space due to the presence of rigid-body and polynomial modes, those modes are eliminated by the kernel constraint $\projector\mathbf{w}=\mathbf{0}$, which enforces a vanishing polynomial content without implying $\nabla\mathbf{w}=\mathbf{0}$. Consequently, kernel functions may carry nontrivial strains while still being orthogonal to the polynomial space, and the theorem ensures that their amplitude at the degrees of freedom is controlled by, and controls, their $H^1$ seminorm. This equivalence is used subsequently to show that the vertex-based deviatoric stabilization scales like $\mu_E|\mathbf{w}|_{1,E}^2$ on $\missingkernel$, yielding mesh-robust constants and supporting the spectral-equivalence viewpoint developed in Section~\ref{sec:spectral_analysis}.

\begin{theo}\label{theo:poincare_korn}
    Under the regularity assumptions (M1)--(M4) (see Assumption~\ref{assumption:regularity}), there exist constants $C_4, C_5>0$ depending only on $\rho_0$, $c_e$ and $N_{\max}$ such that
    \begin{equation}
        \label{eq:poincare_korn}
        C_4 |\mathbf{w}|_{1,E}^2 \leq \frac{1}{h_E^2} \sum \limits_{i=1}^{N_E} |\mathbf{w}(\mathbf{x}_i)|^2 \leq C_5 |\mathbf{w}|_{1,E}^2,
    \end{equation}
    for all $\mathbf{w} \in \missingkernel$.
\end{theo}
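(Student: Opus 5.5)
The plan is to prove the two inequalities separately, using two facts about $\mathbf{w}\in\missingkernel$. First, $\mathbf{w}$ is componentwise harmonic with piecewise affine trace. Second, the normalization of $\projector$ (matching vertex means) combined with $\projector\mathbf{w}=\mathbf{0}$ gives
\begin{equation*}
    \bar{\mathbf{w}} := \frac{1}{N_E}\sum_{i=1}^{N_E}\mathbf{w}(\mathbf{x}_i) = \mathbf{0}.
\end{equation*}
Everything is done componentwise and summed at the end. One caveat must be resolved before the constants are tracked. In two dimensions, both $|\mathbf{w}|_{1,E}^2$ and $\sum_i|\mathbf{w}(\mathbf{x}_i)|^2$ are invariant under the dilation $\mathbf{x}\mapsto h_E\mathbf{x}$. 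The factor $h_E^{-2}$ in (\ref{eq:poincare_korn}) is therefore only consistent after rescaling $E$ to unit diameter, or equivalently when it is absorbed into $\mu_E|E|/h_E^2\simeq\mu_E$ through (M4). I would first prove the scale-free equivalence $\sum_i|\mathbf{w}(\mathbf{x}_i)|^2\simeq|\mathbf{w}|_{1,E}^2$ on the rescaled element $\hat E=E/h_E$, with constants depending only on $(\rho_0,c_e,N_{\max})$, and then state how (\ref{eq:poincare_korn}) is to be read in that normalization.

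\textbf{Upper bound for $|\mathbf{w}|_{1,E}^2$.} Since $\mathbf{w}$ is harmonic, it minimizes the Dirichlet energy among all $H^1$ functions with the same trace. I would compare it with the piecewise affine extension on the fan triangulation from the center $\mathbf{x}_E$ of (M1), assigning the value $\bar{\mathbf{w}}=\mathbf{0}$ at $\mathbf{x}_E$. By (M1)--(M2), every fan triangle has diameter $\lesssim h_E$, an edge $\gtrsim c_eh_E$, and height $\gtrsim\rho_0h_E$, so its area is $\gtrsim h_E^2$. The gradient on each triangle is therefore bounded by $h_E^{-1}$ times nodal differences. Multiplying by the triangle area $\lesssim h_E^2$ and summing over at most $N_{\max}$ triangles gives $|\mathbf{w}|_{1,E}^2\le C\sum_i|\mathbf{w}(\mathbf{x}_i)|^2$.

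\textbf{Lower bound.} I would proceed in three steps.
\begin{enumerate}
    \item Since $\bar{\mathbf{w}}=\mathbf{0}$, a discrete Poincaré inequality on the cycle of $N_E\le N_{\max}$ vertices gives $\sum_i|\mathbf{w}(\mathbf{x}_i)|^2\le C(N_{\max})\sum_i|\mathbf{w}(\mathbf{x}_{i+1})-\mathbf{w}(\mathbf{x}_i)|^2$.
    \item Using $|e_i|\le h_E$, the right-hand side is bounded by $C h_E\sum_i|e_i|^{-1}|\mathbf{w}(\mathbf{x}_{i+1})-\mathbf{w}(\mathbf{x}_i)|^2$. By Lemma~\ref{lemma:difference_inequality}, this is at most $C\,h_E\cdot h_E^{-1}|\mathbf{w}|^2_{\fracsobolev}=C|\mathbf{w}|^2_{\fracsobolev}$.
    \item Finally, I would invoke the trace inequality $|\mathbf{w}|_{\fracsobolev}\le C_{\mathrm{tr}}|\mathbf{w}|_{1,E}$.
\end{enumerate}

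The main obstacle is step 3, specifically the uniformity of $C_{\mathrm{tr}}$ over the admissible polygon family. I would first rescale to $h_E=1$. By (M1), $\hat E$ is then star-shaped with respect to a ball of radius $\rho_0$, so its boundary is a Lipschitz graph in polar coordinates centered at $\mathbf{x}_E$, with Lipschitz constant depending only on $\rho_0$. The radial bi-Lipschitz map onto the unit disk then has constants depending only on $\rho_0$, and the disk trace inequality for the $H^{1/2}$ seminorm transfers to $\hat E$ with a uniform constant. A fallback is a compactness argument over the set of admissible polygons with at most $N_{\max}$ vertices satisfying (M1)--(M2). That set is compact in vertex coordinates, and both sides of the inequality are continuous, positive-definite quadratic forms on the $2N_E$-dimensional space of vertex data with zero vertex mean, which yields a uniform constant. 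Combining the two bounds and undoing the rescaling gives the equivalence with constants depending only on $\rho_0$, $c_e$ and $N_{\max}$.
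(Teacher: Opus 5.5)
Your proposal is correct, and your scaling objection is right. Dilations map $\ker(\Pi^\nabla_E)$ to itself. In two dimensions both $|\mathbf{w}|_{1,E}^2$ and $\sum_i|\mathbf{w}(\mathbf{x}_i)|^2$ are dilation invariant. So if you scale a fixed polygon with $N_E\ge 4$ by $h_E\to 0$, the right-hand inequality of the printed statement fails. The true statement is $\sum_i|\mathbf{w}(\mathbf{x}_i)|^2\simeq|\mathbf{w}|_{1,E}^2$. This is also what Theorem~\ref{theo:deviatoric_stab_condition} actually uses, since its prefactor is $|E|/h_E^2\simeq 1$ by (M4). The paper's appendix reaches the printed form only by losing two factors of $h_E$:
\begin{itemize}
\item it uses the ``equivalently'' version of Lemma~\ref{lemma:difference_inequality}, $|g|^2_{H^{1/2}(\partial E)}\simeq\sum_i|e_i|^{-1}|g_{i+1}-g_i|^2$, which is off by a factor $h_E$ from the lemma's main inequality;
\item it later replaces $\frac{C}{h_E}\sum_e|e|^{-1}|\cdot|^2$ by $C'\sum_e|e|^{-1}|\cdot|^2$ ``using $|e|\simeq h_E$''.
\end{itemize}

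For $\sum_i|\mathbf{w}(\mathbf{x}_i)|^2\lesssim|\mathbf{w}|_{1,E}^2$, you follow the same skeleton as the paper (nodal values, edge differences, $H^{1/2}(\partial E)$, $H^1(E)$), but each link is sounder.
\begin{itemize}
\item You pass from values to differences with the discrete Poincar\'e inequality on the $N_E$-cycle, using the vertex-mean normalization directly. The paper instead invokes a continuous bound $\|\mathbf{w}\|_{0,E}\le Ch_E|\mathbf{w}|_{1,E}$ and then asserts a nodal ``discrete version'' without justification.
\item You need only the lower half of Lemma~\ref{lemma:difference_inequality}, plus a trace seminorm bound valid for every $H^1$ function. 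Keeping the diagonal edge terms already gives $|g|^2_{H^{1/2}(\partial E)}\ge\sum_i|g_{i+1}-g_i|^2$. The paper instead relies on the two-sided harmonic-extension equivalence.
\end{itemize}

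For $|\mathbf{w}|_{1,E}^2\lesssim\sum_i|\mathbf{w}(\mathbf{x}_i)|^2$, your route is genuinely different, and it is the one that works. The paper applies the fundamental theorem of calculus and Cauchy--Schwarz on each edge. That bounds nodal differences \emph{from above} by $|\mathbf{w}|_{1,E}$, so it cannot yield this direction. Its intermediate bound $\int_e|\partial_t\gamma\mathbf{w}|^2\le C|\mathbf{w}|_{1,E}^2$ is also mis-scaled by $h_E^{-1}$. Your Dirichlet-principle comparison with the fan interpolant from $\mathbf{x}_E$ needs only (M1)--(M2), and it does not use the kernel constraint at all. The required height bound, that the distance from $\mathbf{x}_E$ to the line containing each edge is at least $\rho_0h_E$, follows from star-shapedness with respect to the ball.

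Minor points:
\begin{itemize}
\item The zero-vertex-mean space has dimension $2N_E-2$, not $2N_E$.
\item The compactness fallback would also need continuity of the harmonic Dirichlet energy in the vertex positions. The radial bi-Lipschitz argument is therefore the one to write out.
\item Step 1 depends on the vertex-mean normalization of $\Pi^\nabla_E$, which is the one the paper names. A different normalization would need a modified first step.
\end{itemize}
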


\begin{proof}
    See Appendix \ref{ap:theo_poincare_korn}.
\end{proof}

\paragraph{}The following theorem establishes the central stability property of the proposed deviatoric stabilization on the projector kernel. In the finite-strain setting, and in view of the spectral framework of Section~\ref{sec:spectral_analysis}, the stabilization is required to scale like the target tangent energy on $\missingkernel$ with constants that are uniform with respect to admissible polygon shapes and that do not deteriorate in the nearly incompressible limit. Since the deviatoric stabilization is defined through vertex residuals weighted in a geometry-adapted principal frame, the relevant question is whether this vertex-based quadratic form is equivalent to an $H^1$-type seminorm on $\missingkernel$ with robust constants. Theorem~\ref{theo:deviatoric_stab_condition} provides this equivalence.

\paragraph{}More precisely, under the boundedness of the anisotropy weights induced by the map $g$ and under the mesh regularity assumptions (M1)--(M4), the theorem shows that $\ahE[s][\mathrm{dev}]$ defines a coercive and bounded bilinear form on $\missingkernel$ that scales with the shear measure $\mu_E$ and the $H^1$ seminorm on the element. The constants $C_0^{\mathrm{dev}}$ and $C_1^{\mathrm{dev}}$ depend only on geometric regularity parameters and on the prescribed bounds for $(\tau_1,\tau_2)$, and are independent of $h_E$ and of the Poisson ratio. This result supplies the theoretical justification for using $\ahE[s][\mathrm{dev}]$ as a kernel-only stabilization channel that does not inject bulk-dependent scaling into the deviatoric response, thereby supporting robustness on anisotropic polygons and preventing artificial over-penalization of isochoric kernel modes in the nearly incompressible regime.

\begin{theo}\label{theo:deviatoric_stab_condition}
    Assume the anisotropy weights satisfy:
    \begin{equation}
        0 < \tau_{\min} \leq \tau_1, \tau_2 \leq \tau_{\max} < \infty,
    \end{equation}
    with $\tau_{\min}, \tau_{\max}$ depending only on the bounded map $g$. Define the matrix $\mathbf{W}_E$ as in (\ref{eq:support_we_matrix}):
    \begin{equation}
        \mathbf{W}_E = \mathbf{Q}_E \text{diag}(\tau_1 \tau_2) \mathbf{Q}_E^T, \: \mathbf{Q}_E^T \mathbf{Q}_E = \mathbf{I}.
    \end{equation}
    Define the stabilization term as in (\ref{eq:deviatoric_stab_matrix_form}). Then, there exists constants $C_0^{\text{dev}}, C_1^{\text{dev}} > 0$, depending only on $\rho_0$ and $\tau_{\min}, \tau_{\max}$ (hence, independent of $h_E$ and of $\nu$), such that for every $\mathbf{w}_h \in \missingkernel$:
    \begin{equation}
        C_0^{\text{dev}} \mu_E | \mathbf{w}_h |_{1,E}^2 \leq \ahE[s][dev] (\mathbf{w}_h, \mathbf{w}_h) \leq C_1^{\text{dev}} \mu_E | \mathbf{w}_h |_{1,E}^2.
    \end{equation}
\end{theo}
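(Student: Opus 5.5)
The plan is to reduce the statement to Theorem~\ref{theo:poincare_korn} in three moves: a pointwise spectral sandwich for the weight matrix $\mathbf{W}_E$, the identification of vertex residuals with vertex values on $\missingkernel$, and the area scaling (M4).

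\textbf{Step 1: sandwich the weight matrix.} Since $\mathbf{W}_E=\mathbf{Q}_E\,\mathrm{diag}(\tau_1,\tau_2)\,\mathbf{Q}_E^T$ with $\mathbf{Q}_E$ orthogonal, its eigenvalues are exactly $\tau_1$ and $\tau_2$. By the Rayleigh quotient bound for symmetric matrices, every $\mathbf{a}\in\mathbb{R}^2$ satisfies
\begin{equation*}
\tau_{\min}|\mathbf{a}|^2\le \mathbf{a}^T\mathbf{W}_E\mathbf{a}\le \tau_{\max}|\mathbf{a}|^2 .
\end{equation*}
For the map $g$ in (\ref{eq:anisotropy_map}) one may take $\tau_{\min}=1/g_{\max}$ and $\tau_{\max}=g_{\max}$, because $1\le g(r)\le g_{\max}$.

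\textbf{Step 2: identify the residuals and sum over vertices.} Let $\mathbf{w}_h\in\missingkernel$. Then $\projector\mathbf{w}_h=\mathbf{0}$, so $\mathbf{r}_i(\mathbf{w}_h)=\mathbf{w}_h(\mathbf{x}_i)$, exactly as in the proof of the previous theorem. Summing the bound from Step 1 over the vertices gives
\begin{equation*}
\tau_{\min}\frac{\mu_E|E|}{h_E^2}\sum_{i=1}^{N_E}|\mathbf{w}_h(\mathbf{x}_i)|^2\le \ahE[s][dev](\mathbf{w}_h,\mathbf{w}_h)\le \tau_{\max}\frac{\mu_E|E|}{h_E^2}\sum_{i=1}^{N_E}|\mathbf{w}_h(\mathbf{x}_i)|^2 .
\end{equation*}

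\textbf{Step 3: combine with Theorem~\ref{theo:poincare_korn} and (M4).} Theorem~\ref{theo:poincare_korn} states
\begin{equation*}
C_4|\mathbf{w}_h|_{1,E}^2\le h_E^{-2}\sum_{i=1}^{N_E}|\mathbf{w}_h(\mathbf{x}_i)|^2\le C_5|\mathbf{w}_h|_{1,E}^2 .
\end{equation*}
The prefactor $|E|/h_E^2$ is trapped between $c_A$ and $C_A$ by (M4), so $\mu_E|E|h_E^{-2}\sum_i|\mathbf{w}_h(\mathbf{x}_i)|^2$ is equivalent to $\mu_E|\mathbf{w}_h|_{1,E}^2$, with $h_E$ cancelling. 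This yields the claim with
\begin{equation*}
C_0^{\mathrm{dev}}=\tau_{\min}c_AC_4,\qquad C_1^{\mathrm{dev}}=\tau_{\max}C_AC_5 .
\end{equation*}
These constants depend only on the geometric regularity parameters and the bounds of $g$. They cannot depend on $\nu$, because $\mu_E$ appears only as a common multiplicative factor on both sides and no bulk quantity enters $\ahE[s][dev]$.

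\textbf{Main obstacle: the dependence of the constants.} The statement attributes the constants to $\rho_0$ alone, but $C_4$ and $C_5$ from Theorem~\ref{theo:poincare_korn} also depend on $c_e$ and $N_{\max}$. I will note that the constants depend on the full set $(\rho_0,c_e,N_{\max},\tau_{\min},\tau_{\max})$, which is still independent of $h_E$ and $\nu$. All the genuinely analytic content, namely the equivalence between the discrete vertex norm and the $H^1$ seminorm on the kernel, is already in Theorem~\ref{theo:poincare_korn}. Once that result is granted, the remaining argument is purely algebraic.
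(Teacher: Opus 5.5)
\textbf{There is a genuine gap in Step~3: the $h_E$ does not cancel.} Your three moves are the same as the paper's; the paper compresses your Step~3 into the bare assertion (\ref{eq:aux_intermediary_stab_ineq}). The prefactor $\mu_E|E|/h_E^2$ contains a single $h_E^{-2}$, and you spend it twice. You use it once to get $|E|/h_E^2\in[c_A,C_A]$, and again to produce the weight $h_E^{-2}$ that Theorem~\ref{theo:poincare_korn} requires. Combining your Step~2 correctly with (M4) and Theorem~\ref{theo:poincare_korn} gives
\[
\tau_{\min}c_AC_4\,\mu_E\,h_E^2\,|\mathbf{w}_h|_{1,E}^2\le a_{h,E}^{s,\mathrm{dev}}(\mathbf{w}_h,\mathbf{w}_h)\le\tau_{\max}C_AC_5\,\mu_E\,h_E^2\,|\mathbf{w}_h|_{1,E}^2 ,
\]
so your constants carry a factor $h_E^2$.

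This cannot be fixed by rearranging, because Theorem~\ref{theo:poincare_korn} as printed has the wrong scaling. Consider the dilation $E\mapsto sE$, $\mathbf{w}\mapsto\mathbf{w}(\cdot/s)$. It maps $\ker(\Pi^\nabla_E)$ onto $\ker(\Pi^\nabla_{sE})$ and leaves the $H^1$ seminorm unchanged in two dimensions. It multiplies $h_E^{-2}\sum_i|\mathbf{w}(\mathbf{x}_i)|^2$ by $s^{-2}$. Hence, for $N_E\ge 4$, no $h_E$-independent $C_4,C_5$ can exist. The statement you are proving is dilation invariant: $|E|/h_E^2$, $\mathbf{W}_E$ and the vertex values are all unchanged. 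So is the intermediate inequality (\ref{eq:aux_intermediary_stab_ineq}) on which the paper's proof rests. That inequality is true, but it does not follow from Theorem~\ref{theo:poincare_korn}. The paper's one-line appeal to that theorem hides exactly the mismatch your explicit constants expose.

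\textbf{What to use instead.} Replace Theorem~\ref{theo:poincare_korn} by the unweighted equivalence $\hat c\,|\mathbf{w}|_{1,E}^2\le\sum_i|\mathbf{w}(\mathbf{x}_i)|^2\le\hat C\,|\mathbf{w}|_{1,E}^2$ on \missingkernel, proved directly in three steps.

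First, since $\mathbf{w}$ is harmonic, trace and extension estimates on uniformly star-shaped polygons give $|\mathbf{w}|_{1,E}\simeq|\mathbf{w}|_{H^{1/2}(\partial E)}$; both sides are scale invariant.

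Second, multiply (\ref{eq:difference_inequality}) by $h_E$ and use $c_eh_E\le|e_i|\le h_E$. This gives $|\mathbf{w}|^2_{H^{1/2}(\partial E)}\simeq\sum_i|\mathbf{w}(\mathbf{x}_{i+1})-\mathbf{w}(\mathbf{x}_i)|^2$. This is what the appendix argument actually computes, since each diagonal edge block equals $|g(\mathbf{b})-g(\mathbf{a})|^2$ exactly. Note that the printed (\ref{eq:difference_inequality_2}) also drops this factor $h_E$.

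Third, the vertex-mean normalization of \projector forces $\sum_i\mathbf{w}(\mathbf{x}_i)=\mathbf{0}$ on the kernel. A discrete Poincar\'e inequality on the closed vertex cycle then gives $\sum_i|\mathbf{w}(\mathbf{x}_i)|^2\le C(N_{\max})\sum_i|\mathbf{w}(\mathbf{x}_{i+1})-\mathbf{w}(\mathbf{x}_i)|^2$. The reverse inequality holds with constant $4$.

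With this equivalence, your Steps~1--3 go through with $C_0^{\mathrm{dev}}=\tau_{\min}c_A\hat c$ and $C_1^{\mathrm{dev}}=\tau_{\max}C_A\hat C$. These depend on $(\rho_0,c_e,N_{\max},\tau_{\min},\tau_{\max})$, so your remark that the statement understates the dependence of the constants is correct.
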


\begin{proof}
    If $\mathbf{w}_h \in \missingkernel$, then $\projector \mathbf{w}_h = \mathbf{0}$, and hence:
    \begin{equation}
        \mathbf{r}_i (\mathbf{w}_h) = \mathbf{w}_h (\mathbf{x}_i).
    \end{equation}
    Since $\mathbf{W}_E$ is SPD with eigenvalues $\tau_1$ and $\tau_2$, for all $\mathbf{a} \in \mathbb{R}^2$:
    \begin{equation}
        \label{eq:aux_stab_with_a}
        \tau_{\min} |\mathbf{a}|^2 \leq \mathbf{a}^T \mathbf{W}_E \mathbf{a} \leq \tau_{\max} |\mathbf{a}|^2. 
    \end{equation}
    By applying (\ref{eq:aux_stab_with_a}) with $\mathbf{a} = \mathbf{w}_h (\mathbf{x}_i)$ and summing over vertices:
    \begin{equation}
        \label{eq:aux_stab_with_a_expanded}
        \frac{\mu_E |E|}{h_E^2} \tau_{\min} \sum \limits^{N_E}_{i=1} |\mathbf{w}_h(\mathbf{x}_i)|^2\leq \ahE[s][\text{dev}] (\mathbf{w}_h, \mathbf{w}_h) \leq  \frac{\mu_E |E|}{h_E^2} \tau_{\max} \sum \limits^{N_E}_{i=1} |\mathbf{w}_h(\mathbf{x}_i)|^2
    \end{equation}
    So, it remains to relate the vertex based quantity $\frac{\mu_E |E|}{h_E^2} \tau_{\min} \sum \limits^{N_E}_{i=1} |\mathbf{w}_h(\mathbf{x}_i)|^2$ to $|\mathbf{w}_h|_{1,E}^2$ on $\missingkernel$.

    \paragraph{}Applying Lemma~\ref{lemma:difference_inequality} and Theorem~\ref{theo:poincare_korn}, and using the area $|E|$ under the same regularity assumptions, there exist constants $\overline{C}_0, \overline{C}_1 > 0$ such that
    \begin{equation}
        \label{eq:aux_intermediary_stab_ineq}
        \overline{C}_0 |\mathbf{w}_h|^2_{1,E} \leq \frac{|E|}{h_E^2} \sum \limits^{N_E}_{i=1} |\mathbf{w}_h(\mathbf{x}_i)|^2 \leq \overline{C}_1 |\mathbf{w}_h|^2_{1,E}.
    \end{equation}
    Combining (\ref{eq:aux_stab_with_a_expanded}) and (\ref{eq:aux_intermediary_stab_ineq}):
    \begin{equation}
        \mu_E \tau_{\min}  \overline{C}_0 |\mathbf{w}_h|^2_{1,E} \leq \ahE[s][\text{dev}] (\mathbf{w}_h, \mathbf{w}_h) \leq \mu_E \tau_{\max}  \overline{C}_1 |\mathbf{w}_h|^2_{1,E}.
    \end{equation}
    Setting
    \begin{equation}
        C_0^{\text{dev}} = \tau_{\min}  \overline{C}_0 \: \text{and} \: C_1^{\text{dev}} = \tau_{\max}  \overline{C}_1
    \end{equation}
    the theorem is proved.
\end{proof}

\paragraph{}The corollary complements Theorem~\ref{theo:deviatoric_stab_condition} by characterizing the basic stability properties of the volumetric kernel penalty. The form $\ahE[s][\mathrm{vol}]$ is constructed as a boundary-only quadratic functional of the normal component of the kernel residual, and therefore it is automatically nonnegative and symmetric for $\kappa_E\ge 0$. This guarantees that the volumetric channel can be added to the deviatoric stabilization without compromising symmetry or coercivity on $\missingkernel$, while keeping the bulk scaling confined to the volumetric part of the stabilization design.

\paragraph{}In addition, a trace estimate yields a bound of $\ahE[s][\mathrm{vol}]$ in terms of bulk norms of $\mathbf{w}_h$, showing that the volumetric penalty does not grow faster than $H^1$-type quantities on admissible polygon families. In the nearly incompressible regime, this observation supports selecting $\kappa_E$ independently of $\nu$ (including the choice $\kappa_E=0$) to avoid injecting a bulk-dependent penalty on $\missingkernel$, while still retaining a controlled volumetric contribution when desired.

\begin{corollary}\label{cor:vol_stab_upper_bound}
    Assume $\kappa_E \ge 0$. Then the volumetric stabilization
    \begin{equation}
        \ahE[s][\mathrm{vol}] (\mathbf{w}_h, \mathbf{w}_h)
        = \frac{\kappa_E}{h_E}\sum_{e \subset \partial E} |e|\,
        \big(\mathbf{r}_e(\mathbf{w}_h)\cdot \mathbf{n}_e\big)^2
    \end{equation}
    is symmetric and positive semidefinite. Moreover, there exists a constant $C^{\mathrm{vol}}>0$, depending only on the mesh-regularity parameters (in particular on $\rho_0$, and on $c_e$, $N_{\max}$ if these are used in the trace constant), such that for all $\mathbf{w}_h \in V_{h,E}$,
    \begin{equation}\label{eq:vol_upper_bound}
        \ahE[s][\mathrm{vol}] (\mathbf{w}_h, \mathbf{w}_h)
        \le C^{\mathrm{vol}}\,\kappa_E\,|\mathbf{w}_h|_{1,E}^2 .
    \end{equation}
\end{corollary}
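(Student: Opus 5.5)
The plan is to prove the bound directly for arbitrary $\mathbf{w}_h\in V_{h,E}$, without going through Theorem~\ref{theo:poincare_korn}. In the form stated, that theorem would introduce an extra factor $h_E^2$ and hence a non-regularity-dependent constant. The key observation is that, with the prefactor $\kappa_E/h_E$ and the weight $|e|\le h_E$, the claim reduces to the scale-invariant vertex estimate
\begin{equation*}
\sum_{i=1}^{N_E}|\mathbf{r}_i(\mathbf{w}_h)|^2\le C\,|\mathbf{w}_h|_{1,E}^2 .
\end{equation*}

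Symmetry and positive semidefiniteness are immediate. The form is $\kappa_E/h_E\ge 0$ times a sum of squares of the linear functionals $\mathbf{r}_e(\cdot)\cdot\mathbf{n}_e$, as already noted at the end of the proof of the first theorem of this subsection.

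For the upper bound, the steps are as follows.
\begin{enumerate}
\item Use $|\mathbf{r}_e\cdot\mathbf{n}_e|^2\le|\mathbf{r}_e|^2\le\tfrac12(|\mathbf{r}_i|^2+|\mathbf{r}_{i+1}|^2)$ and $|e|\le h_E$. Since each vertex belongs to exactly two edges, this gives
\begin{equation*}
a_{h,E}^{s,\mathrm{vol}}(\mathbf{w}_h,\mathbf{w}_h)\le\kappa_E\sum_i|\mathbf{r}_i|^2 .
\end{equation*}
\item Since $\mathbf{r}$ annihilates $[\mathbb{P}_1(E)]^2$, we have $\mathbf{r}(\mathbf{w}_h)=\mathbf{r}(\mathbf{w}_h-\mathbf{c})$ for $\mathbf{c}$ the vertex mean of $\mathbf{w}_h$. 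Write $\tilde{\mathbf{w}}=\mathbf{w}_h-\mathbf{c}$. Then $|\mathbf{r}_i|\le|\tilde{\mathbf{w}}(\mathbf{x}_i)|+|(\projector\tilde{\mathbf{w}})(\mathbf{x}_i)|$.
\item The projection has the form $(\projector\tilde{\mathbf{w}})(\mathbf{x})=\overline{\tilde{\mathbf{w}}}+\nabla(\projector\mathbf{w}_h)(\mathbf{x}-\bar{\mathbf{x}})$, where bars denote vertex means. Here $\overline{\tilde{\mathbf{w}}}=\mathbf{0}$, and $|\mathbf{x}_i-\bar{\mathbf{x}}|\le h_E$.
\item By Jensen's inequality applied to (\ref{eq:projection}) and (M4),
\begin{equation*}
|\nabla\projector\mathbf{w}_h|\le|E|^{-1/2}|\mathbf{w}_h|_{1,E}\le c_A^{-1/2}h_E^{-1}|\mathbf{w}_h|_{1,E}.
\end{equation*}
Hence the gradient term contributes at most $C|\mathbf{w}_h|_{1,E}$ at each vertex.
\item It remains to bound $\max_i|\tilde{\mathbf{w}}(\mathbf{x}_i)|$, which is at most the nodal oscillation. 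By the triangle inequality and Cauchy--Schwarz,
\begin{equation*}
\max_i|\tilde{\mathbf{w}}(\mathbf{x}_i)|\le\sum_i|\mathbf{w}_{i+1}-\mathbf{w}_i|\le\Big(\sum_i\tfrac{|\mathbf{w}_{i+1}-\mathbf{w}_i|^2}{|e_i|}\Big)^{1/2}\Big(\sum_i|e_i|\Big)^{1/2}.
\end{equation*}
\item Combine this with $\sum_i|e_i|\le N_{\max}h_E$ and the lower inequality of Lemma~\ref{lemma:difference_inequality}, applied componentwise. This gives $\sum_i|e_i|^{-1}|\mathbf{w}_{i+1}-\mathbf{w}_i|^2\le (C_2h_E)^{-1}|\mathbf{w}_h|^2_{H^{1/2}(\partial E)}$. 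The factors of $h_E$ cancel, yielding
\begin{equation*}
\max_i|\tilde{\mathbf{w}}(\mathbf{x}_i)|^2\le (N_{\max}/C_2)\,|\mathbf{w}_h|^2_{H^{1/2}(\partial E)}.
\end{equation*}
\item Summing over at most $N_{\max}$ vertices then gives the reduced vertex estimate, provided the boundary seminorm is controlled by $|\mathbf{w}_h|_{1,E}$.
\end{enumerate}

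The main obstacle is that last ingredient: the trace inequality $|v|_{H^{1/2}(\partial E)}\le C_{\mathrm{tr}}|v|_{1,E}$ for the unscaled Gagliardo seminorm, with $C_{\mathrm{tr}}$ depending only on $\rho_0$. I would obtain it by dilating $E$ to unit diameter, noting that both seminorms are scale invariant in 2D. The rescaled polygons are uniformly Lipschitz by (M1), being star-shaped with respect to a ball of radius $\rho_0$, so the standard $H^1\to H^{1/2}$ trace theorem holds with a uniform constant. Collecting the constants then gives $C^{\mathrm{vol}}$ depending only on $\rho_0$, $c_e$ and $N_{\max}$.
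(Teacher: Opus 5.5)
Your proof is correct, and it is more complete than what the paper offers: the paper gives no separate proof of this corollary. Symmetry and semidefiniteness are read off from the sum-of-squares structure exactly as you do. The upper bound is only asserted to follow from ``a trace estimate''. Presumably this means the scaled inequality $\|\mathbf{v}\|_{0,\partial E}^2\le C(h_E^{-1}\|\mathbf{v}\|_{0,E}^2+h_E|\mathbf{v}|_{1,E}^2)$ of the appendix applied to $(I-\projector)\mathbf{w}_h$, using $|e|\,|\mathbf{r}_e|^2\le\int_e|(I-\projector)\mathbf{w}_h|^2\,ds$ because $\mathbf{r}_e$ is the edge mean of an affine function. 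It would be combined with the kernel Poincar\'e inequality $\|\mathbf{w}\|_{0,E}\le Ch_E|\mathbf{w}|_{1,E}$ asserted in the proof of Theorem~\ref{theo:poincare_korn}. That route is shorter, but its Poincar\'e step hides the real work. Functions in $\missingkernel$ have zero vertex mean rather than zero integral mean, and point values are not controlled by the $H^1$ seminorm in two dimensions. One therefore needs exactly what your steps 5--6 provide: control of the nodal oscillation by the diagonal-term half of Lemma~\ref{lemma:difference_inequality}, and then by a scale-invariant trace inequality.

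Avoiding Theorem~\ref{theo:poincare_korn} is also well judged. $\sum_i|\mathbf{w}(\mathbf{x}_i)|^2$ and $|\mathbf{w}|_{1,E}^2$ are both dilation invariant in 2D, so the factor $h_E^{-2}$ in that statement is incompatible with $h$-independent constants. That theorem also only covers $\missingkernel$, whereas your shift by the vertex mean gives the bound on all of $V_{h,E}$ directly.

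Two small points to tighten. First, the standard trace theorem is stated with the full $H^1$ norm. To get $|v|_{H^{1/2}(\partial E)}\le C_{\mathrm{tr}}|v|_{1,E}$ you should apply it to $v$ minus its mean and use a Poincar\'e constant that is uniform under (M1). Both facts follow, e.g., from the radial bi-Lipschitz map of the rescaled element onto the unit disk. Second, $c_e$ is not needed: the diagonal terms give $\sum_i|\mathbf{w}_{i+1}-\mathbf{w}_i|^2\le|\mathbf{w}_h|^2_{H^{1/2}(\partial E)}$ directly, hence $\max_i|\tilde{\mathbf{w}}(\mathbf{x}_i)|^2\le N_{\max}|\mathbf{w}_h|^2_{H^{1/2}(\partial E)}$.
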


\section{Spectral analysis of the stabilization term}
\label{sec:spectral_analysis}

\paragraph{}This section establishes a spectral framework for assessing and designing stabilization terms in finite-strain virtual element methods. The element formulation splits into a consistent contribution, computable through polynomial projections, and a stabilization contribution acting only on the unresolved subspace $\missingkernel$. Since robustness requires the stabilization to control exactly these missing modes without dominating the physical response, the analysis focuses on comparing the stabilization energy to the target (Newton) tangent energy restricted to $\missingkernel$.

\paragraph{}The first part recasts the canonical VEM stability requirement on $\missingkernel$ as a \emph{spectral equivalence} condition. In matrix form, this equivalence is characterized by the generalized eigenvalues of a stabilization matrix relative to the target tangent matrix, providing a basis-independent measure of how stiffness is distributed among missing deformation patterns. The second part links this spectral viewpoint to finite-strain Newton linearization: when the consistent energy depends on the displacement only through a projector, its second variation vanishes on $\missingkernel$, so kernel increments receive stiffness solely through stabilization. Taken together, these observations identify stabilization design as a problem of enforcing spectral equivalence on $\missingkernel$ with constants that remain robust under mesh distortion and in the nearly incompressible regime. The resulting eigenvalue diagnostics motivate the modal analyses reported later in Section~\ref{sec:modal_analysis}.

\subsection{Spectral equivalence}
\paragraph{}Fix a Newton state $\mathbf{u}^*$. To simplify notation, the state is omitted from the bilinear forms throughout this section. The stabilization form $\ahE[s](\cdot, \cdot)$ is said to be spectrally equivalent to the tangent $a_E(\cdot, \cdot)$ on $\missingkernel$ if there exist constants $C_0, C_1 > 0$ such that, for every $\mathbf{w}_h \in \missingkernel$, the inequality (\ref{eq:vem_stabilization_inequality}) holds. The lower bound ensures that $\ahE[s](\cdot, \cdot)$ controls every missing mode, ruling out spurious zero-energy (hourglass) modes and preserving coercivity and conditioning. The upper bound prevents the stabilization from being excessively stiff relative to the physics, thus avoiding artificial over-stiffness and stabilization-dominated responses.

\paragraph{}Upon restriction to $\missingkernel$, both bilinear forms become symmetric positive definite (SPD) matrices: $\mathbf{K}_E$ denotes the matrix of $a_E(\cdot, \cdot)$ and $\mathbf{S}_E$ the matrix of the stabilization, each restricted to $\missingkernel$. In this setting, inequality (\ref{eq:vem_stabilization_inequality}) is equivalent to requiring that every generalized eigenvalue $\lambda$ solving
\begin{equation}
    \label{eq:eigenproblem}
    \mathbf{S}_E \mathbf{x} = \lambda \mathbf{K}_E \mathbf{x}
\end{equation}
lies in the interval $[C_0, C_1]$. The next proposition formalizes this statement.
\begin{proposition}\label{prop:spectral_equivalence}
    Choose a basis $\{ \basis[i] \}^{m}_{i=1}$ of $\missingkernel$. Define the matrices:
    \begin{equation}
        \label{eq:componentwise_matrices}
        (\mathbf{K}_E)_{ij} = a_E(\basis[j], \basis[i]), \quad (\mathbf{S}_E)_{ij} = \ahE[s] (\basis[j], \basis[i]).
    \end{equation}
    Assume:
    \begin{enumerate}
        \item both linear forms are symmetric, so $\mathbf{K}_E$ and $\mathbf{S}_E$ are symmetric;
        \item $a_E(\mathbf{w}, \mathbf{w}) > 0$ for all $\mathbf{w} \in \missingkernel - \{\mathbf{0}\}$, and likewise $\ahE[s](\mathbf{w}, \mathbf{w})>0$, for all $\mathbf{w} \neq \mathbf{0}$ on $\missingkernel$.
    \end{enumerate}
    Then, $\mathbf{K}_E$ and $\mathbf{S}_E$ are symmetric positive definite on $\missingkernel$. The inequality (\ref{eq:vem_stabilization_inequality}) for all $\mathbf{w}\in \missingkernel$ is equivalent to say that all generalized eigenvalues $\lambda$ of the eigenpair $(\mathbf{K}_E, \mathbf{S}_E)$ solving (\ref{eq:eigenproblem}) lies in $[C_0, C_1]$.
\end{proposition}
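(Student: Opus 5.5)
The plan is to reduce the statement to the standard Courant--Fischer characterization of a symmetric definite matrix pencil. Everything will be expressed in coordinates with respect to the chosen basis $\{\basis[i]\}_{i=1}^m$.

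\textbf{Step 1: coordinates.} Every $\mathbf{w}\in\missingkernel$ is written uniquely as $\mathbf{w}=\sum_j x_j\basis[j]$ with $\mathbf{x}\in\mathbb{R}^m$. Bilinearity and the definitions (\ref{eq:componentwise_matrices}) give
\[
a_E(\mathbf{w},\mathbf{w})=\mathbf{x}^T\mathbf{K}_E\mathbf{x},
\qquad
\ahE[s](\mathbf{w},\mathbf{w})=\mathbf{x}^T\mathbf{S}_E\mathbf{x}.
\]
The symmetry of the forms (assumption 1) gives the symmetry of $\mathbf{K}_E$ and $\mathbf{S}_E$. The map $\mathbf{x}\mapsto\mathbf{w}$ is a linear isomorphism, so $\mathbf{x}\neq\mathbf{0}$ if and only if $\mathbf{w}\neq\mathbf{0}$. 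Assumption 2 then says exactly that both matrices are positive definite, which proves the first claim.

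\textbf{Step 2: reduce the pencil to a standard eigenproblem.} Since $\mathbf{K}_E$ is SPD, it has a Cholesky factorization $\mathbf{K}_E=\mathbf{L}\mathbf{L}^T$ with $\mathbf{L}$ invertible. Set $\mathbf{y}=\mathbf{L}^T\mathbf{x}$ and $\mathbf{B}=\mathbf{L}^{-1}\mathbf{S}_E\mathbf{L}^{-T}$. The matrix $\mathbf{B}$ is symmetric positive definite. The pencil (\ref{eq:eigenproblem}) is then equivalent to $\mathbf{B}\mathbf{y}=\lambda\mathbf{y}$. Hence the generalized eigenvalues are real and positive, there are $m$ of them counting multiplicity, and they have a $\mathbf{K}_E$-orthonormal eigenbasis. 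The generalized Rayleigh quotient transforms as
\[
\frac{\mathbf{x}^T\mathbf{S}_E\mathbf{x}}{\mathbf{x}^T\mathbf{K}_E\mathbf{x}}=\frac{\mathbf{y}^T\mathbf{B}\mathbf{y}}{\mathbf{y}^T\mathbf{y}}.
\]
By the Rayleigh--Ritz theorem, its infimum over $\mathbf{x}\neq\mathbf{0}$ is $\lambda_{\min}$ and its supremum is $\lambda_{\max}$, and both are attained at the corresponding eigenvectors.

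\textbf{Step 3: the two implications.}

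($\Rightarrow$) Assume (\ref{eq:vem_stabilization_inequality}) holds on $\missingkernel$. Let $(\lambda,\mathbf{x})$ be an eigenpair and take $\mathbf{w}=\sum_j x_j\basis[j]$. Testing (\ref{eq:eigenproblem}) with $\mathbf{x}$ gives $\ahE[s](\mathbf{w},\mathbf{w})=\lambda\, a_E(\mathbf{w},\mathbf{w})$. Since $a_E(\mathbf{w},\mathbf{w})>0$, we may divide, and the inequality yields $C_0\le\lambda\le C_1$.

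($\Leftarrow$) Assume all eigenvalues lie in $[C_0,C_1]$. Then $C_0\le\lambda_{\min}$ and $\lambda_{\max}\le C_1$. By Step 2, every nonzero $\mathbf{w}$ satisfies
\[
C_0\le\lambda_{\min}\le\frac{\ahE[s](\mathbf{w},\mathbf{w})}{a_E(\mathbf{w},\mathbf{w})}\le\lambda_{\max}\le C_1.
\]
Multiplying through by $a_E(\mathbf{w},\mathbf{w})>0$ gives (\ref{eq:vem_stabilization_inequality}). The case $\mathbf{w}=\mathbf{0}$ is trivial.

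\textbf{Remarks on difficulty and basis independence.} There is no substantial obstacle here. The only point that needs care is Step 2, where one must justify that the generalized problem has real eigenvalues and that the extremal Rayleigh quotients coincide with the extreme eigenvalues. The Cholesky congruence handles both. The sharp constants are $C_0=\lambda_{\min}$ and $C_1=\lambda_{\max}$. Changing the basis replaces $(\mathbf{K}_E,\mathbf{S}_E)$ by the congruent pair $(\mathbf{T}^T\mathbf{K}_E\mathbf{T},\mathbf{T}^T\mathbf{S}_E\mathbf{T})$ for some invertible $\mathbf{T}$. This leaves the spectrum of the pencil unchanged, which gives the basis independence mentioned in the text.
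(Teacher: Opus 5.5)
Your proof is correct and follows essentially the same route as the paper: coordinates in the kernel basis, congruence of the pencil to a standard symmetric eigenproblem, and Rayleigh--Ritz for the extreme eigenvalues. The differences are cosmetic. You factor $\mathbf{K}_E$ by Cholesky where the paper uses the symmetric square root $\mathbf{K}_E^{1/2}$, you prove the forward direction by testing the eigen-equation with its own eigenvector, and you make the positive-definiteness claim and the basis independence explicit, which the paper leaves implicit.
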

\begin{proof}
    See Appendix \ref{ap:spectral_equivalence}.
\end{proof}

\paragraph{}This result shows that the canonical VEM stability requirement on the projector kernel, expressed by (\ref{eq:vem_stabilization_inequality}), is equivalent to bounding the generalized Rayleigh quotient
\[
    \rho(\mathbf{x})=\frac{\mathbf{x}^{\mathsf{T}}\mathbf{S}_E\mathbf{x}}{\mathbf{x}^{\mathsf{T}}\mathbf{K}_E\mathbf{x}},
\]
and, consequently, to requiring that every generalized eigenvalue of the pair $(\mathbf{S}_E,\mathbf{K}_E)$ remains in a fixed interval $[C_0,C_1]$, independently of the chosen basis of $\missingkernel$ (cf.\ (\ref{eq:eigenproblem}) and (\ref{eq:aux_rayleight_stab_ineq})). The inequality is therefore not merely an abstract coercivity statement, but a spectral equivalence condition between the stabilization-induced stiffness and the target tangent stiffness on the unresolved subspace.

\paragraph{}Two consequences of this result are relevant to the present work. First, it provides a practical diagnostic for stabilization choices: if the generalized eigenvalues spread significantly or drift outside an admissible interval, the stabilization mis-scales some missing deformation patterns, leading either to spurious low-energy modes (loss of coercivity, poor conditioning) or to excessive penalization (stabilization-dominated response). Second, in the finite-strain setting the matrices $\mathbf{K}_E$ and $\mathbf{S}_E$ depend on the current Newton state $\mathbf{u}^*$, so the same spectral criterion applies locally in deformation and can be monitored along the loading path. This motivates the modal analyses in Section~\ref{sec:modal_analysis}, where generalized eigenpairs restricted to $\missingkernel$ are used to assess how candidate stabilizations distribute stiffness among unresolved modes, with emphasis on robustness under polygonal distortion and in the nearly incompressible regime.

\subsection{Spectral analysis}

\paragraph{}The spectral equivalence results above are not only theoretical stability statements, but also provide a basis-independent way to \emph{diagnose} a stabilization choice. Indeed, once both the target tangent form and the stabilization form are restricted to the missing-mode space $\missingkernel$, the VEM stability yardstick reduces to bounds on a generalized Rayleigh quotient and, equivalently, to bounds on generalized eigenvalues. This observation turns the abstract requirement
\[
C_0\,a_E(\delta\mathbf{u},\delta\mathbf{u}) \le \ahE[s](\delta\mathbf{u},\delta\mathbf{u}) \le C_1\,a_E(\delta\mathbf{u},\delta\mathbf{u})
\qquad \forall\,\delta\mathbf{u}\in\missingkernel
\]
into a concrete \emph{mode-by-mode} statement: each generalized eigenvector represents an unresolved deformation pattern, and its eigenvalue measures the ratio between stabilization-induced stiffness and target tangent stiffness along that pattern.

\paragraph{}This modal viewpoint has been emphasized in the lowest-order ($k=1$) setting by \cite{Fujimoto2024}, where eigenpairs of the element stiffness decomposition are used to interpret stabilization as an assignment of energy to deformation patterns not reproduced by the consistent part. In polygonal elements, and especially for anisotropic or distorted shapes, such a mode-resolved interpretation is particularly informative: a single scalar scaling may distribute stiffness unevenly across unresolved patterns, producing spurious soft modes (hourglass behavior) or excessive penalization of specific deformation families.

\paragraph{}In the present finite-strain setting, the same diagnostic can be applied by replacing the linear-elastic stiffness spectrum with the spectrum of the \emph{current Newton tangent} restricted to $\missingkernel$. In view of Proposition~\ref{prop:kernel_tangent_from_stabilization}, this restriction is also the natural object for analyzing stabilization effects: kernel increments are invisible to the consistent contribution, yet may carry non-trivial strains, so their tangent energy must be supplied and correctly scaled by the stabilization channel. The generalized eigenvalues on $\missingkernel$ therefore provide a direct quantitative measure of whether a given stabilization achieves the intended energy scaling at the current deformation state, and whether this scaling remains robust with respect to polygon geometry and (near-)incompressibility. Section~\ref{sec:single_element_kernel_diagnostic} leverages this framework to compare stabilization choices through their missing-mode spectra.

\begin{proposition}\label{prop:kernel_tangent_from_stabilization}
    Assume that the consistency energy depends on $\mathbf{u}$ only through $\projector \mathbf{u}$, i.e.
    \begin{equation}
        U^c_E(\mathbf{u}) = \overline{U}^c_E(\mathbf{u}) (\projector \mathbf{u})
    \end{equation}
    for some functional $\overline{U}^c_E(\mathbf{u})$ defined on the polynomial image space. For every Newton state and every $\increment{u}, \increment{v} \in \missingkernel$,
    \begin{equation}
        D^2U_E^c (\ustar)[\increment{u}, \increment{v}] = 0 \Rightarrow \mathbf{K}_E (\mathbf{u}) |_{\missingkernel} = \mathbf{K}_E^s (\ustar)|_{\missingkernel}.
    \end{equation}
    In other words, on the projector kernel, the entire tangent comes solely from stabilization.
\end{proposition}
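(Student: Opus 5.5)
The plan is to prove the statement with the chain rule applied to the composition $U^c_E=\overline{U}^c_E\circ\projector$, using only that $\projector$ is linear and independent of the state. Let $\mathbf{u}^*$ be the current Newton state and $\increment{u},\increment{v}\in V_{h,E}$. Consider the map $(s,t)\mapsto U^c_E(\ustar+s\,\increment{u}+t\,\increment{v})$. Linearity gives
\[
U^c_E(\ustar+s\,\increment{u}+t\,\increment{v})=\overline{U}^c_E\bigl(\projector\ustar+s\,\projector\increment{u}+t\,\projector\increment{v}\bigr).
\]
Differentiating twice at $s=t=0$ then yields
\[
D^2U_E^c(\ustar)[\increment{u},\increment{v}]=D^2\overline{U}^c_E(\projector\ustar)[\projector\increment{u},\projector\increment{v}].
\]
No second derivative of $\projector$ appears, because it is a fixed linear map; this is the only point that needs care. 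The required smoothness of $\overline{U}^c_E$ holds here: for the neo-Hookean consistency energy (\ref{eq:vem_consistency_energy}), $\overline{U}^c_E$ is $|E|\,\psi$ evaluated at $\mathbf{I}+\nabla(\cdot)$, which is $C^2$ wherever $J>0$. This also recovers the explicit form (\ref{eq:vem_consistency_bilinear_form}) as a consistency check.

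Next, take either $\increment{u}\in\missingkernel$ or $\increment{v}\in\missingkernel$. Then $\projector\increment{u}=\mathbf{0}$ (or $\projector\increment{v}=\mathbf{0}$), and bilinearity of the second derivative gives $D^2U_E^c(\ustar)[\increment{u},\increment{v}]=0$. In particular this holds for all pairs in $\missingkernel\times\missingkernel$, which is the first claim. It also shows that the mixed kernel/image block of the consistent tangent vanishes.

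Finally, by (\ref{eq:vem_element_discrete_bilinear_forms})--(\ref{eq:vem_element_residual_and_stiffness}) the element tangent is the sum $a_{h,E}=a^c_{h,E}+a^s_{h,E}$. Choose a basis $\{\basis[i]\}$ of $\missingkernel$ as in Proposition~\ref{prop:spectral_equivalence} and form the restricted Gram matrices. The consistency block $(\mathbf{K}^c_E)_{ij}=D^2U^c_E(\ustar)[\basis[j],\basis[i]]$ is identically zero, so $\mathbf{K}_E(\ustar)|_{\missingkernel}=\mathbf{K}^s_E(\ustar)|_{\missingkernel}$.

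For the proposed stabilization, (\ref{eq:vem_stabilization_kernel_action}) gives $a^s_{h,E}(\increment{u},\increment{v})=a^s_{h,E}(\increment{u}-\projector\increment{u},\increment{v}-\projector\increment{v})$, which equals $a^s_{h,E}(\increment{u},\increment{v})$ on the kernel. The restriction is therefore exactly the stabilization matrix $\mathbf{S}_E$ used in (\ref{eq:eigenproblem}).

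I do not expect a real obstacle. The only subtle point is to state clearly that $\projector$ is linear and state-independent, so the chain rule produces no extra terms. One should also note that the notation $\overline{U}^c_E(\mathbf{u})(\projector\mathbf{u})$ in the statement should be read as $\overline{U}^c_E(\projector\mathbf{u})$, with $\overline{U}^c_E$ defined on $[\mathbb{P}_1(E)]^2$.
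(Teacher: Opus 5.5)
Your proposal is correct and follows essentially the same route as the paper: both reduce the claim to the identity $D^2U^c_E(\ustar)[\increment{u},\increment{v}]=D^2\overline{U}^c_E(\projector\ustar)[\projector\increment{u},\projector\increment{v}]$, which holds because $\projector$ is a fixed linear map, and then use $\projector\increment{u}=\projector\increment{v}=\mathbf{0}$ on the kernel. The paper derives that identity by proving the Fr\'echet chain rule from remainder estimates and then differentiating $\mathbf{u}\mapsto D\overline{U}^c_E(\projector\mathbf{u})[\projector\increment{u}]$. You obtain it more economically from the two-parameter restriction $(s,t)\mapsto U^c_E(\ustar+s\,\increment{u}+t\,\increment{v})$ on the finite-dimensional space $V_{h,E}$, and you also make explicit the final matrix-restriction step that the paper leaves implicit.
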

\begin{proof}
    See Appendix \ref{ap:kernel_tangent_from_stabilization}.
\end{proof}

\begin{remark}
    Let $Z$ be a real Banach space and let $f:Z \longrightarrow \mathbb{R}$ be Fréchet differentiable at $\mathbf{z}$. By definition, there exists a bounded linear functional $Df(\mathbf{z}) \in Z^*$, where $Z^*$ is the dual space of $Z$, such that:
    \begin{equation}
        f(\mathbf{z}+\mathbf{y}) = f(\mathbf{z}) + Df(\mathbf{z})[\mathbf{y}] + r_f(\mathbf{y}), \; \frac{|r_f(\mathbf{y})|}{\| \mathbf{y}\|_Z}  \longrightarrow \; \text{as} \; \|\mathbf{y} \|_Z \longrightarrow 0.
    \end{equation}
    Here, the direction $\mathbf{y}$ already appears in the linear term as the evaluation $Df(\mathbf{z})[\mathbf{y}] \in \mathbb{R}$.

    If, moreover, $f$ is twice Fréchet differentiable, then the map $Df:Z \longrightarrow Z^*$ is Fréchet differentiable at $\mathbf{z}$. Hence, there exists a bounded linear operator $D^2f(\mathbf{z})\in \mathcal{L}(Z,Z^*)$ such that:
    \begin{equation}
        Df(\mathbf{z} + \mathbf{y}) = Df(\mathbf{z}) + D^2f(\mathbf{z})[\mathbf{y}]+r_{Df}(\mathbf{y}), \; \frac{\|r_{Df}(\mathbf{y})\|_{Z^*}}{\|\mathbf{y}\|_Z} \longrightarrow 0 \; \text{as} \; \| \mathbf{y} \|_Z \longrightarrow 0.
    \end{equation}
    Note that $\mathcal{L}(Z,Z^*)$ is the space of bounded linear operators from $Z$ to $Z^*$. In this second-order expansion, applying $D^2f(\mathbf{z})$ to $\mathbf{y}$ is necessary because $D^2f(\mathbf{z})$ is a linear map with values in $Z^*$: for each increment $\mathbf{y} \in Z$, $D^2f(\mathbf{z})[\mathbf{y}]$ is a functional in $Z^*$. If one wants to use the usual Hessian bilinear action, one evaluates once more on $\bm{\eta}\in Z$, obtaining $D^2f(\mathbf{z})[\mathbf{y}][\bm{\eta}]\in \mathbb{R}$.
\end{remark}

\begin{proposition}\label{prop:quadratic_form_spectral_equivalence}
    Let 
    \begin{equation}
        q^s (\increment{u}) = \ahE[s](\increment{u}, \increment{u})
    \end{equation}
    be the stabilization quadratic form on $\missingkernel$, and let 
    \begin{equation}
        q^*(\increment{u}) = a_E (\increment{u}, \increment{u}; \ustar)
    \end{equation}
    be the target tangent quadratic form on $\missingkernel$. Assume that both are quadratic forms on $\missingkernel$, with $q^*(\increment{u})>0$ for $\increment{u} \neq \mathbf{0}$ (i.e., positive definite on $\missingkernel$). Suppose that there exist constants $0 < C_* \leq C^*$ such that, for all $\increment{u} \in \missingkernel$,
    \begin{equation}
        \label{eq:stability_criteria_eigenproblem}
        C_* q^*(\increment{u}) \leq q^s(\increment{u}) \leq C^* q^*(\increment{u}).
    \end{equation}
    If $\mathbf{K}^s$ and $\mathbf{K}^*$ are the matrices representing $q^s$ and $q^*$, respectively, in any basis of $\missingkernel$, then every generalized eigenvalue $\lambda$ of
    \begin{equation}
        \mathbf{K}^s \mathbf{x} = \lambda \mathbf{K}^* \mathbf{x}
    \end{equation}
    satisfies
    \begin{equation}
        C_* \leq \lambda \leq C^*.
    \end{equation}
\end{proposition}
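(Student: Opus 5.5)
The proof translates the quadratic-form inequality (\ref{eq:stability_criteria_eigenproblem}) into a matrix statement and then evaluates it at a generalized eigenvector. Fix a basis $\{\basis[i]\}_{i=1}^m$ of $\missingkernel$ and identify each $\increment{u}=\sum_i x_i\basis[i]$ with its coefficient vector $\mathbf{x}\in\mathbb{R}^m$. By bilinearity, and taking $\mathbf{K}^s,\mathbf{K}^*$ as the symmetric matrices representing the quadratic forms, we have
\[
q^s(\increment{u})=\mathbf{x}^{\mathsf{T}}\mathbf{K}^s\mathbf{x}, \qquad q^*(\increment{u})=\mathbf{x}^{\mathsf{T}}\mathbf{K}^*\mathbf{x}.
\]
If an associated bilinear form is nonsymmetric, we replace its matrix by the symmetric part; this does not change the quadratic form. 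Since the coordinate map is a linear isomorphism, (\ref{eq:stability_criteria_eigenproblem}) becomes
\[
C_*\,\mathbf{x}^{\mathsf{T}}\mathbf{K}^*\mathbf{x}\le \mathbf{x}^{\mathsf{T}}\mathbf{K}^s\mathbf{x}\le C^*\,\mathbf{x}^{\mathsf{T}}\mathbf{K}^*\mathbf{x}\qquad\forall\,\mathbf{x}\in\mathbb{R}^m.
\]
The hypothesis $q^*>0$ on $\missingkernel\setminus\{\mathbf{0}\}$ makes $\mathbf{K}^*$ symmetric positive definite. The lower bound then gives $\mathbf{x}^{\mathsf{T}}\mathbf{K}^s\mathbf{x}\ge C_*\,\mathbf{x}^{\mathsf{T}}\mathbf{K}^*\mathbf{x}>0$ for $\mathbf{x}\neq\mathbf{0}$, so $\mathbf{K}^s$ is SPD as well.

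The only point needing care is that the generalized eigenvalues are real and have real eigenvectors, so that they can be inserted into a real quadratic form. Because $\mathbf{K}^*$ is SPD, it has a Cholesky factorization $\mathbf{K}^*=\mathbf{L}\mathbf{L}^{\mathsf{T}}$. Substituting $\mathbf{y}=\mathbf{L}^{\mathsf{T}}\mathbf{x}$ turns $\mathbf{K}^s\mathbf{x}=\lambda\mathbf{K}^*\mathbf{x}$ into the standard symmetric eigenproblem
\[
\mathbf{L}^{-1}\mathbf{K}^s\mathbf{L}^{-\mathsf{T}}\mathbf{y}=\lambda\mathbf{y}.
\]
Its eigenvalues are therefore real and its eigenvectors can be taken real. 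Now let $(\lambda,\mathbf{x})$ be any eigenpair with $\mathbf{x}\neq\mathbf{0}$ real. Multiplying $\mathbf{K}^s\mathbf{x}=\lambda\mathbf{K}^*\mathbf{x}$ on the left by $\mathbf{x}^{\mathsf{T}}$ gives
\[
\lambda=\frac{\mathbf{x}^{\mathsf{T}}\mathbf{K}^s\mathbf{x}}{\mathbf{x}^{\mathsf{T}}\mathbf{K}^*\mathbf{x}},
\]
where the denominator is positive. Dividing the matrix form of (\ref{eq:stability_criteria_eigenproblem}) by $\mathbf{x}^{\mathsf{T}}\mathbf{K}^*\mathbf{x}$ yields $C_*\le\lambda\le C^*$.

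Finally, the conclusion does not depend on the basis chosen. A change of basis $\mathbf{x}=\mathbf{T}\mathbf{z}$ with $\mathbf{T}$ invertible maps the pair to the congruent pair $(\mathbf{T}^{\mathsf{T}}\mathbf{K}^s\mathbf{T},\mathbf{T}^{\mathsf{T}}\mathbf{K}^*\mathbf{T})$, which has the same generalized eigenvalues. The argument also follows directly from Proposition~\ref{prop:spectral_equivalence} once $\mathbf{K}^s$ is known to be SPD.
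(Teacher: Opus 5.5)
Your proof is correct and follows the same route as the paper's. Both represent the two forms by symmetric matrices in a basis of $\missingkernel$, observe that at any generalized eigenpair $\lambda$ equals the Rayleigh quotient $\mathbf{x}^{\mathsf{T}}\mathbf{K}^s\mathbf{x}/\mathbf{x}^{\mathsf{T}}\mathbf{K}^*\mathbf{x}$, and read off $C_*\le\lambda\le C^*$ from the coordinate form of the inequality. Your Cholesky reduction shows that $\lambda$ is real with a real eigenvector, and together with your symmetric-part remark it supplies justifications that the paper takes for granted; the paper even states the eigen-relation only in the scalar form $\mathbf{x}^{\mathsf{T}}\mathbf{K}^s\mathbf{x}=\lambda\,\mathbf{x}^{\mathsf{T}}\mathbf{K}^*\mathbf{x}$.
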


\begin{proof}
    See Appendix \ref{ap:quadratic_form_spectral_equivalence}.
\end{proof}

\begin{remark}
    In the hyperelastic VEM split, the consistent part is often constructed as a composition with the projector, e.g.,
    \begin{equation}
        U_E^c(\mathbf{u}) = \overline{U}_E^c (\projector \mathbf{u}).
    \end{equation}
    As a consequence, its Newton tangent satisfies
    \begin{equation}
        D^2U_E^c (\mathbf{u})[\increment{u}, \increment{u}] = 0, \; \forall \increment{u} \in \missingkernel,
    \end{equation}
    i.e., the consistent tangent energy vanishes on the projector kernel. This, however, does not imply that the physical tangent energy vanishes on $\missingkernel$. Indeed, $\increment{u}$ means only that the polynomial content is zero, not that $\nabla \increment{u} = \mathbf{0}$. In general $\projector \increment{u} = \mathbf{0}$ does not imply $\nabla \increment{u} = \mathbf{0}$, so kernel increments may still carry non-trivial strains and, thus, nonzero tangent energy. For this reason, in the spectral-equivalence condition one takes $q^*(\increment{u}) = a_E^*(\increment{u}, \increment{u})$ as the tangent physical energy restricted to $\missingkernel$ (or a computable surrogate equivalent to it), rather than $D^2 U_E^c(\mathbf{u})[\increment{u}, \increment{u}]$, which would be identically zero on $\missingkernel$. Also, if a rigid-body increment lies in $\missingkernel$, one restricts to the admissible subspace (e.g., via boundary conditions) so that $q^*(\increment{u}) > 0$ for every $\increment{u} \neq \mathbf{0}$ in that subspace. 
\end{remark}

\begin{remark}
    In the generalized eigenvalue framework, the SPD property of $\mathbf{K}^*$ is imposed as a starting assumption, since $q^s(\increment{u}) > 0$, for all $\increment{u} \neq \mathbf{0}$ (on the admissible subspace) defines a coercive quadratic form and the generalized Rayleigh quotient is well-defined. By contrast, one cannot conclude that $\mathbf{K}^s$ is positive definite (or even symmetric) without additional hypotheses on the stabilization. Indeed, if no lower bound is assumed for $q^s$, a poorly scaled or incomplete stabilization may admit zero-energy modes in $\missingkernel$, i.e. there exists $\increment{u} \neq \mathbf{0}$ such that $q^s(\increment{u}) = \mathbf{0}$, in which case $\mathbf{K}^s$ is only positive semi-definite (hence singular) on $\missingkernel$. Also, if the stabilization tangent is not exact Hessian of an underlying scalar stabilization energy (e.g., due to frozen parameters or ad-hoc linearization), then $\mathbf{K}^s$ may even fail to be symmetric. 
\end{remark}

The results of this section place the hyperelastic VEM stabilization term in a precise spectral framework. Proposition~\ref{prop:kernel_tangent_from_stabilization} identifies the spectrum on $\missingkernel$ as the natural object for stabilization design in the finite-strain setting. Since the consistent energy is typically constructed as a composition with the projector, its second variation vanishes for kernel increments, so that on $\missingkernel$ the discrete Newton tangent is supplied entirely by the stabilization channel. This makes explicit the energy-assignment interpretation underlying the modal viewpoint: unresolved deformation patterns are invisible to the consistent part and receive stiffness only through $\ahE[s](\cdot,\cdot)$. Any mis-scaling in the stabilization therefore translates immediately into mis-scaled energies for missing modes, which may manifest as spurious softness (hourglass modes), artificial over-stiffness, or locking-like behavior, particularly on anisotropic or distorted polygons and in nearly incompressible regimes.

Proposition~\ref{prop:quadratic_form_spectral_equivalence} rephrases the same requirement at the quadratic-form level: if $q^s$ denotes the quadratic form induced by the stabilization tangent on $\missingkernel$ and $q^*$ denotes the target tangent energy restricted to $\missingkernel$ (or a computable surrogate equivalent to it), then spectral equivalence of these forms is equivalent to uniform bounds on the generalized eigenvalues of their matrix representatives. Propositions~\ref{prop:spectral_equivalence}--\ref{prop:quadratic_form_spectral_equivalence} thus reduce the stabilization design objective to the following principle: $\ahE[s](\cdot,\cdot)$ should be designed so that, at each Newton state, its restriction to $\missingkernel$ is spectrally equivalent to the restriction of the target tangent energy, with constants robust with respect to element geometry and material parameters, and in particular without degradation as $\nu \to 1/2$.

This principle provides both a theory-first criterion for robustness (uniform eigenvalue bounds on $\missingkernel$) and a practical diagnostic tool: by computing the generalized eigenpairs on $\missingkernel$, the stabilization-induced stiffness can be inspected mode by mode. Section~\ref{sec:modal_analysis} exploits this equivalence to perform an element-level modal analysis of candidate stabilization strategies, highlighting how different choices distribute energy across missing modes and how this distribution depends on polygon shape and (near-)incompressibility.

\section{Numerical experiments}
\label{sec:numerical_experiments}

\paragraph{}This section presents numerical experiments designed to validate the stabilization design principles developed in Sections~\ref{sec:spectral_analysis} and~\ref{sec:new_stab}. The experiments target two requirements of finite-strain VEM stabilization. First, the stabilization must act only on the unresolved subspace $\missingkernel$ and must not pollute the polynomial (patch-test) content. Second, restricted to $\missingkernel$, the stabilization must scale like the relevant tangent energy with constants that remain robust under admissible polygonal distortions and do not deteriorate as $\nu\to 1/2$. Since the proposed construction decouples deviatoric and volumetric channels and avoids bulk-dependent modifications of the shear scaling, the numerical assessments focus on detecting bulk-driven contamination of isochoric missing modes and on quantifying the mode-by-mode stiffness distribution induced by stabilization.

\paragraph{}The experimental program proceeds from element-level diagnostics to a representative nonlinear benchmark:
\begin{enumerate}
    \item Section~\ref{sec:single_element_kernel_diagnostic} introduces a single-element kernel-mode test that isolates an isochoric deformation pattern in $\missingkernel$ and measures the scaling of stabilization energy as $\nu$ approaches the incompressible limit, thereby directly probing volumetric-proxy leakage into the stabilization channel.
    \item Section~\ref{sec:modal_analysis} performs a kernel-restricted spectral analysis on polygonal elements to assess directional stiffness allocation, deviatoric/volumetric separation, and conditioning trends in a basis-independent manner.
    \item The Cook's membrane benchmark evaluates the macroscopic response under large deformation and shear-dominated loading across multiple mesh families, emphasizing refinement behavior and robustness in the nearly incompressible regime.
\end{enumerate}

\subsection{Single-element kernel mode diagnostic}
\label{sec:single_element_kernel_diagnostic}

A single-element diagnostic is introduced to isolate one mechanism that is critical in the nearly incompressible regime: whether the stabilization term assigns bulk-driven stiffness to deformation patterns that are isochoric. Since stabilization acts exclusively on $\missingkernel=\ker(\projector)$, a contamination of the deviatoric stabilization channel by volumetric proxies may be detected already at the element level, independently of global boundary-value effects and mesh interactions. The diagnostic therefore targets unresolved modes that (i) belong to $\missingkernel$ and (ii) preserve volume, and measures how the stabilization energy assigned to such modes varies as $\nu\to 1/2$.

The test is carried out on a single unit square element $E=[0,1]^2$ with vertices $(0,0)$, $(1,0)$, $(1,1)$, and $(0,1)$, yielding $N_E=4$ vertices and $2N_E=8$ displacement degrees of freedom collected in $\mathbf{u}\in\mathbb{R}^{8}$. Let $\mathbf{P}_E\in\mathbb{R}^{8\times 8}$ denote the matrix representation (in the vertex basis) of the $k=1$ VEM projector $\projector$ defined in (\ref{eq:projection}), so that $\mathbf{P}_E\mathbf{u}$ represents the vertex values of the projected polynomial field in $[\mathbb{P}_1(E)]^2$. Since $\dim([\mathbb{P}_1(E)]^2)=6$, the projector kernel has dimension $\dim\ker(\mathbf{P}_E)=8-6=2$. A kernel deformation mode is generated by sampling a vector $\mathbf{v}\in\mathbb{R}^{8}$ and projecting it onto the kernel,
\begin{equation}\label{eq:kernel_mode_construction}
\mathbf{u}^{\ker} := (\mathbf{I}-\mathbf{P}_E)\mathbf{v}.
\end{equation}
Kernel membership is then verified a posteriori through the relative projector residual
\begin{equation}\label{eq:kernel_membership_check}
\frac{\|\mathbf{P}_E\mathbf{u}^{\ker}\|}{\|\mathbf{u}^{\ker}\|}\approx 0,
\end{equation}
confirming that the consistent part does not detect the mode.

The volumetric character of $\mathbf{u}^{\ker}$ is assessed by considering the piecewise-affine extension $\mathbf{u}^{\ker,\mathrm{tri}}$ associated with the auxiliary fan triangulation $\mathcal{T}_E$ used in the classical stabilization (cf.\ Section~\ref{sec:classic_stab}). On each $T\in\mathcal{T}_E$, define $\mathbf{F}_T(\mathbf{u}^{\ker})=\mathbf{I}+\nabla(\mathbf{u}^{\ker,\mathrm{tri}}|_T)$ and $J_T(\mathbf{u}^{\ker})=\det(\mathbf{F}_T(\mathbf{u}^{\ker}))$. The mode is classified as isochoric when the induced volumetric change remains negligible, for instance in the sense that
\begin{equation}\label{eq:isochoric_check}
\max_{T\in\mathcal{T}_E}\big|J_T(\alpha\,\mathbf{u}^{\ker})-1\big| \ll 1
\end{equation}
for a fixed small amplitude $\alpha$ (here $\alpha=10^{-2}$). This construction yields a deformation pattern of hourglass type: a non-polynomial, unresolved mode that does not induce appreciable volume change and is therefore expected to be controlled by a shear-consistent stabilization.

The stabilization energy is then evaluated on the scaled mode $\mathbf{u}_h(\alpha)=\alpha\,\mathbf{u}^{\ker}$ for both the classical stabilization of Section~\ref{sec:classic_stab} and the proposed decoupled stabilization of Section~\ref{sec:new_stab}, while sweeping the Poisson ratio over $\nu\in\{0.3,\,0.4,\,0.45,\,0.49,\,0.499,\,0.4999\}$. Since the tested mode is isochoric, the physically consistent stiffness scale is the shear modulus $\mu$, and the stabilization energy is reported in the normalized form
\begin{equation}\label{eq:stab_normalization_isochoric}
\widehat{E}_{\mathrm{stab}}(\nu)
:=
\frac{U_{h,E}^s(\alpha\,\mathbf{u}^{\ker};\nu)}{\mu\,\alpha^2}.
\end{equation}
A stabilization that assigns only shear-consistent energy to isochoric kernel modes yields $\widehat{E}_{\mathrm{stab}}(\nu)$ approximately constant as $\nu\to 1/2$, whereas growth of $\widehat{E}_{\mathrm{stab}}(\nu)$ indicates bulk-dependent stiffening of isochoric missing modes, which is the characteristic signature of volumetric-proxy leakage into the deviatoric stabilization channel.

\paragraph{}Figure~\ref{fig:vol_proxy_energy} reports the stabilization energy $U_{h,E}^s(\alpha\,\mathbf{u}^{\ker})$ of an isochoric kernel mode as the Poisson ratio approaches the incompressible limit. Since the tested mode satisfies $\mathbf{u}^{\ker}\in\ker(\projector)$ and induces negligible volume change in the auxiliary triangulation (i.e., $J_T(\alpha\,\mathbf{u}^{\ker})\approx 1$ for all $T\in\mathcal{T}_E$), the physically relevant stiffness scale for this mode is the shear modulus $\mu$. Accordingly, a stabilization that does not introduce volumetric contamination is expected to assign an energy proportional to $\mu\,\alpha^2$ (up to geometry-dependent constants), and therefore to remain bounded and nearly independent of $\nu$ when normalized by $\mu\,\alpha^2$. The raw trends in Figure~\ref{fig:vol_proxy_energy} are consistent with this expectation for the decoupled stabilization: the energy follows only the mild variation of $\mu$ with $\nu$ (for fixed $E_Y$), whereas the classical stabilization exhibits a systematic increase as $\nu\to 1/2$.

\paragraph{}The normalization tests in Figures~\ref{fig:vol_proxy_original}--\ref{fig:vol_proxy_comparison} make the underlying scaling explicit. In Figure~\ref{fig:vol_proxy_original}, the classical stabilization normalized by the physical shear scale, $U_{h,E}^s/(\mu\,\alpha^2)$, increases strongly with $\nu$, showing that the stabilization stiffness assigned to this isochoric kernel mode grows substantially faster than $\mu$. The same energy becomes approximately constant only after normalization by the effective shear parameter $\hat{\mu}$ used internally by the classical formulation, confirming that the assigned stiffness scales primarily with $\hat{\mu}$ rather than with $\mu$. Since $\hat{\mu}$ depends on volumetric proxies (through bounded approximations of $\lambda$ and the incompressibility indicator used to inflate the shear scaling), this provides element-level evidence of bulk-driven contamination of the shear-type kernel penalty. Figure~\ref{fig:vol_proxy_comparison} summarizes the comparison under the physically relevant normalization $U_{h,E}^s/(\mu\,\alpha^2)$: the decoupled stabilization remains essentially constant across $\nu$, while the classical stabilization increases markedly as $\nu\to 1/2$. This diagnostic isolates the mechanism underlying the locking-like behavior observed in global benchmarks, namely the artificial stiffening of unresolved isochoric deformation patterns induced by volumetric proxy leakage into the stabilization channel.

\begin{figure}[H]
    \centering
    \includegraphics[width=0.85\textwidth]{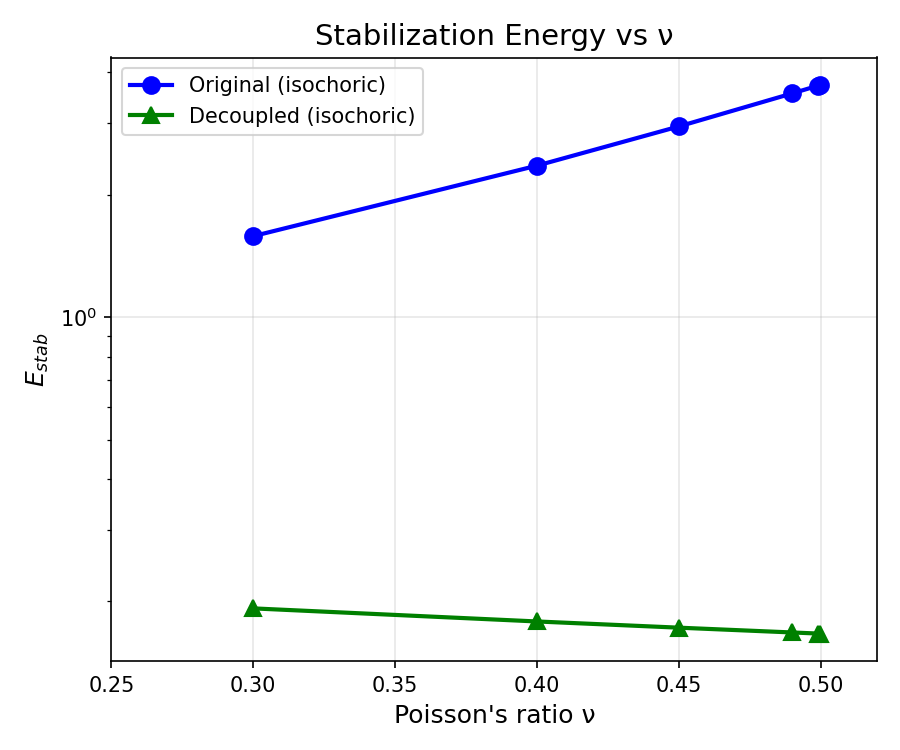}
    \caption{Single-element isochoric kernel-mode diagnostic: raw stabilization energy $U_{h,E}^s(\alpha\,\mathbf{u}^{\ker})$ as a function of Poisson ratio $\nu$ on the unit square. The same kernel mode $\mathbf{u}^{\ker}\in\ker(\projector)$ is used for all $\nu$, with fixed amplitude $\alpha=10^{-2}$ under a fixed Young modulus $E_Y$ (hence $\mu=E_Y/(2(1+\nu))$ varies mildly with $\nu$). The classical stabilization exhibits a monotone increase in energy as $\nu\to 1/2$, despite the mode being isochoric, whereas the decoupled stabilization follows only the shear-scale variation and remains bounded.}
    \label{fig:vol_proxy_energy}
\end{figure}

\begin{figure}[H]
    \centering
    \includegraphics[width=0.85\textwidth]{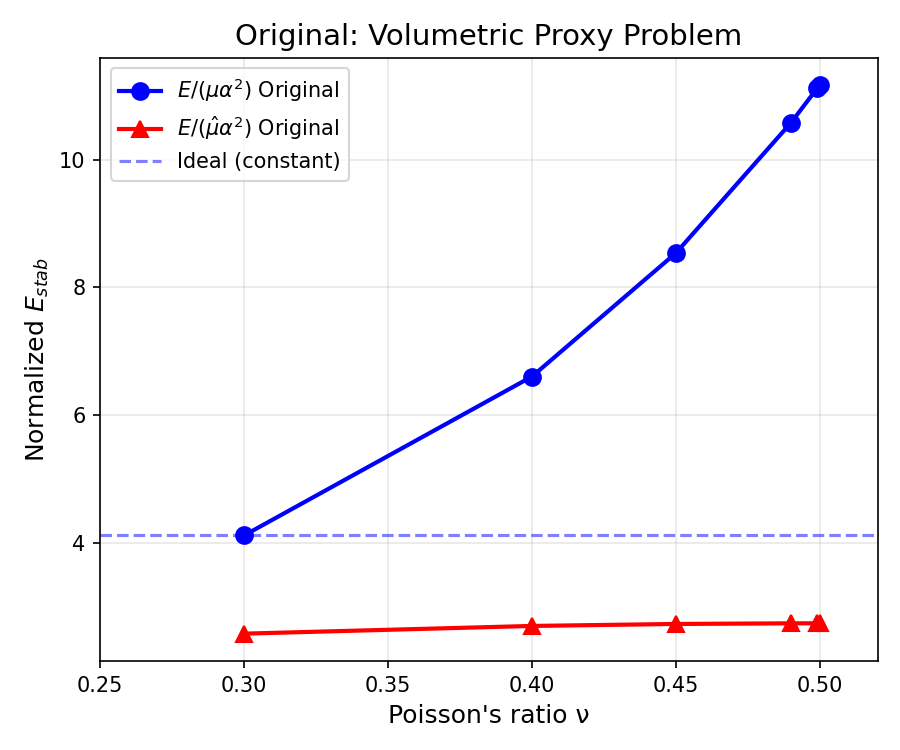}
    \caption{Single-element isochoric kernel-mode diagnostic for the classical stabilization: normalized stabilization energy versus $\nu$. The normalization by the physical shear scale, $U_{h,E}^s/(\mu\,\alpha^2)$, increases strongly as $\nu\to 1/2$, indicating bulk-driven stiffening of an isochoric kernel mode. The same energy normalized by the effective parameter used internally by the classical stabilization, $U_{h,E}^s/(\hat{\mu}\,\alpha^2)$, remains approximately constant, confirming that the classical stabilization scales this isochoric mode with $\hat{\mu}$ rather than with $\mu$.}
    \label{fig:vol_proxy_original}
\end{figure}

\begin{figure}[H]
    \centering
    \includegraphics[width=0.85\textwidth]{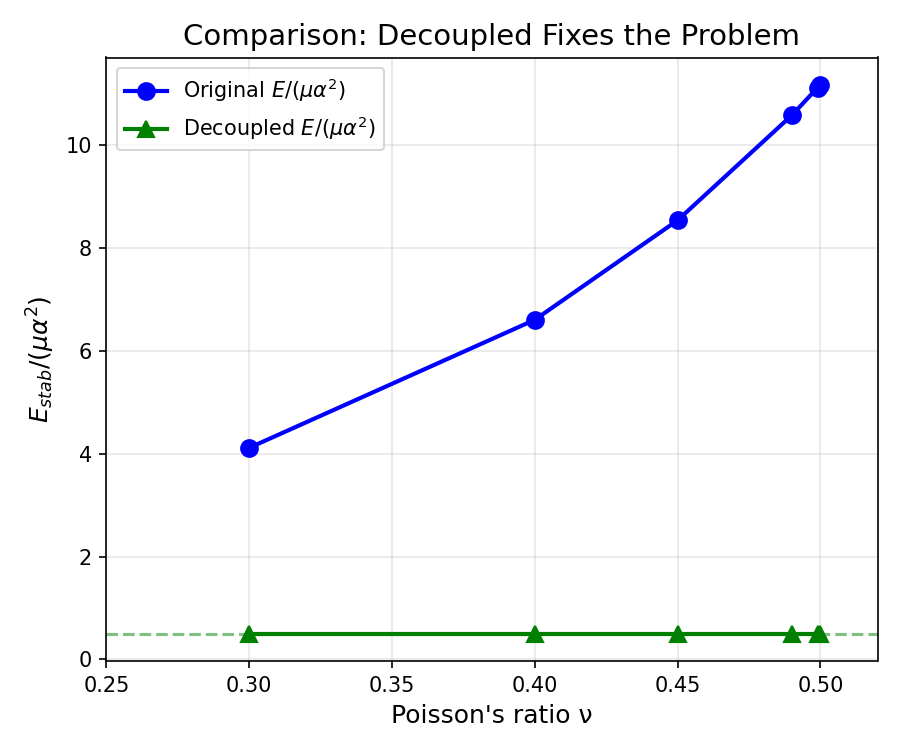}
    \caption{Comparison of classical and decoupled stabilization on an isochoric kernel mode under the physically relevant normalization $U_{h,E}^s/(\mu\,\alpha^2)$. The decoupled stabilization remains essentially constant over the full range of $\nu$, consistent with a shear-scaled penalty on isochoric missing modes. The classical stabilization grows substantially as $\nu\to 1/2$, providing element-level evidence of volumetric-proxy leakage into the shear-type stabilization channel.}
    \label{fig:vol_proxy_comparison}
\end{figure}

\subsection{Modal analysis}\label{sec:modal_analysis}

A single-element modal (spectral) analysis is employed to assess the proposed decoupled stabilization operator at the element level, independently of global boundary-value effects. Consider a polygonal element $E$ with $N_E$ vertices and $2N_E$ displacement degrees of freedom collected in the vector $\mathbf{u}\in\mathbb{R}^{2N_E}$. Let $\mathbf{P}_E\in\mathbb{R}^{2N_E\times 2N_E}$ denote the matrix representation (in the vertex basis) of the projector $\projector$ introduced in (\ref{eq:projection}), i.e., $\mathbf{P}_E\mathbf{u}$ corresponds to the vertex values of the projected polynomial field in $\left[\mathbb{P}_1(E)\right]^2$. The kernel component is measured by the residual operator
\begin{equation}
    \mathbf{r}(\mathbf{u})=(\mathbf{I}-\mathbf{P}_E)\mathbf{u},
\end{equation}
which is the discrete counterpart of (\ref{eq:stacked_residual_kernel}) and vanishes on $\left[\mathbb{P}_1(E)\right]^2$.

The stabilization operator is assembled in matrix form from the deviatoric and volumetric bilinear forms defined in (\ref{eq:deviatoric_stab_matrix_form}) and (\ref{eq:volumetric_stab_matrix_form}). The deviatoric contribution embeds the principal-frame weighting $\mathbf{Q}_E$ and directional factors $(\tau_1,\tau_2)$ driven by the aspect indicator $r_E$ and the anisotropy map $g$ in (\ref{eq:aspect_indicator})--(\ref{eq:anisotropy_map}), whereas the volumetric contribution couples neighbouring residual components through edge normals to penalize spurious normal expansion/compression in the kernel component. Denoting by $\mathbf{S}_E=\mathbf{S}_{E}^{\mathrm{dev}}+\mathbf{S}_{E}^{\mathrm{vol}}$ the corresponding stabilization matrix, missing-mode behavior is isolated by restricting $\mathbf{S}_E$ to $\ker(\mathbf{P}_E)$: a basis matrix $\mathbf{Z}$ for $\ker(\mathbf{P}_E)$ is computed and the reduced operator $\mathbf{Z}^{\mathsf{T}}\mathbf{S}_E\mathbf{Z}$ is formed. The eigendecomposition of this kernel-restricted matrix provides stabilization modes (eigenvectors) and their associated stiffnesses (eigenvalues) on $\missingkernel$.

The resulting spectra are consistent with the design goals. For the unit square ($2N_E=8$), exactly six eigenvalues are numerically zero, matching $\dim(\left[\mathbb{P}_1(E)\right]^2)=6$ and confirming that polynomial fields lie in the nullspace of the stabilization. The kernel dimension is therefore two, and the two kernel eigenvalues coincide (approximately $76.923$), as expected from square symmetry. When rectangles are used to vary the geometric aspect ratio $r_E$, the kernel eigenvalue ratio $\lambda_{\max}/\lambda_{\min}$ tracks the designed directional ratio $\tau_1/\tau_2=g(r_E)^2$ essentially exactly, demonstrating that the principal-frame weighting redistributes missing-mode stiffness in the intended anisotropic manner (Figure~\ref{fig:modal_anisotropy}). On a hexagon (for which $\dim\ker(\mathbf{P}_E)=6$), varying the bulk scale $\kappa_E$ yields a clean deviatoric/volumetric separation: a subset of kernel eigenvalues remains constant (purely deviatoric modes), while the remaining modes increase with $\kappa_E$ (modes with volumetric content), indicating the absence of bulk-driven contamination of the deviatoric channel (Figure~\ref{fig:modal_dev_vol}). In a near-incompressible sweep (varying $\nu$ up to $0.4999$), the kernel eigenvalues normalised by $\mu$ remain flat, supporting the claim that no $\lambda$-dependent proxy inflates the deviatoric stabilization. Conditioning trends are also consistent: regular polygons exhibit near-unit kernel condition numbers, while rectangles show predictable growth proportional to the imposed anisotropy (approximately $r_E^2$), which can be controlled by the cap $g_{\max}$ in (\ref{eq:anisotropy_map}) (Figure~\ref{fig:modal_conditioning}).

\begin{figure}[H]
    \centering
    \includegraphics[width=0.85\textwidth]{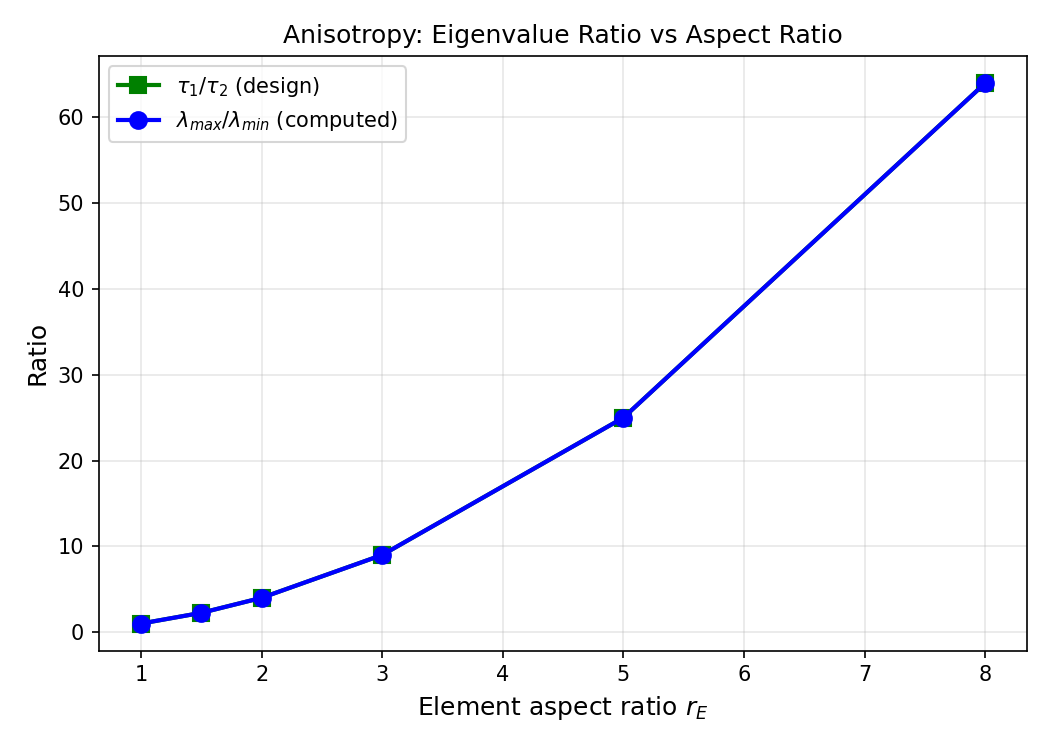}
    \caption{Anisotropic scaling of the kernel-only deviatoric stabilization. The designed directional ratio $\tau_1/\tau_2=g(r_E)^2$ is compared against the computed kernel eigenvalue ratio $\lambda_{\max}/\lambda_{\min}$ as the element aspect indicator $r_E$ varies.}
    \label{fig:modal_anisotropy}
\end{figure}

\begin{figure}[H]
    \centering
    \includegraphics[width=\textwidth]{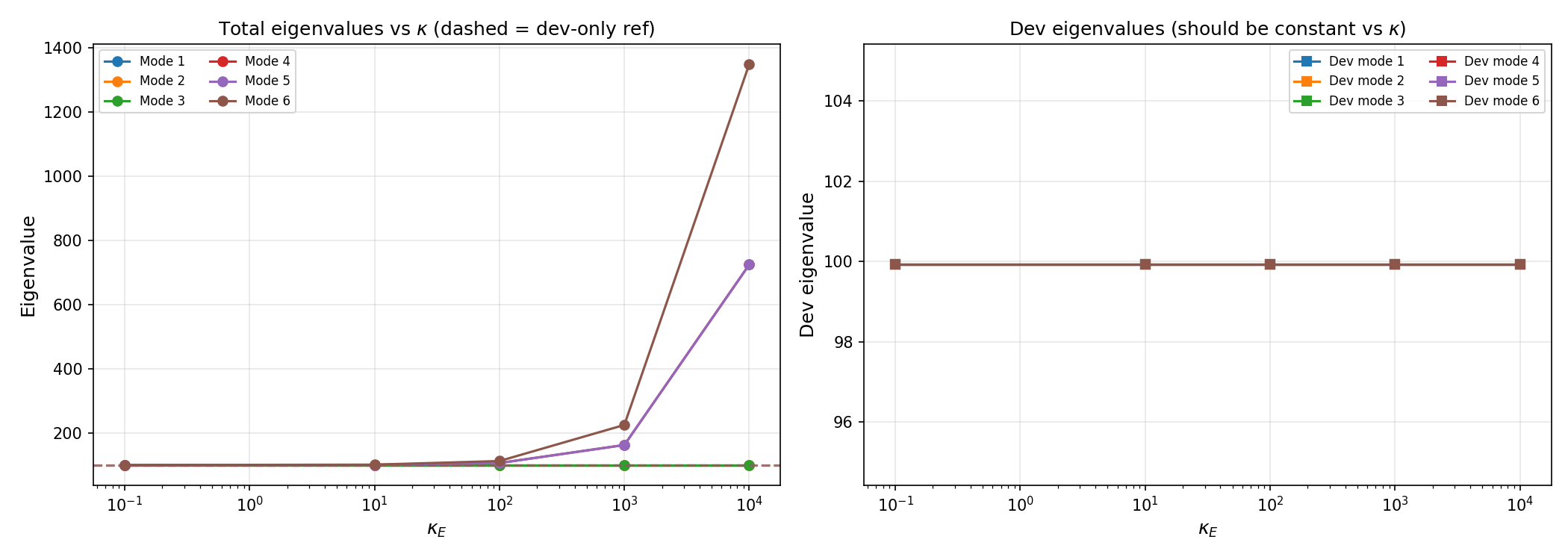}
    \caption{Deviatoric/volumetric separation on $\missingkernel$ under the decoupled stabilization. Kernel-restricted eigenvalues are shown as the bulk scale $\kappa_E$ is varied: a subset of modes remains constant (deviatoric content), whereas modes with volumetric content increase with $\kappa_E$.}
    \label{fig:modal_dev_vol}
\end{figure}

\begin{figure}[H]
    \centering
    \includegraphics[width=0.9\textwidth]{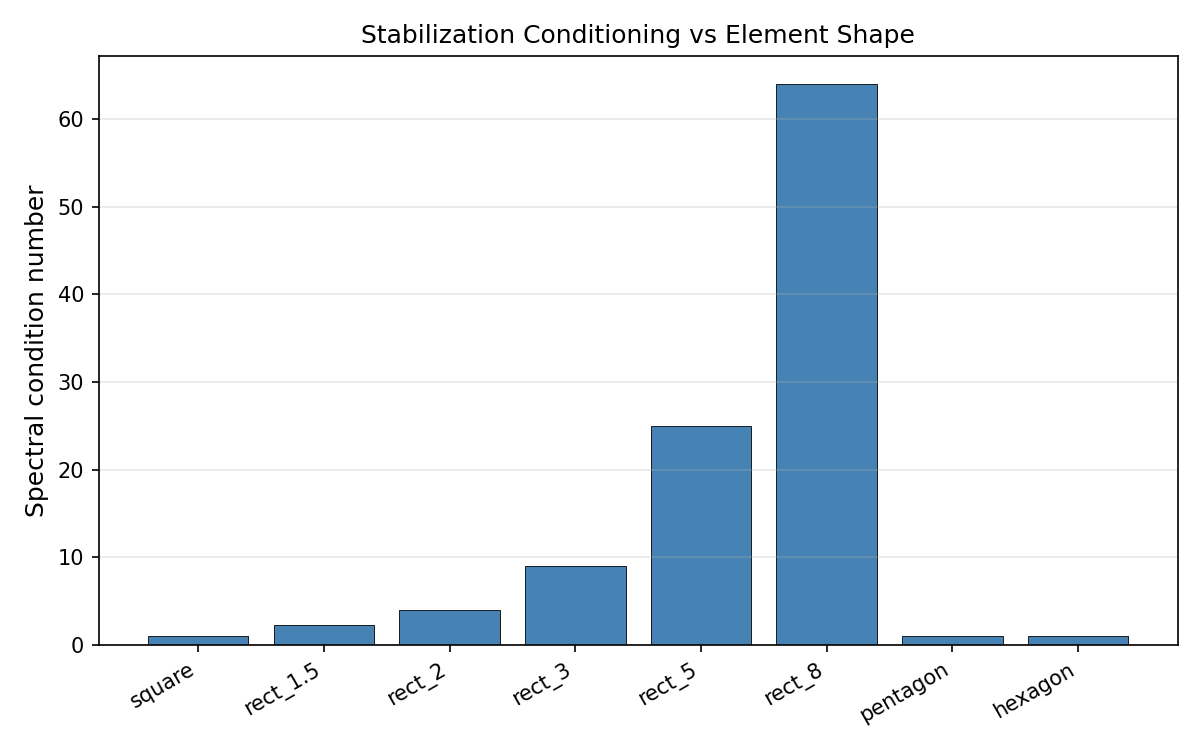}
    \caption{Conditioning of the kernel-restricted stabilization operator across element shapes. The spectral condition number of the reduced matrix $\mathbf{Z}^{\mathsf{T}}\mathbf{S}_E\mathbf{Z}$ increases predictably with rectangular anisotropy, while remaining near unity for regular polygons.}
    \label{fig:modal_conditioning}
\end{figure}

This element-level modal inspection is a concrete instance of the spectral viewpoint developed in Section~\ref{sec:spectral_analysis}. There, stability on $\missingkernel$ is formulated as the Rayleigh-quotient bound (\ref{eq:reduced_stability_criteria_eigenproblem}) induced by the inequality (\ref{eq:stability_criteria_eigenproblem}), equivalently as bounds on generalized eigenvalues of the pair $(\mathbf{K}^s,\mathbf{K}^*)$ restricted to $\missingkernel$. In the present setting, the reduced operator $\mathbf{Z}^{\mathsf{T}}\mathbf{S}_E\mathbf{Z}$ provides a basis-independent spectrum of the stabilization energy assigned to each missing-mode pattern, thereby exposing directly whether the kernel-only property is satisfied and how geometric anisotropy and the dev/vol split redistribute stiffness across modes. In this sense, the modal analysis complements the theoretical spectral criteria by making the mode-by-mode scaling of the stabilization explicit at the element level.

\subsection{Cook's membrane}

Cook's membrane is a standard benchmark for finite element formulations under large deformation and shear-dominated loading. The problem consists of a tapered cantilever panel subjected to a uniform shear traction on its free edge. The geometry, illustrated in Figure~\ref{fig:cooks_membrane_geometry}, is defined by the four corner points (in consistent length units) as in \cite{Wriggers2017}:
\begin{center}
\begin{tabular}{lcc}
\toprule
Corner & $x$ & $y$ \\
\midrule
Bottom-left & 0 & 0 \\
Bottom-right & 48 & 44 \\
Top-right & 48 & 60 \\
Top-left & 0 & 44 \\
\bottomrule
\end{tabular}
\end{center}
The panel width is 48, and the height varies from 44 on the left edge to 16 on the right edge, producing a skewed quadrilateral with non-parallel edges.

\begin{figure}[H]
    \centering
    \includegraphics[width=0.7\textwidth]{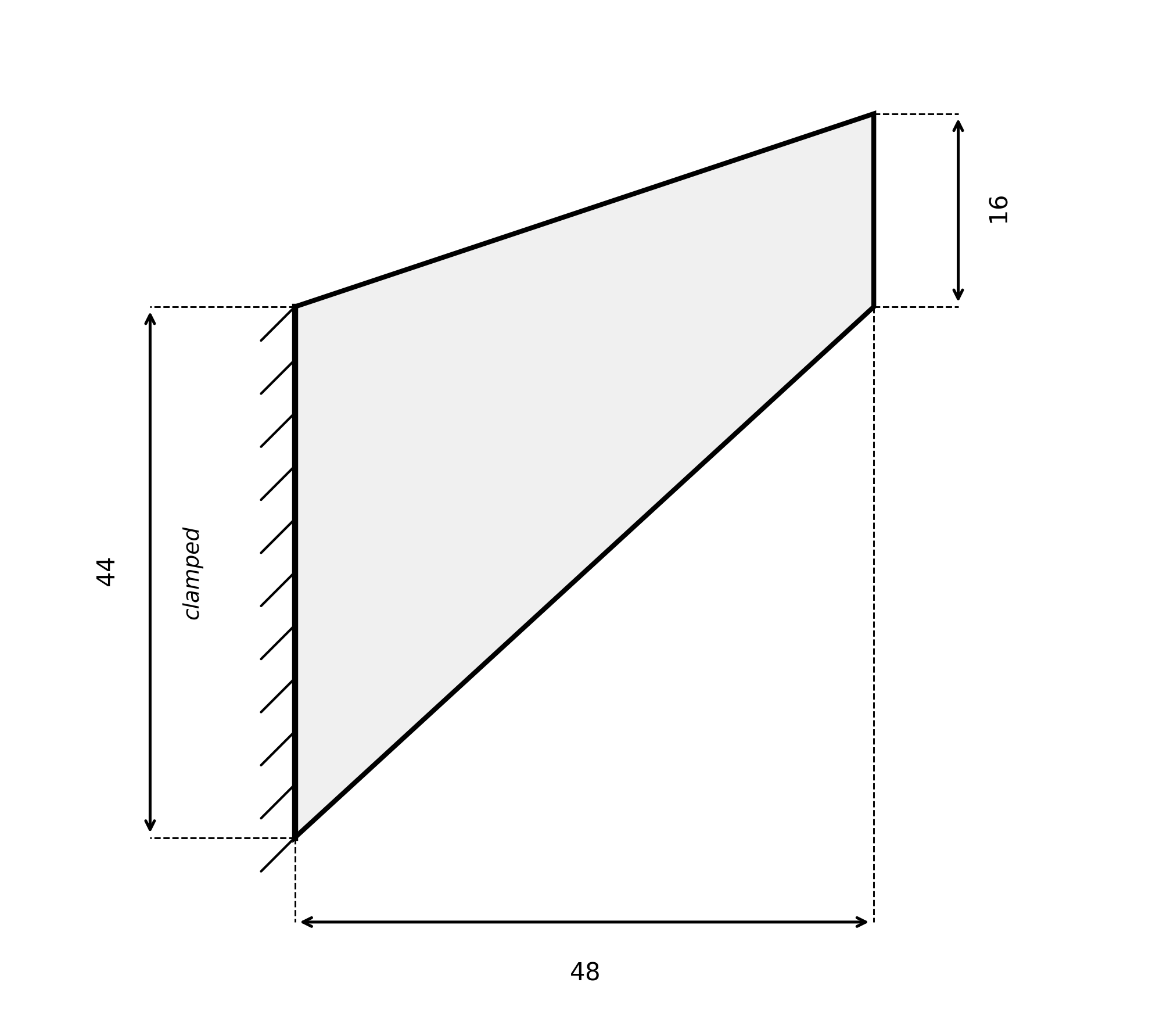}
    \caption{Geometry and boundary conditions for Cook's membrane benchmark.}
    \label{fig:cooks_membrane_geometry}
\end{figure}

The left edge ($x=0$, from $(0,0)$ to $(0,44)$) is fully clamped, i.e., $u_x = u_y = 0$ at every node. The right edge ($x=48$, from $(48,44)$ to $(48,60)$) is subjected to a constant distributed vertical traction $t_y = q_0 = 4$, yielding a total applied vertical force $F_y = q_0 \times 16 = 64$. The top and bottom edges are traction-free. The right-edge traction is integrated numerically using the midpoint rule along each boundary edge segment. The quantity of interest is the vertical displacement $u_y$ at the upper-right corner $(48,60)$.

The material is assumed to obey the compressible neo-Hookean constitutive law introduced in (\ref{eq:neo_hookean_energy}), under plane-strain conditions. The Lamé constants are set to $\mu = 40$ and $\lambda = 100$, and the distributed load is $q_0 = 4$.

Four distinct mesh families are tested, each at four refinement levels: a regular quadrilateral structured grid (\texttt{quad}), a moderately distorted quadrilateral mesh with smooth perturbations (\texttt{dist1}), a highly distorted quadrilateral mesh obtained via higher-order polygonal subdivision (\texttt{dist2}), and irregular polygonal cells generated via Voronoi tessellation (\texttt{voronoi}). Representative meshes at refinement level $h = 0.0625$ are illustrated in Figure~\ref{fig:mesh_families}. The normalized mesh parameter $h$ relates to the actual element size as $h_{\text{actual}} = h \times 48$; the refinement levels considered are summarized in Table~\ref{tab:cooks_refinement_levels}.

\begin{table}[H]
\centering
\caption{Refinement levels considered for the Cook's membrane benchmark. The normalized mesh parameter $h$ corresponds to an actual element size $h_{\text{actual}} = h \times 48$. The number of elements is reported separately for the structured/distorted quadrilateral families (\texttt{quad}, \texttt{dist1}, \texttt{dist2}) and the Voronoi family (\texttt{voronoi}).}
\label{tab:cooks_refinement_levels}
\begin{tabular}{cccc}
\toprule
$h$ (normalized) & $h_{\text{actual}}$ & Quad/dist elements & Voronoi elements \\
\midrule
0.5000 & 24.0 & 4 & 10 \\
0.2500 & 12.0 & 16 & 10 \\
0.1250 & 6.0 & 64 & 40 \\
0.0625 & 3.0 & 256 & 160 \\
\bottomrule
\end{tabular}
\end{table}

The total load is applied incrementally in 10 equal steps (load factors $0.1, 0.2, \ldots, 1.0$). At each step, the nonlinear equilibrium equations are solved via the Newton--Raphson method described in (\ref{eq:newton_linear_system}), with a maximum of 50 iterations per step, residual tolerance $\|\mathbf{R}\| < 10^{-8}$, and increment tolerance $\|\Delta\mathbf{u}\| < 10^{-10}$. Typical convergence is achieved in 4--7 Newton iterations per load step.

Two stabilization strategies are compared across all mesh types and refinement levels. The first is the classical formulation of \cite{Wriggers2017} and \cite{vanHuyssteen2020}, detailed in Section \ref{sec:classic_stab}, which evaluates a surrogate neo-Hookean energy over an internal fan triangulation with effective Lamé parameters $\hat{\mu}$ and $\hat{\lambda}$ given by (\ref{eq:mu_hat_classical}) and (\ref{eq:lambda_hat_classical}). The second is the proposed kernel-only deviatoric/volumetric decoupled stabilization introduced in Section \ref{sec:new_stab}, which decouples the shear and bulk stabilization channels and scales the deviatoric contribution exclusively by $\mu$, eliminating the dependence on $\lambda$ identified as the source of the volumetric proxy problem.

\begin{figure}[H]
    \centering
    \begin{subfigure}[b]{0.48\textwidth}
        \centering
        \includegraphics[width=\textwidth]{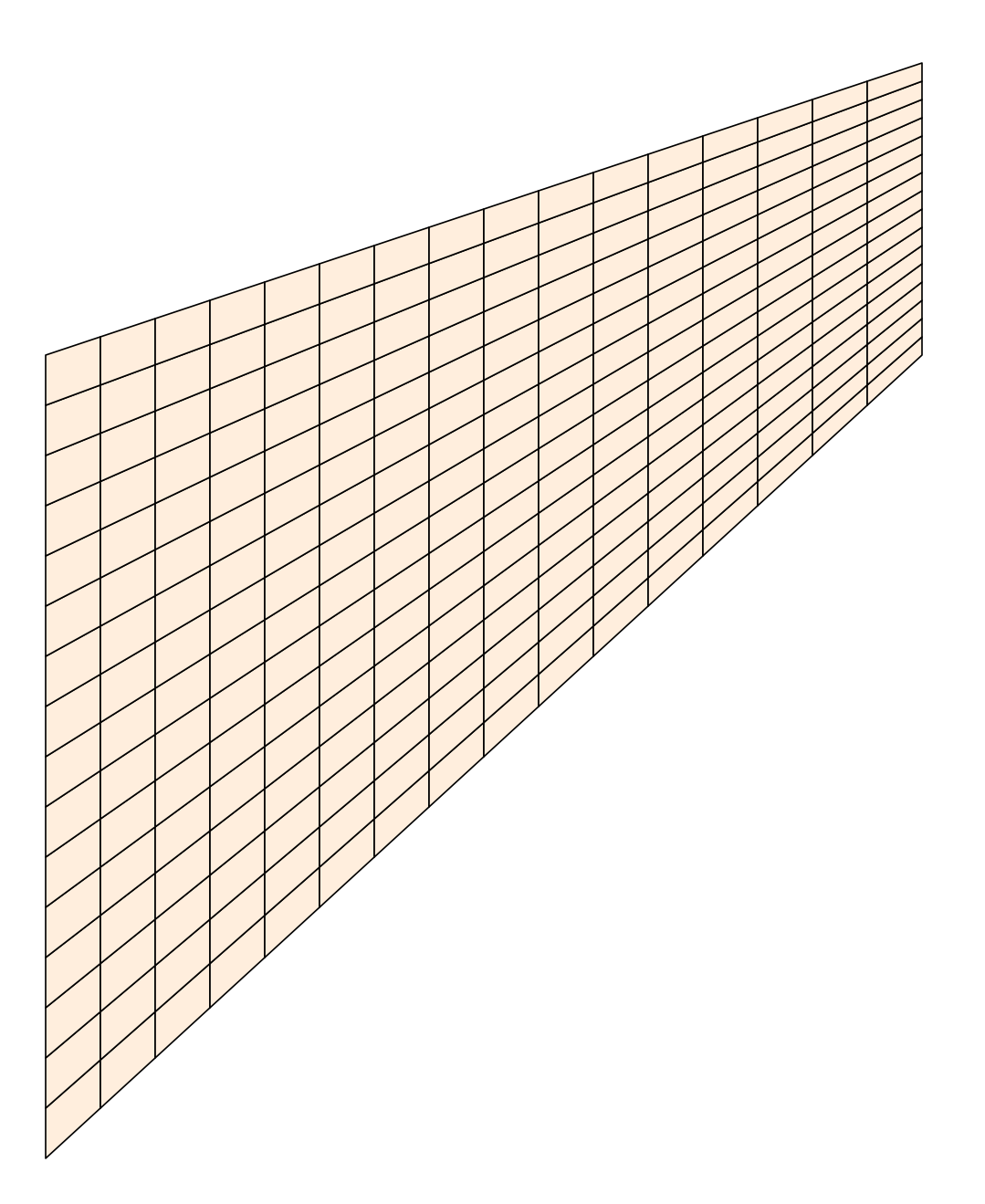}
        \caption{\texttt{Regular quadrilateral}}
    \end{subfigure}
    \hfill
    \begin{subfigure}[b]{0.48\textwidth}
        \centering
        \includegraphics[width=\textwidth]{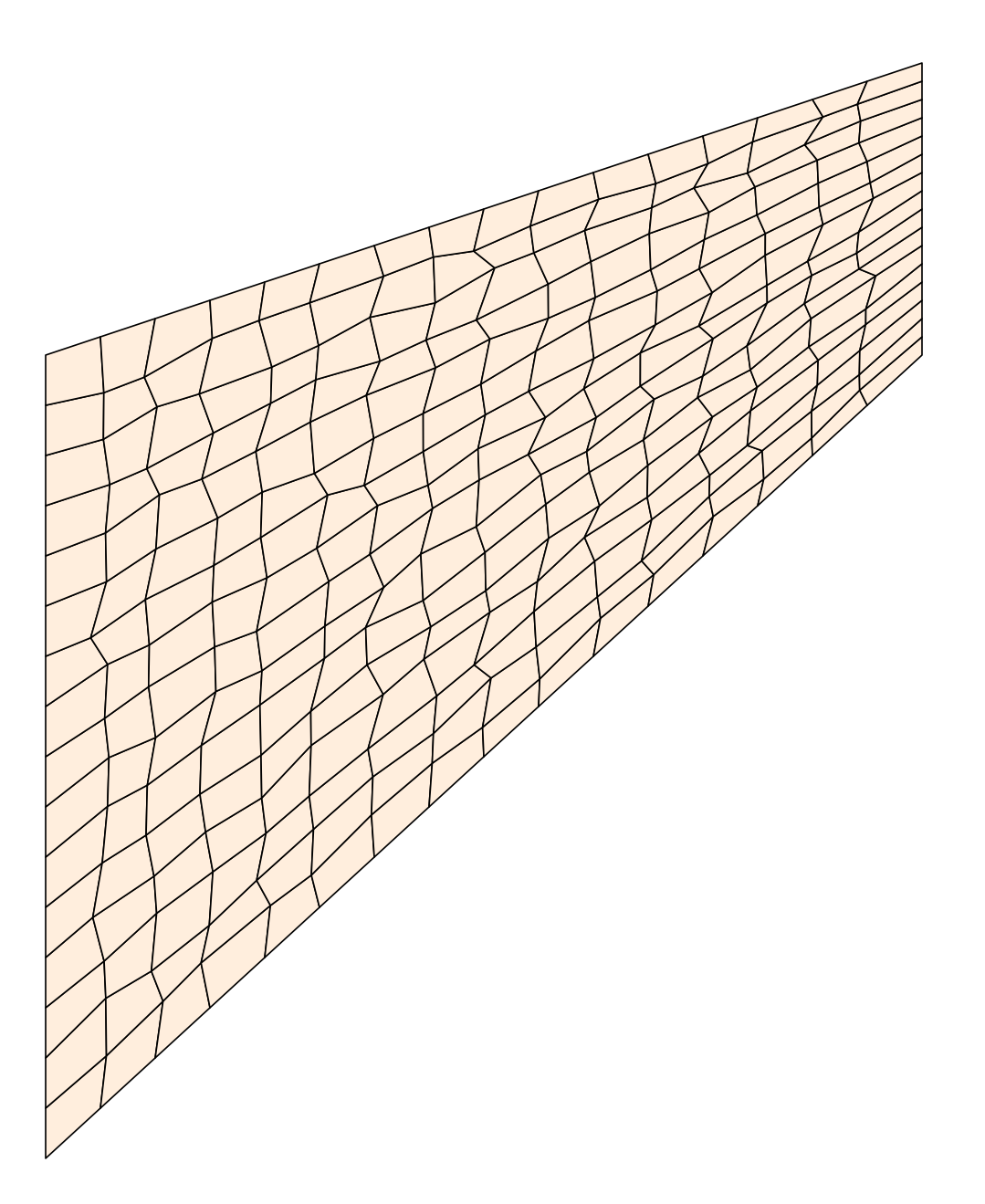}
        \caption{\texttt{Distorted quadrilateral}}
    \end{subfigure}
    \vskip\baselineskip
    \begin{subfigure}[b]{0.48\textwidth}
        \centering
        \includegraphics[width=\textwidth]{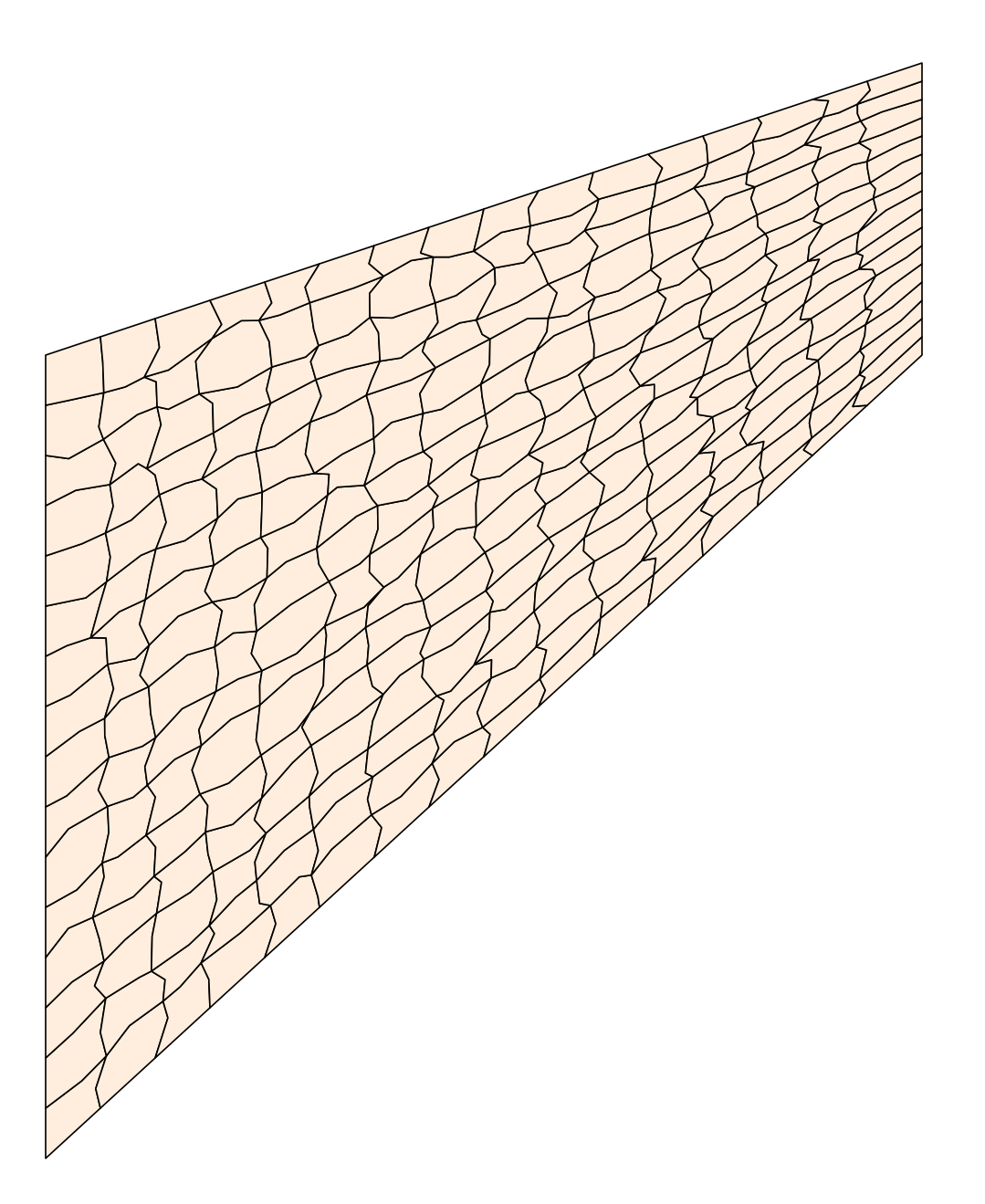}
        \caption{\texttt{Highly distorted quadrilateral}}
    \end{subfigure}
    \hfill
    \begin{subfigure}[b]{0.48\textwidth}
        \centering
        \includegraphics[width=\textwidth]{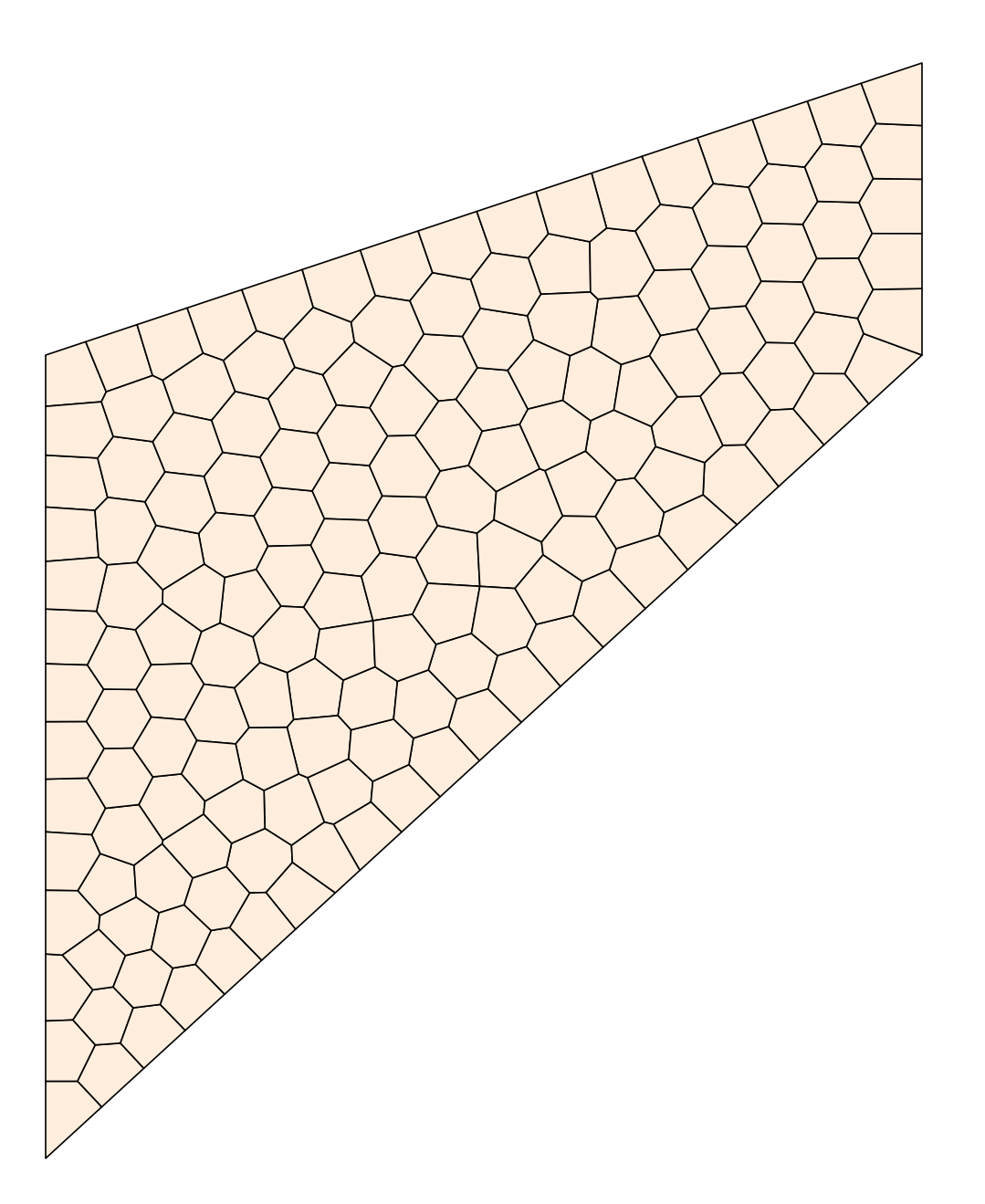}
        \caption{\texttt{Voronoi tessellation}}
    \end{subfigure}
    \caption{Representative meshes for Cook's membrane at refinement level $h = 0.0625$: (a) regular quadrilateral, (b) distorted quadrilateral (\texttt{dist1}), (c) highly distorted quadrilateral (\texttt{dist2}), and (d) Voronoi tessellation.}
    \label{fig:mesh_families}
\end{figure}

For $\nu=0.499$, the Cook's membrane results show a clear separation between a locking-free response under the decoupled term and a locking-prone response under the classic term, consistently across the four mesh families (\texttt{Regular quadrilateral}, \texttt{Distorted quadrilateral}, \texttt{Highly distorted quadrilateral}, and \texttt{Voronoi tessellation}). In all cases both formulations complete the prescribed loading history, so that the distinguishing feature is not solver robustness but the asymptotic load--displacement response and its mesh dependence in the nearly incompressible regime. Figures~\ref{fig:quad_comparison}, \ref{fig:dist1_comparison}, \ref{fig:dist2_comparison}, and~\ref{fig:voronoi_comparison} display the deformed state for both stabilization terms on each mesh family and for different mesh sizes.

Under the decoupled term, the tip displacement $u_y$ approaches a narrow band (approximately $8.5$) as the mesh is refined, indicating a stable nearly incompressible limit with mild mesh sensitivity. The structured families (\texttt{Regular quadrilateral} and \texttt{Distorted quadrilateral}) approach this value from below ($8.01 \to 8.48$ and $7.79 \to 8.51$, respectively), while the Voronoi family exhibits a small non-monotone transient on the coarsest meshes ($9.02 \to 8.33$) before settling and converging to the same fine-mesh value ($8.52 \to 8.53$). The highly distorted family (\texttt{Highly distorted quadrilateral}) approaches the same range from above ($10.67 \to 8.99$), which is consistent with the fact that strong geometric distortion can produce a comparatively compliant coarse response that is reduced as resolution increases and the deformation field is better represented. Overall, the decoupled term yields a coherent refinement picture: despite markedly different element geometries, the fine meshes across families cluster around a common response, with the expected coarse-mesh variability attributable to geometric irregularity and low-resolution tessellation effects.

In contrast, the classic term exhibits the characteristic signature of volumetric locking at $\nu=0.499$: the predicted tip displacement is severely reduced on coarse meshes and converges only slowly toward the decoupled response as the mesh is refined. For \texttt{Regular quadrilateral} and \texttt{Distorted quadrilateral}, the classic values increase from $u_y \approx 3$ on the coarsest mesh to $u_y \approx 7.3$--$7.5$ on the finest mesh, remaining substantially below the corresponding decoupled values ($\approx 8.5$). The Voronoi family follows the same trend ($5.11 \to 7.83$), with a larger coarse-mesh displacement than \texttt{Regular quadrilateral}/\texttt{Distorted quadrilateral} but still a persistent gap on refinement. The \texttt{Highly distorted quadrilateral} family shows the least severe discrepancy at the finest level ($8.51$ versus $8.99$), yet still displays the same slow approach from a markedly underpredicted coarse response ($3.66 \to 8.51$). These patterns indicate that, for the classic term, the nearly incompressible response is controlled by an artificial stiffness component that is strongly mesh-dependent and only gradually mitigated by refinement---precisely the behavior associated with locking.

The mechanism is consistent with the scaling embedded in the classic term, where a volumetric proxy is permitted to enter the deviatoric stabilization channel via an incompressibility factor $\alpha$ and an inflated effective shear modulus $\hat{\mu} = (1+\alpha)^2 \Phi \mu$ defined in (\ref{eq:mu_hat_classical}). In the nearly incompressible regime, $\lambda$ becomes very large relative to $\mu$, so any construction of the type $\alpha = \alpha(\lambda, \mu)$ yields a large multiplicative amplification in $\hat{\mu}$. As a consequence, the stabilization does not merely control the projector-kernel modes at a shear-consistent energy level; rather, it injects volumetric-driven stiffness into the shear-like kernel penalty, over-constraining the unresolved modes and suppressing deformation. The observed mesh dependence follows naturally: when the stabilization dominates, the computed response becomes sensitive to geometric irregularity and to how the missing-mode content manifests on each mesh family, producing the wide spread on coarse meshes and the slow, locking-controlled convergence toward the limiting response.

\begin{figure}[H]
    \centering
    \includegraphics[width=\textwidth]{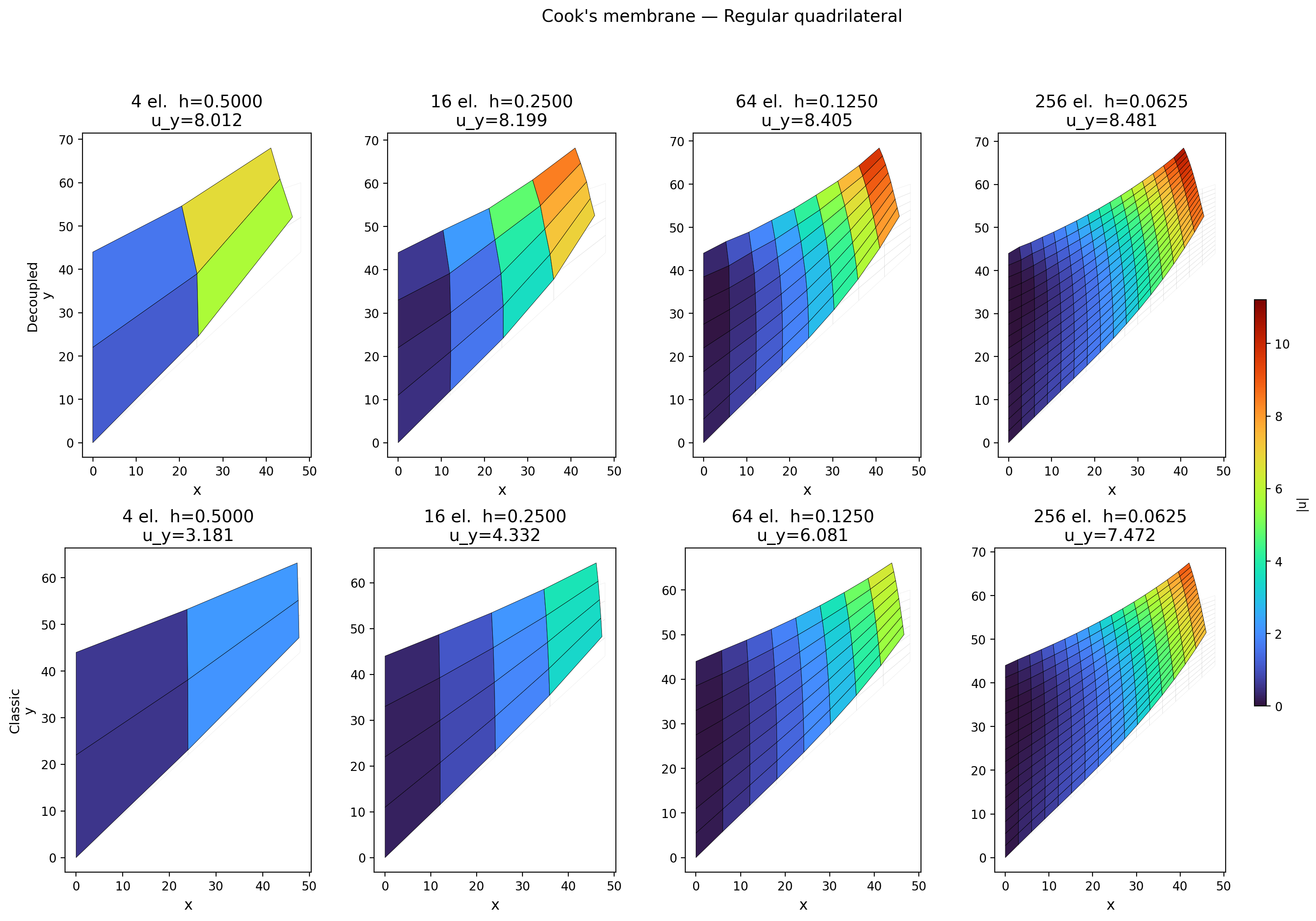}
    \caption{Cook's membrane on the \texttt{Regular quadrilateral} mesh family and nearly-incompressible regime ($\nu = 0.499$). Each column corresponds to a refinement level ($h = 0.5000,\, 0.2500,\, 0.1250,\, 0.0625$); the top row shows results with the proposed \emph{decoupled term} and the bottom row with the \emph{classic term}. The color field represents the displacement magnitude $|\mathbf{u}|$, and the tip displacement $u_y$ at the upper-right corner is reported above each panel. For this mesh family, the decoupled term exhibits a smooth increase toward the fine-mesh response ($u_y: 8.012 \to 8.481$), whereas the classic term yields markedly lower values on coarse meshes and approaches the decoupled response only gradually under refinement ($u_y: 3.181 \to 7.472$), consistent with pronounced locking effects in the nearly incompressible regime.}
    \label{fig:quad_comparison}
\end{figure}

\begin{figure}[H]
    \centering
    \includegraphics[width=\textwidth]{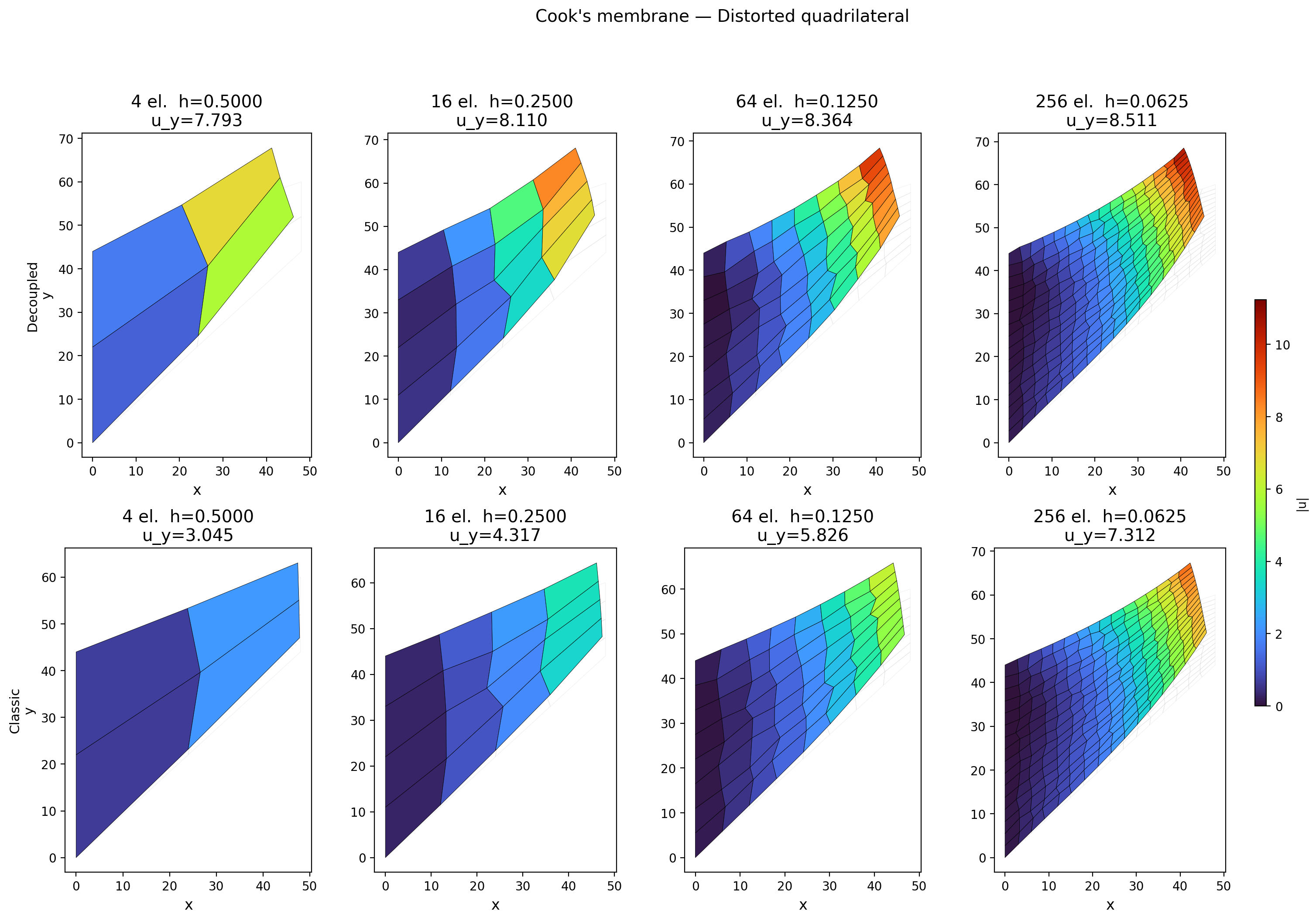}
    \caption{Cook's membrane on the \texttt{Distorted quadrilateral} mesh family and nearly-incompressible regime ($\nu = 0.499$). The decoupled term approaches a mesh-independent tip displacement under refinement ($u_y: 7.793 \to 8.511$). The classic term remains significantly lower on coarse meshes and converges slowly toward the decoupled response ($u_y: 3.045 \to 7.312$), indicating that moderate geometric distortion does not remove the locking footprint of the classic stabilization at $\nu=0.499$.}
    \label{fig:dist1_comparison}
\end{figure}

\begin{figure}[H]
    \centering
    \includegraphics[width=\textwidth]{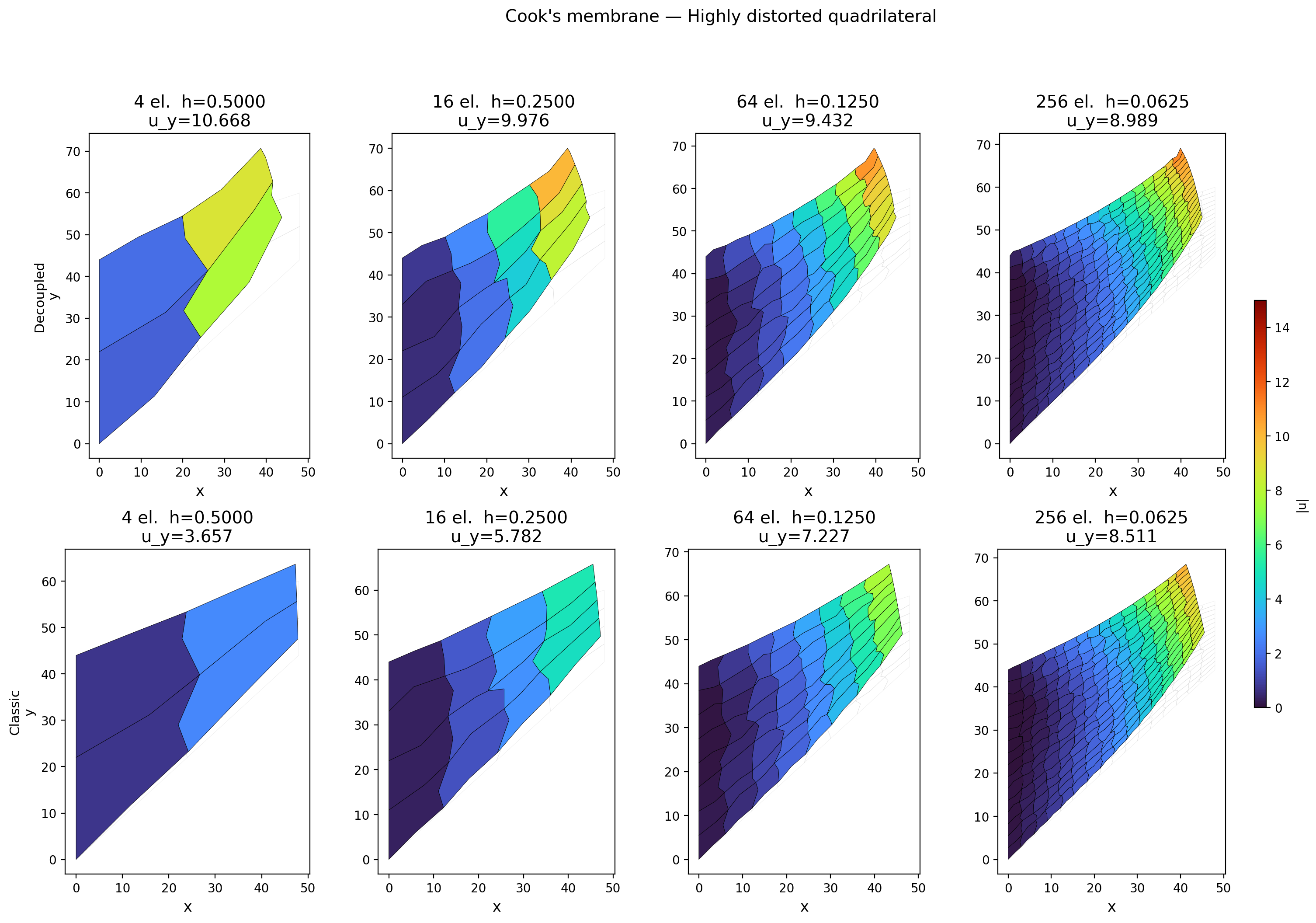}
    \caption{Cook's membrane on the \texttt{Highly distorted quadrilateral} mesh family and nearly-incompressible regime ($\nu = 0.499$). Due to the strong geometric distortion, the decoupled term exhibits an elevated coarse-mesh response that decreases with refinement and approaches the fine-mesh range ($u_y: 10.668 \to 8.989$). The classic term again underpredicts the tip displacement on coarse meshes and recovers only gradually under refinement ($u_y: 3.657 \to 8.511$), showing that severe distortion does not eliminate the locking-dominated behavior of the classic stabilization.}
    \label{fig:dist2_comparison}
\end{figure}

\begin{figure}[H]
    \centering
    \includegraphics[width=\textwidth]{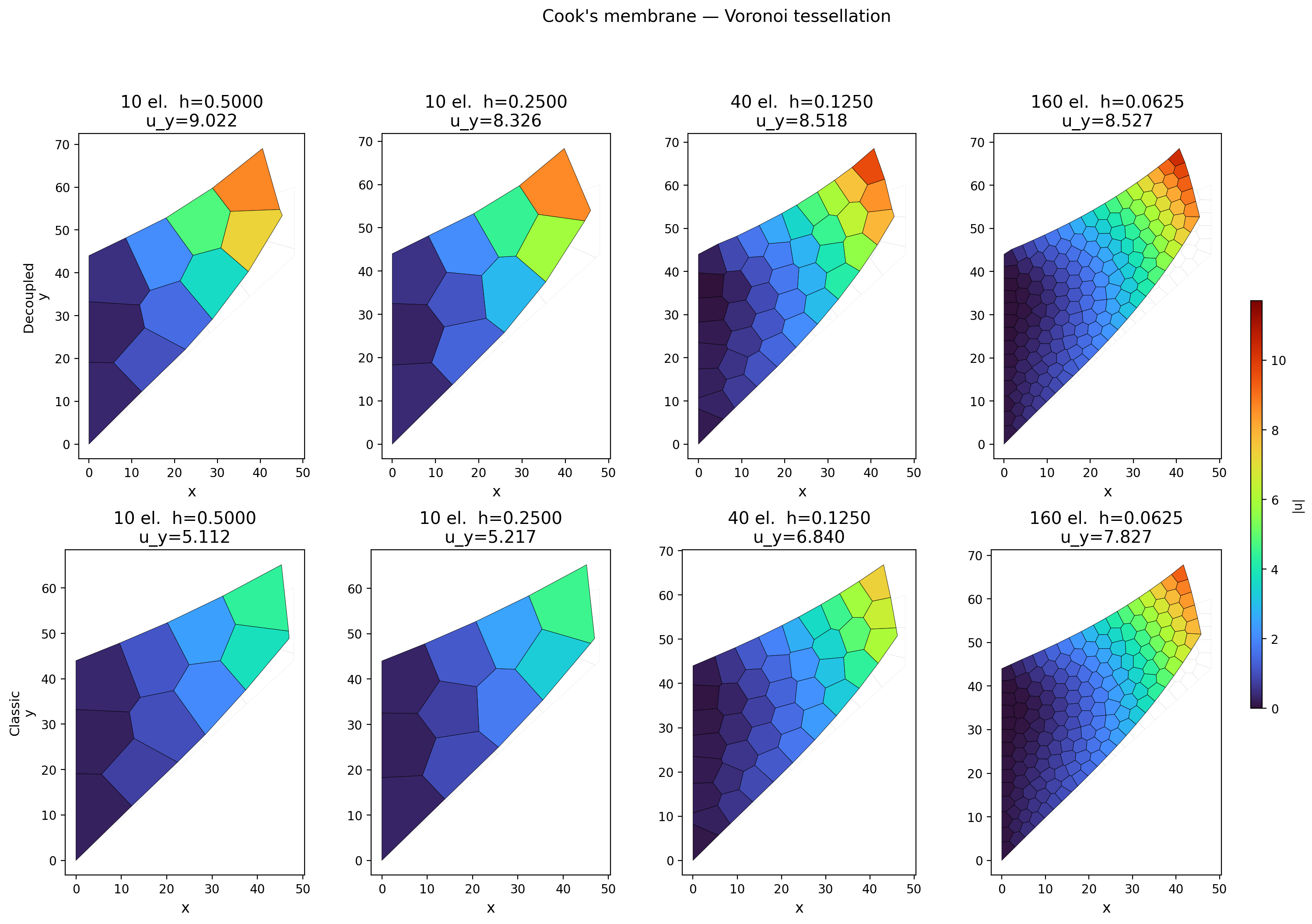}
    \caption{Cook's membrane on the \texttt{Voronoi tessellation} mesh family and nearly-incompressible regime ($\nu = 0.499$). The decoupled term exhibits a non-monotone transient on the coarsest levels ($u_y: 9.022 \to 8.326$) but converges to a consistent fine-mesh tip displacement ($u_y: 8.518 \to 8.527$). The classic term also completes the loading history on this mesh family, yet yields substantially smaller displacements on coarse meshes and remains below the decoupled response at comparable refinements ($u_y: 5.112 \to 7.827$), indicating a persistent locking-induced stiffening that is relieved only slowly by mesh refinement.}
    \label{fig:voronoi_comparison}
\end{figure}

The tip-displacement convergence plot in Figure~\ref{fig:cooks_convergence} provides a compact refinement summary of the nearly incompressible response at $\nu=0.499$ across all four mesh families. Using the dashed line $u_y^{\mathrm{ref}}=8.481$ as a common reference, the decoupled term exhibits a rapid collapse of the curves toward the reference level with only mild mesh-dependent transients (most notably on the coarsest Voronoi and highly distorted meshes), whereas the classic term displays a pronounced locking footprint: substantial underprediction on coarse meshes and a slow, strongly mesh-dependent recovery as $h$ decreases. The classic curves remain well below the reference over most of the refinement range and approach it only on the finest meshes, while the decoupled curves lie close to the reference already at moderate resolutions. This figure therefore complements the deformed-shape comparisons by isolating the principal effect of stabilization in the nearly incompressible regime---the difference in refinement behavior and asymptotic bias---without conflating it with solver termination or load-step completion.

\begin{figure}[H]
    \centering
    \includegraphics[width=\textwidth]{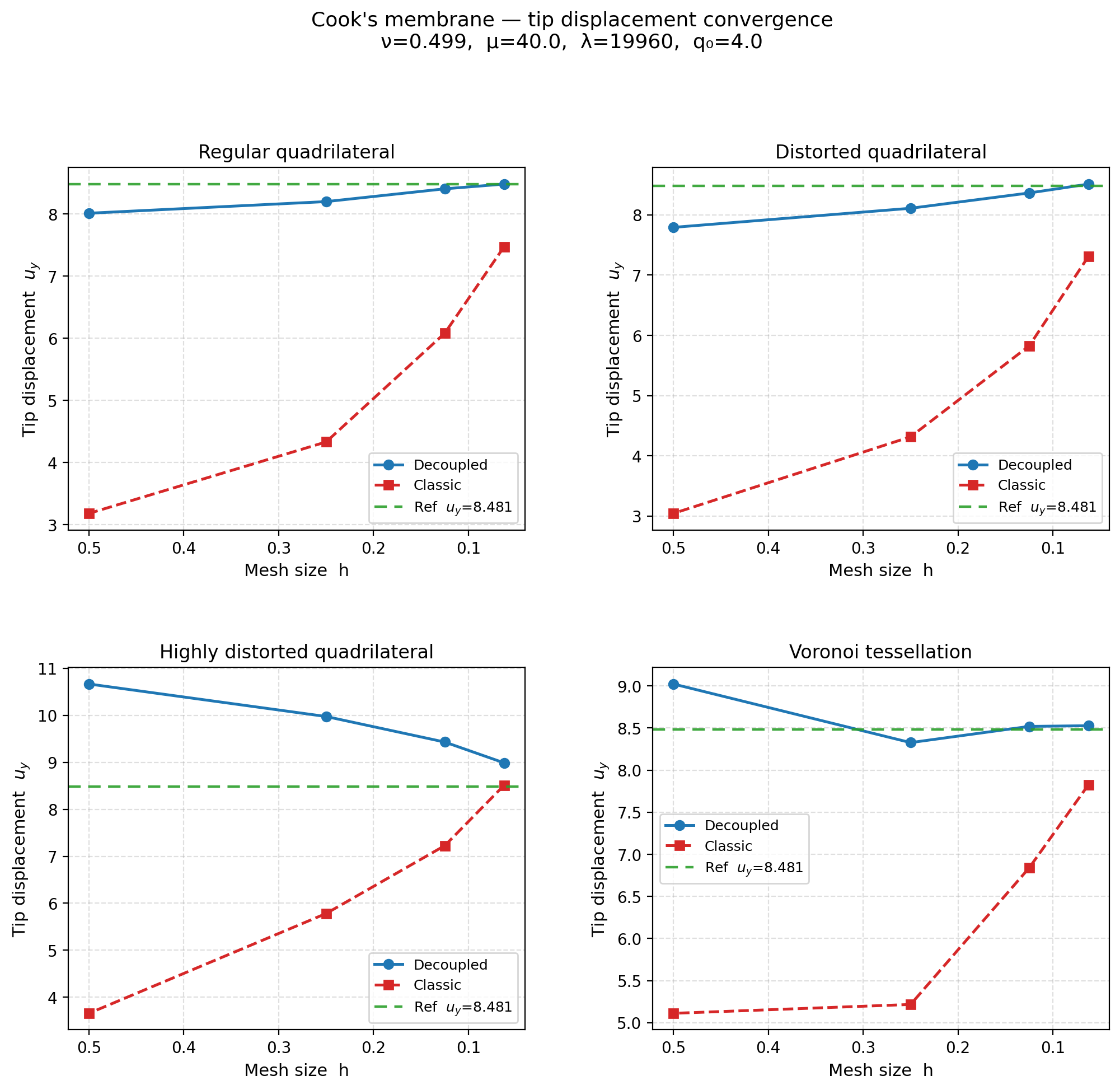}
    \caption{Cook's membrane—tip displacement convergence at \(\nu=0.499\) (\(\mu=40\), \(\lambda=19960\), \(q_0=4\)). Tip displacement \(u_y\) at the upper-right corner is reported versus mesh size \(h\) for the four mesh families. The \emph{decoupled term} (blue) clusters rapidly toward a common fine-mesh response, while the \emph{classic term} (red) underpredicts \(u_y\) on coarse meshes and approaches the limiting value only gradually under refinement, consistent with volumetric locking. The dashed line indicates the reference value \(u_y^{\mathrm{ref}}=8.481\) taken from the finest regular quadrilateral result.}

    \label{fig:cooks_convergence}
\end{figure}

\section{Conclusions}
\label{sec:conclusions}

This work addressed the role of stabilization in finite-strain virtual element methods (VEM), with emphasis on the requirement that the stabilization term act only on unresolved modes and scale like the relevant (state-dependent) tangent energy on $\missingkernel$. In the finite-deformation setting, common stabilization strategies evaluate nonlinear surrogate energies on auxiliary sub-triangulations and introduce effective Lam\'e-type scalings. While these constructions provide practical robustness, they may also couple deviatoric and volumetric effects through bulk-dependent proxies entering shear-type penalties and may retain non-vanishing volumetric penalties on $\missingkernel$ in the nearly incompressible limit. These effects motivate stabilization designs that are explicitly mode-aware and that separate deviatoric and volumetric mechanisms in the stabilization channel.

The first contribution was a spectral framing of stabilization requirements on $\missingkernel$. Spectral equivalence of the stabilization bilinear form and the target tangent energy on $\missingkernel$ was recast as a generalized Rayleigh-quotient bound and, equivalently, as bounds on generalized eigenvalues associated with kernel-restricted operators. This provides a basis-independent diagnostic for whether stabilization assigns appropriate energies to unresolved deformation patterns. In the finite-strain setting, the analysis clarifies why kernel-restricted spectra are the natural objects to monitor: when the consistent energy depends on the displacement only through the projector, its second variation vanishes on $\missingkernel$, so that the stabilization channel is responsible for assigning stiffness to missing modes at the Newton state.

The second contribution was the construction of a kernel-only deviatoric/volumetric decoupled stabilization that is submesh-free and admits closed-form tangents. The proposed stabilization is built from the projector residual evaluated at the degrees of freedom, ensuring kernel consistency by construction. The deviatoric part is scaled purely by a shear measure $\mu_E$ and incorporates a geometry-driven principal-frame weighting with bounded anisotropy factors, thereby enabling directional stiffness assignment without introducing bulk-dependent modifications of the shear scaling. The volumetric part is formulated as a boundary-only penalty on normal residual components with an explicit bulk scale $\kappa_E$, which can be capped or suppressed as $\nu\to 1/2$ to avoid residual compressibility on $\missingkernel$. Under standard polygon regularity assumptions, the main scaling result shows that the deviatoric stabilization is equivalent to $\mu_E|\cdot|_{1,E}^2$ on $\missingkernel$ with constants independent of $h_E$ and of $\nu$, providing the stability foundation required by the canonical VEM yardstick.

The numerical experiments supported these design objectives at both element and problem levels. A single-element isochoric kernel-mode diagnostic demonstrated that a classical surrogate-based stabilization assigns increasing energy to an isochoric missing mode as $\nu\to 1/2$ when energies are normalized by the physical shear scale, indicating bulk-driven stiffening of unresolved shear-type patterns. In contrast, the proposed decoupled stabilization remained essentially constant under the same normalization, consistent with shear-only scaling on isochoric kernel modes. A kernel-restricted modal analysis further confirmed the intended properties: exact nullity on polynomial modes, directional redistribution of kernel stiffness consistent with the prescribed anisotropy weights, and a clear deviatoric/volumetric separation under variations of $\kappa_E$. Finally, in Cook's membrane under nearly incompressible conditions, the decoupled stabilization produced a coherent refinement picture across diverse mesh families, whereas the classical stabilization exhibited a locking-like footprint with pronounced underprediction on coarse meshes and slow recovery under refinement.

Overall, the results indicate that stabilization design in finite-strain VEM benefits from treating stabilization as an energy-assignment mechanism on $\missingkernel$, guided by spectral equivalence to the target tangent energy and by explicit deviatoric/volumetric separation. The proposed kernel-only decoupled stabilization satisfies these principles while avoiding auxiliary sub-triangulations and bulk-dependent shear inflation, and the numerical evidence suggests improved robustness on distorted polygonal meshes in the nearly incompressible regime.

Several extensions are natural. The present analysis focused on the lowest-order setting and on polygonal elements under plane strain; extensions to higher-order VEM spaces, three-dimensional polyhedra, and more general hyperelastic models follow the same design philosophy but require additional projection operators and corresponding kernel characterizations. A further direction is to refine the mode-aware weighting using inexpensive geometric proxies tailored to extreme anisotropies, while preserving boundedness and uniform-in-$\nu$ stability constants. Finally, integrating the kernel spectral diagnostics into automated parameter selection may provide a practical route to stabilization tuning that is robust across element shapes and loading paths.

\newpage

\bibliographystyle{unsrt}  


\newpage

\appendix
\section{Functional analysis overview}
\label{ap:functional_analysis}

\paragraph{}This appendix summarizes the functional-analytic setting and notation used throughout the paper, including the geometric regularity assumptions required for trace operators and boundary fractional norms, and the Sobolev spaces employed in the stability and scaling estimates. Unless stated otherwise, all function spaces are defined over $\mathbb{R}$. Vector-valued spaces are understood componentwise; for instance, $[H^1(\Omega)]^2$ denotes the Cartesian product of two copies of $H^1(\Omega)$ equipped with the natural product norm and seminorm. Standard results and conventions are adopted, and the presentation follows the references \cite{evans10,brezis11}.

\subsection{Lipschitz domains and boundary regularity}

\paragraph{}Several functional-analytic tools used in this work (trace operators, Poincar\'e--type inequalities, and fractional Sobolev spaces on boundaries) require mild regularity assumptions on the geometry of the underlying domain. Throughout, $\Omega\subset\mathbb{R}^d$ denotes a bounded open set. The boundary regularity needed is expressed by the notion of a Lipschitz domain.

\paragraph{}A function $\phi:\mathbb{R}^{d-1}\to\mathbb{R}$ is called Lipschitz on an open set $U\subset\mathbb{R}^{d-1}$ if there exists a constant $L\ge 0$ such that
\begin{equation}
|\phi(\mathbf{x}')-\phi(\mathbf{y}')|
\le
L\,|\mathbf{x}'-\mathbf{y}'|
\qquad
\forall\,\mathbf{x}',\mathbf{y}'\in U,
\end{equation}
where $|\cdot|$ denotes the Euclidean norm in $\mathbb{R}^{d-1}$. Equivalently, $\phi$ is globally Lipschitz on $U$ if it has bounded slope in the metric sense; in particular, Rademacher's theorem implies that $\phi$ is differentiable almost everywhere and satisfies $|\nabla\phi|\le L$ almost everywhere on $U$.

\paragraph{}A bounded open set $\Omega\subset\mathbb{R}^d$ is called a Lipschitz domain if for every point $\mathbf{x}_0\in\partial\Omega$ there exist a radius $r>0$, a rigid motion of $\mathbb{R}^d$ (translation and rotation), and a Lipschitz function $\phi:U\to\mathbb{R}$ defined on an open set $U\subset\mathbb{R}^{d-1}$ such that, in the transformed coordinates $\mathbf{x}=(\mathbf{x}',x_d)\in\mathbb{R}^{d-1}\times\mathbb{R}$ with $\mathbf{x}_0$ mapped to the origin, the boundary portion $\partial\Omega\cap B(\mathbf{0},r)$ is represented as the graph
\begin{equation}
\partial\Omega\cap B(\mathbf{0},r)
=
\left\{
(\mathbf{x}',x_d)\in B(\mathbf{0},r)\,:\, x_d=\phi(\mathbf{x}')
\right\},
\end{equation}
and $\Omega$ lies on one side of this graph, namely
\begin{equation}
\Omega\cap B(\mathbf{0},r)
=
\left\{
(\mathbf{x}',x_d)\in B(\mathbf{0},r)\,:\, x_d>\phi(\mathbf{x}')
\right\}
\quad
\text{or}
\quad
\Omega\cap B(\mathbf{0},r)
=
\left\{
(\mathbf{x}',x_d)\in B(\mathbf{0},r)\,:\, x_d<\phi(\mathbf{x}')
\right\}.
\end{equation}
Thus, near each boundary point, $\partial\Omega$ can be written as the graph of a Lipschitz function, and $\Omega$ is locally the epigraph (or hypograph) of that function.

\paragraph{}In two dimensions, polygonal domains are Lipschitz. In particular, each polygonal element $E$ of a partition $\mathcal{T}_h$ is a bounded Lipschitz domain, and its boundary $\partial E$ is a one-dimensional Lipschitz manifold. This regularity suffices for the existence of a continuous trace operator $\gamma:H^1(\Omega)\to H^{1/2}(\partial\Omega)$ and for standard trace inequalities, which are invoked in the analysis of boundary-based stabilization terms.

\paragraph{}A family of polygonal meshes is termed shape-regular if it satisfies uniform geometric constraints such as (M1)--(M4) in Assumption~\ref{assumption:regularity}. These assumptions provide uniform control of constants in inequalities used throughout the paper (e.g., Poincar\'e, trace, and discrete norm equivalences), independently of the mesh size parameter $h$.

\subsection{Sobolev spaces}

\paragraph{}Let $\Omega\subset\mathbb{R}^d$ be a bounded Lipschitz domain with boundary $\partial\Omega$, and let $k\in\mathbb{N}_0$. The Sobolev space $H^k(\Omega)$ is defined as
\begin{equation}
H^k(\Omega)
=
\left\{
v\in L^2(\Omega)\,:\, D^\alpha v \in L^2(\Omega)\ \text{for all multi-indices }\alpha\ \text{with }|\alpha|\le k
\right\},
\end{equation}
where $D^\alpha$ denotes the weak derivative associated with the multi-index $\alpha$. The standard Sobolev norm and seminorm are given by
\begin{equation}
\|v\|_{H^k(\Omega)}^2
=
\sum_{|\alpha|\le k}\|D^\alpha v\|_{L^2(\Omega)}^2,
\qquad
|v|_{H^k(\Omega)}^2
=
\sum_{|\alpha|= k}\|D^\alpha v\|_{L^2(\Omega)}^2.
\end{equation}
In particular, for $k=1$,
\begin{equation}
\|v\|_{H^1(\Omega)}^2
=
\|v\|_{L^2(\Omega)}^2+\|\nabla v\|_{L^2(\Omega)}^2,
\qquad
|v|_{H^1(\Omega)}^2
=
\|\nabla v\|_{L^2(\Omega)}^2.
\end{equation}

\paragraph{}The space $H_0^1(\Omega)$ is defined as the closure of $C_0^\infty(\Omega)$ in $H^1(\Omega)$, equivalently as
\begin{equation}
H_0^1(\Omega)
=
\left\{
v\in H^1(\Omega)\,:\, \gamma(v)=0\ \text{on }\partial\Omega
\right\},
\end{equation}
where $\gamma:H^1(\Omega)\to H^{1/2}(\partial\Omega)$ denotes the trace operator, which is continuous for Lipschitz domains.

\paragraph{}In the analysis of elementwise stabilization terms, local Sobolev seminorms and boundary norms are employed. For a polygonal element $E\subset\mathbb{R}^2$ and a (scalar) function $v\in H^1(E)$, define the $L^2(E)$ norm and the $H^1(E)$ seminorm by
\begin{equation}
\|v\|_{0,E}^2
=
\int_E |v|^2\,dE,
\qquad
|v|_{1,E}^2
=
\int_E |\nabla v|^2\,dE.
\end{equation}
For a vector field $\mathbf{v}\in [H^1(E)]^2$, the corresponding quantities are defined componentwise,
\begin{equation}
\|\mathbf{v}\|_{0,E}^2
=
\int_E |\mathbf{v}|^2\,dE,
\qquad
|\mathbf{v}|_{1,E}^2
=
\int_E |\nabla \mathbf{v}|^2\,dE
=
\sum_{j=1}^2 \int_E |\nabla v_j|^2\,dE,
\end{equation}
where $|\cdot|$ denotes the Euclidean norm and $\nabla\mathbf{v}$ is the matrix of first derivatives.

\paragraph{}On the boundary $\partial E$, the $L^2(\partial E)$ norm is defined by
\begin{equation}
\|\mathbf{v}\|_{0,\partial E}^2
=
\int_{\partial E} |\mathbf{v}|^2\,ds.
\end{equation}
When required, the trace of $\mathbf{v}\in[H^1(E)]^2$ is understood in the sense of the continuous trace operator $\gamma:[H^1(E)]^2\to[H^{1/2}(\partial E)]^2$. In particular, trace inequalities of the form
\begin{equation}
\|\mathbf{v}\|_{0,\partial E}^2
\le
C\left(
h_E^{-1}\|\mathbf{v}\|_{0,E}^2
+
h_E\,|\mathbf{v}|_{1,E}^2
\right)
\end{equation}
are invoked, where $h_E=\mathrm{diam}(E)$ and $C$ depends only on the shape-regularity parameters of the admissible polygon family.

\subsection{Fractional Sobolev spaces on the boundary}

\paragraph{}Let $\Gamma\subset\mathbb{R}^d$ be a Lipschitz $(d-1)$-dimensional manifold (in particular, $\Gamma=\partial\Omega$ or $\Gamma=\partial E$ for a polygonal element $E$). The fractional Sobolev space $H^{1/2}(\Gamma)$ may be characterized by the Gagliardo seminorm. For $g\in L^2(\Gamma)$, define
\begin{equation}
|g|_{H^{1/2}(\Gamma)}^2
=
\int_{\Gamma}\int_{\Gamma}\frac{|g(\mathbf{p})-g(\mathbf{q})|^2}{|\mathbf{p}-\mathbf{q}|^{d}}\,ds_{\mathbf{p}}\,ds_{\mathbf{q}}.
\end{equation}
Then
\begin{equation}
H^{1/2}(\Gamma)
=
\left\{
g\in L^2(\Gamma)\,:\, |g|_{H^{1/2}(\Gamma)}<\infty
\right\},
\end{equation}
and a norm on $H^{1/2}(\Gamma)$ is given by
\begin{equation}
\|g\|_{H^{1/2}(\Gamma)}^2
=
\|g\|_{L^2(\Gamma)}^2+|g|_{H^{1/2}(\Gamma)}^2.
\end{equation}

\paragraph{}In the two-dimensional setting $d=2$ used in this work, $\Gamma$ is one-dimensional and the exponent in the Gagliardo seminorm reduces to $d=2$, yielding
\begin{equation}
|g|_{H^{1/2}(\Gamma)}^2
=
\int_{\Gamma}\int_{\Gamma}\frac{|g(\mathbf{p})-g(\mathbf{q})|^2}{|\mathbf{p}-\mathbf{q}|^{2}}\,ds_{\mathbf{p}}\,ds_{\mathbf{q}}.
\end{equation}
When $\Gamma=\partial E$ for a polygonal element $E$, this coincides with the intrinsic boundary seminorm employed in Lemma~\ref{lemma:difference_inequality}.

\paragraph{}The trace operator $\gamma$ maps $H^1(\Omega)$ continuously into $H^{1/2}(\partial\Omega)$, and there exists a constant $C>0$ such that
\begin{equation}
\|\gamma(v)\|_{H^{1/2}(\partial\Omega)}
\le
C\,\|v\|_{H^1(\Omega)}
\qquad \forall v\in H^1(\Omega).
\end{equation}
Moreover, the trace inequality provides the estimate
\begin{equation}
\|v\|_{L^2(\partial\Omega)}^2
\le
C\left(
h^{-1}\|v\|_{L^2(\Omega)}^2+h\,|v|_{H^1(\Omega)}^2
\right),
\end{equation}
where $h$ is a characteristic length scale (e.g., $h=h_E$ on an element $E$), and $C$ depends only on the shape-regularity of the domain.

\section{Proofs}
\label{ap:proofs}

\subsection{Proof of Proposition \ref{prop:spectral_equivalence}}
\label{ap:spectral_equivalence}
Writing any $\mathbf{w} \in \missingkernel$ as 
\begin{equation}
    \label{eq:aux_vector_basis_decomposition}
    \mathbf{w} = \sum^{m}_{i=1} x_i \basis[i],
\end{equation}
i.e. by its coefficient vector $\mathbf{x} \in \mathbb{R}^m$.
Then,
\begin{equation}
    a_E (\mathbf{w}, \mathbf{w}) = \mathbf{x}^T \mathbf{K}_E \mathbf{x}, \quad \ahE[s] (\mathbf{w}, \mathbf{w}) = \mathbf{x}^T \mathbf{S}_E \mathbf{x}.
\end{equation}
The inequality (\ref{eq:vem_stabilization_inequality}) becomes: 
\begin{equation}
    \label{eq:aux_matrix_form_stab_ineq}
    C_0 \mathbf{x}^T \mathbf{K}_E \mathbf{x} \leq  \mathbf{x}^T \mathbf{S}_E \mathbf{x} \leq C_1 \mathbf{x}^T \mathbf{K}_E \mathbf{x},
\end{equation}
for all $\mathbf{x} \neq \mathbf{0}$. Since $\mathbf{K}_E$ is SPD, it is possible to divide (\ref{eq:aux_matrix_form_stab_ineq}) by $\mathbf{x}^T \mathbf{K}_E \mathbf{x}$:
\begin{equation}
    \label{eq:aux_rayleight_stab_ineq}
    C_0 \leq \frac{\mathbf{x}^T \mathbf{S}_E \mathbf{x}}{\mathbf{x}^T \mathbf{K}_E \mathbf{x}} \leq C_1
\end{equation}
The fraction 
\begin{equation}
    \rho (\mathbf{x}) = \frac{\mathbf{x}^T \mathbf{S}_E \mathbf{x}}{\mathbf{x}^T \mathbf{K}_E \mathbf{x}}
\end{equation}
is the Rayleigh quotient.

\paragraph{} Because $\mathbf{K}_E$ is SPD, it admits an SPD square root $\mathbf{K}_E^{1/2}$ and an inverse $\mathbf{K}_E^{-1/2}$. Define
\begin{equation}
    \mathbf{B} = \mathbf{K}_E^{-1/2} \mathbf{S}_E \mathbf{K}_E^{-1/2}.
\end{equation}
The matrix $\mathbf{B}$ is symmetric, and for any $\mathbf{x} \neq \mathbf{0}$, letting $\mathbf{y} = \mathbf{K}_E^{1/2}\mathbf{x} \neq \mathbf{0}$,
\begin{equation}
    \rho (\mathbf{x}) = \frac{\left( \mathbf{K}_E^{-1/2}\mathbf{y} \right)^T \mathbf{S}_E \left( \mathbf{K}_E^{-1/2}\mathbf{y} \right)}{\mathbf{y}^T \mathbf{y}} = \frac{\mathbf{y}^T \mathbf{B} \mathbf{y}}{\mathbf{y}^T \mathbf{y}}.
\end{equation}
Hence, bounding $\rho (\mathbf{x})$ for all $\mathbf{x} \neq \mathbf{0}$ is equivalent to bounding the ordinary Rayleigh quotient of $\mathbf{B}$ for all $\mathbf{y} \neq \mathbf{0}$.

\paragraph{} For symmetric matrices, it holds that:
\begin{equation}
    \lambda_{\min} (\mathbf{B}) = \min \limits_{\mathbf{y} \neq \mathbf{0}}  \frac{\mathbf{y}^T \mathbf{B} \mathbf{y}}{\mathbf{y}^T \mathbf{y}}, \quad \lambda_{\max} (\mathbf{B}) = \max \limits_{\mathbf{y} \neq \mathbf{0}}  \frac{\mathbf{y}^T \mathbf{B} \mathbf{y}}{\mathbf{y}^T \mathbf{y}}.
\end{equation}
Therefore, if
\begin{equation}
    \label{eq:aux_rayleigh_bound}
    C_0 \leq  \frac{\mathbf{y}^T \mathbf{B} \mathbf{y}}{\mathbf{y}^T \mathbf{y}} \leq C_1,
\end{equation}
for all $\mathbf{y} \neq \mathbf{0}$. Then,
\begin{equation}
    C_0 \leq \lambda_{\min} (\mathbf{B}) \leq \lambda_{\max} (\mathbf{B}) \leq C_1.
\end{equation}
Hence, all eigenvalues of $\mathbf{B}$ lie in $[C_0, C_1]$. Conversely, if all eigenvalues of $\mathbf{B}$ lie in $[C_0, C_1]$, then for every $\mathbf{y} \neq \mathbf{0}$, (\ref{eq:aux_rayleigh_bound}) holds true, and 
\begin{equation}
    C_0 \leq \rho (\mathbf{x}) \leq C_1
\end{equation}
for all $\mathbf{x} \neq \mathbf{0}$, which is exactly (\ref{eq:aux_matrix_form_stab_ineq}).

\paragraph{} If 
\begin{equation}
    \mathbf{B} \mathbf{y} = \lambda \mathbf{y},
\end{equation}
set
\begin{equation}
    \mathbf{x} = \mathbf{K}_E^{-1/2} \mathbf{y}.
\end{equation}
Then,
\begin{equation}
    \mathbf{K}_E^{-1/2} \mathbf{S}_E \mathbf{K}_E^{-1/2}\mathbf{y} = \lambda \mathbf{y} \Rightarrow \mathbf{S}_E \mathbf{x} = \lambda \mathbf{K}_E \mathbf{x}.
\end{equation}
This shows that the eigenvalues of $\mathbf{B}$ coincide with the generalized eigenvalues of the pair $(\mathbf{S}_E, \mathbf{K}_E)$.

\subsection{Proof of Proposition \ref{prop:kernel_tangent_from_stabilization}}
\label{ap:kernel_tangent_from_stabilization}
To simplify notation in this proof, define:
    \begin{equation}
        g = \projector, \quad f = \overline{U}^c_E.
    \end{equation}
    Moreover, set $X:=V_{h,E}$ and $Y:=[\mathbb{P}_1(E)]^2$, and denote by $\|\cdot\|_X$ and $\|\cdot\|_Y$ norms on $X$ and $Y$, respectively. Let $Y^*:=(Y)^*$ be endowed with the induced dual norm $\|\cdot\|_{Y^*}$. Then,
    \begin{equation}
        g: X \longrightarrow Y \ \ \text{is bounded linear}, \qquad f: Y \longrightarrow \mathbb{R}.
    \end{equation}
    Claim that if $f$ is Fréchet differentiable at $g(\mathbf{u})$ and $g$ is bounded linear, then $f \circ g$ is Fréchet differentiable at $\mathbf{u}$, with
    \begin{equation}
        D(f \circ g) (\mathbf{u}) = Df(g(\mathbf{u})) \circ g.
    \end{equation}
    Equivalently,
    \begin{equation}
        DU_E^c (\mathbf{u}) [\mathbf{h}] = D \overline{U}^c_E(\projector \mathbf{u})[\projector \mathbf{h}], \; \forall \mathbf{h} \in V_{h,E}.
    \end{equation}

    \paragraph{}Since $f$ is Fréchet differentiable at 
    \begin{equation}
        \mathbf{z} = g(\ustar) = \projector \ustar,
    \end{equation}
    there exists a bounded linear functional 
    \begin{equation}
        \label{eq:bounded_linear_functional}
        A = Df(\mathbf{z}): Y=[\mathbb{P}_1(E)]^2 \longrightarrow \mathbb{R},
    \end{equation}
    i.e. $A\in Y^*$,
    such that
    \begin{equation}
        \label{eq:aux_frechet_derivative}
        f(\mathbf{z} + \mathbf{t}) = f(\mathbf{z}) + A(\mathbf{t}) + r_f(\mathbf{t}), \; \text{with} \; \frac{|r_f(\mathbf{t})|}{\| \mathbf{t}\|_Y} \longrightarrow 0 \; \text{as} \; \|\mathbf{t}\|_Y \longrightarrow 0.
    \end{equation}
    Take $\mathbf{t} = g(\mathbf{v})$. Because $g$ is linear:
    \begin{equation}
        (f \circ g)(\mathbf{u} + \mathbf{h}) = f(g(\mathbf{u}) + g(\mathbf{h})) = f(\mathbf{z} + \mathbf{t}).
    \end{equation}
    Applying equation (\ref{eq:aux_frechet_derivative}):
    \begin{equation}
        \begin{split}
            f(\mathbf{z} + \mathbf{t}) &= f(\mathbf{z}) + A(\mathbf{t}) + r_f(\mathbf{t}) \\
            &= f(g(\mathbf{u})) + A(\mathbf{t}) + r_f(\mathbf{t}) \\
            &= (f\circ g)(\mathbf{u}) + A(\mathbf{t}) + r_f(\mathbf{t}).
        \end{split}
    \end{equation}
    Therefore,
    \begin{equation}
        \label{eq:aux_composition_frechet_derivative}
        (f \circ g)(\mathbf{u} + \mathbf{h}) - (f \circ g)(\mathbf{u}) - A(\mathbf{t}) = r_f(\mathbf{t}).
    \end{equation}
    Dividing (\ref{eq:aux_composition_frechet_derivative}) by $\| \mathbf{h} \|_X$:
    \begin{equation}
        \label{eq:aux_frechet_prod_zero}
        \frac{|(f \circ g)(\mathbf{u} + \mathbf{h}) - (f \circ g)(\mathbf{u}) - A(\mathbf{t})|}{\|\mathbf{h}\|_X}
        =
        \frac{|r_f(\mathbf{t})|}{\|\mathbf{t}\|_Y}\frac{\|\mathbf{t}\|_Y}{\|\mathbf{h}\|_X}.
    \end{equation}
    Since $g$ is bounded linear, $\|\mathbf{t}\|_Y=\|g(\mathbf{h})\|_Y \le \|g\|_{\mathcal{L}(X,Y)}\|\mathbf{h}\|_X$, and therefore $\|\mathbf{t}\|_Y \longrightarrow 0$ as $\|\mathbf{h}\|_X \longrightarrow 0$. Hence, by (\ref{eq:aux_frechet_derivative}),
    \begin{equation}
        \frac{|r_f(\mathbf{t})|}{\|\mathbf{t}\|_Y} \longrightarrow 0. 
    \end{equation}
    Moreover, since $g$ is bounded,
    \begin{equation}
        \| \projector \mathbf{h}\|_Y \leq \| \projector \|_{\mathcal{L}(X,Y)} \, \| \mathbf{h} \|_X.
    \end{equation}
    Thus, the product in (\ref{eq:aux_frechet_prod_zero}) tends to zero. Hence, $f \circ g$ is Fréchet differentiable at $\ustar$, and by chain rule:
    \begin{equation}
        D(f \circ g)(\mathbf{u}) = Df(g(\mathbf{u})) \circ g.
    \end{equation}
    Then, by writing in the directional form:
    \begin{equation}
        \label{eq:aux_directional_form}
        DU_E^c(\mathbf{u})[\mathbf{h}] = D U_E^c(\projector \mathbf{u}) [\projector \mathbf{h}].
    \end{equation}

    \paragraph{}Now, the goal is to differentiate the map $\mathbf{u}\mapsto$. From (\ref{eq:aux_directional_form}) and for any direction $\increment{u}$:
    \begin{equation}
        \label{eq:derivative_as_map_notation}
        \mathbf{u} \mapsto D U_E^c(\projector \mathbf{u}) [\projector \increment{u}]
    \end{equation}
    Consider 
    \begin{equation}
        G(\mathbf{u}) = Df(\projector \mathbf{u})
    \end{equation}
    which is a map into the dual space $([\mathbb{P}_1(E)]^2)^*$. Then, (\ref{eq:derivative_as_map_notation}) can be written as:
    \begin{equation}
        \mathbf{u} \mapsto \left\langle G(\mathbf{u}), \projector \increment{u} \right\rangle,
    \end{equation}
    i.e. evaluation of a functional $G(\mathbf{u})$ on a vector $\increment{u}$. 

    \paragraph{}From the definition given in (\ref{eq:bounded_linear_functional}), it is known that $A \in ([\mathbb{P}_1(E)]^2)^*$. Define the following scalar function:
    \begin{equation}
        \label{eq:aux_scalar_function}
        \Phi (\mathbf{u}) = Df(\projector \mathbf{u})[\projector \increment{u}].
    \end{equation}
    Differentiate $\Phi(\mathbf{u})$ in direction $\increment{v}$. Since, by definition, $\Phi$ is Fréchet differentiable at $\mathbf{u}$ its directional derivative exists, and it is given by:
    \begin{equation}
        \lim \limits_{\alpha \rightarrow 0} \frac{\Phi(\mathbf{u}+\alpha \increment{v})}{\alpha} = D\Phi (\mathbf{u})[\increment{v}].
    \end{equation}
    Compute
    \begin{equation}
        \begin{split}
            \Phi(\mathbf{u} + \alpha \increment{v}) &= Df(\projector (\mathbf{u} + \alpha \increment{v}))[\projector \increment{u}]\\
            &= Df(\projector \mathbf{u} + \alpha \projector \increment{v})[\projector \increment{u}].
        \end{split}
    \end{equation}
    Thus,
    \begin{equation}
        \frac{\Phi(\mathbf{u}+\alpha \increment{v})}{\alpha} = \frac{Df(\projector \mathbf{u} + \alpha \projector \increment{v})[\projector \increment{u}]-Df(\mathbf{z})[\projector \increment{u}]}{\alpha},
    \end{equation}
    with $\mathbf{z} = \projector \mathbf{u}$.
    
    \paragraph{}Use that $Df: [\mathbb{P}_1(E)]^2\longrightarrow ([\mathbb{P}_1(E)]^2)^*$ is Fréchet differentiable at $\mathbf{z}$, with derivative $D^2f(\mathbf{z}): [\mathbb{P}_1(E)]^2\longrightarrow ([\mathbb{P}_1(E)]^2)^*$ (i.e. the Hessian). Explicitly, that means:
    \begin{equation}
        \label{eq:aux_hessian_frechet}
        Df(\mathbf{z} + \mathbf{y}) = Df(\mathbf{z}) + D^2f(\mathbf{z})[\mathbf{y}] + r_{Df}(\mathbf{y}), \quad \frac{\| r_{Df}(\mathbf{y})\|_{Y^*}}{\| \mathbf{y} \|_Y} \longrightarrow 0.
    \end{equation}
    Using $\mathbf{y} = \alpha \projector \increment{v}$ in (\ref{eq:aux_hessian_frechet}):
    \begin{equation}
        \label{eq:aux_hessian_frechet_2}
        Df(\mathbf{z} + \alpha \projector \increment{v}) = Df (\mathbf{z}) + D^2f(\mathbf{z})[\alpha\projector \increment{v}] + r_{Df}(\alpha \projector \increment{v}).
    \end{equation}
    Evaluate both sides of (\ref{eq:aux_hessian_frechet_2}) on $\projector \increment{u}$:
    \begin{equation}
        Df(\mathbf{z} + \alpha \projector \increment{v})[\projector \increment{u}] = Df(\mathbf{z})[\projector \increment{u}] + D^2f(\mathbf{z})[\alpha \projector \increment{v}][\projector \increment{u}] + r_{Df}(\alpha \projector \increment{v})[\projector \increment{u}].
    \end{equation}
    Subtract $Df(\mathbf{z})[\projector \increment{u}]$ and divide by $\alpha$:
    \begin{equation}
        \frac{Df(\mathbf{z} + \alpha \projector \increment{v})[\projector \increment{u}]-Df(\mathbf{z})[\projector \increment{u}]}{\alpha} =  D^2f(\mathbf{z})[\projector \increment{v}][\projector \increment{u}] + \frac{ r_{Df}(\alpha \projector \increment{v})[\projector \increment{u}]}{\alpha}.
    \end{equation}
    The remainder term vanishes as $\alpha \rightarrow 0$ because:
    \begin{equation}
        \label{eq:aux_remainder_estimate}
        \begin{split}
            \left| \frac{r_{Df}(\alpha \projector \increment{v})[\projector \increment{u}]}{\alpha}\right|
        &\leq
        \frac{\|r_{Df}(\alpha \projector \increment{v})\|_{Y^*}}{|\alpha|}\,\|\projector \increment{u}\|_{Y} \\
        &=  \frac{\|r_{Df}(\alpha \projector \increment{v})\|_{Y^*}}{\| \alpha \projector \increment{v}\|_Y}  \frac{\| \projector \increment{v}\|_Y}{\| \projector \increment{v}\|_Y} \longrightarrow 0.
        \end{split}
    \end{equation}
    Note the in the last equality of (\ref{eq:aux_remainder_estimate}), it was considered that:
    \begin{equation}
        \| \alpha\projector \increment{v} \|_Y = |\alpha| \| \projector \increment{v}\|_Y.
    \end{equation}
    Hence,
    \begin{equation}
        D\Phi(\mathbf{u})[\increment{v}] = D^2f(\projector \mathbf{u})[\projector \increment{v}][\projector \increment{u}]
    \end{equation}
    Since $f = \overline{U}_E^c$,
    \begin{equation}
        \label{eq:aux_second_frechet_derivative}
        D^2 U^c_E (\mathbf{u}) [\increment{u}, \increment{v}] = D^2 \overline{U}_E^c (\projector \mathbf{u})[\projector \increment{u}] [ \projector \increment{v}].
    \end{equation}
    Take $\increment{u}, \increment{v} \in \missingkernel$. Then, 
    \begin{equation}
        \projector \increment{u} = \projector \increment{v} = \mathbf{0}.
    \end{equation}
    From (\ref{eq:aux_second_frechet_derivative}), it is possible to conclude that
    \begin{equation}
        D^2 U_E^c (\mathbf{u}) [\increment{u}] [ \increment{v}] =  D^2 \overline{U}_E^c (\projector \mathbf{u})[\mathbf{0}] [ \mathbf{0}] = \mathbf{0}.
    \end{equation}

\subsection{Proof of Proposition \ref{prop:quadratic_form_spectral_equivalence}}
\label{ap:quadratic_form_spectral_equivalence}
Choose any basis $\{ \mathbf{e}_i \}^m_{i=1}$ of $\missingkernel$. Any $\increment{u} \in \missingkernel$ is written as:
    \begin{equation}
        \label{eq:aux_vector_in_basis}
        \increment{u} = \sum \limits^m_{i=1} x_i \mathbf{e}_i, \; \mathbf{x}\in \mathbb{R}^m.
    \end{equation}
    Because $q^s$ and $q^*$ are quadratic forms coming from second variations, they are symmetric bilinear forms:
    \begin{equation}
        \label{eq:aux_bilinear_form_of_q}
        q^s(\increment{u}) = \mathcal{B}^s(\increment{u}, \increment{v}), q^*(\increment{u}), \; q^*(\increment{u}) = \mathcal{B}^*(\increment{u}, \increment{v}), \; \forall \increment{v}\in \missingkernel,
    \end{equation}
    with $\mathcal{B}^s$ and $\mathcal{B}^*$symmetric bilinear forms. Define the associated matrices $\mathbf{K}^s$ and $\mathbf{K}^*$ as:
    \begin{equation}
        \label{eq:aux_stiff_matrices_of_q}
        (\mathbf{K}^s)_{ij} = \mathcal{B}^s(\mathbf{e}_i, \mathbf{e}_j), \; (\mathbf{K}^*)_{ij} = \mathcal{B}^*(\mathbf{e}_i, \mathbf{e}_j).
    \end{equation}
    From (\ref{eq:aux_vector_in_basis}), (\ref{eq:aux_bilinear_form_of_q}) and (\ref{eq:aux_stiff_matrices_of_q}):
    \begin{equation}
        q^s(\increment{u}) = \mathbf{x}^T \mathbf{K}^s \mathbf{x}, \; q^*(\increment{u}) = \mathbf{x}^T \mathbf{K}^* \mathbf{x}.
    \end{equation}
    These are the representation of the quadratic form by matrices. Also, it is possible to observe that $\mathbf{K}^*$ is symmetric positive definite because $q^*(\increment{u}) > 0$ and $\increment{u} \neq \mathbf{0}$.

    \paragraph{}Rewrite (\ref{eq:stability_criteria_eigenproblem}) in coordinates $\mathbf{x} \neq \mathbf{0}$:
    \begin{equation}
        C_* \mathbf{x}^T \mathbf{K}^* \mathbf{x} \leq \mathbf{x}^T \mathbf{K}^s \mathbf{x} \leq C^* \mathbf{x}^T \mathbf{K}^* \mathbf{x}.
    \end{equation}
    Note that this inequality is based-independent, but it is useful to keep in this algebraic form. The Rayleigh quotient is given by:
    \begin{equation}
        \rho (\mathbf{x}) = \frac{\mathbf{x}^T \mathbf{K}^s \mathbf{x}}{\mathbf{x}^T \mathbf{K}^* \mathbf{x}}
    \end{equation}
    for all $\mathbf{x} \neq \mathbf{0}$. Because $\mathbf{K}^*$ is SPD, this quotient is well-defined since the denominator is always positive. From (\ref{eq:stability_criteria_eigenproblem}):
    \begin{equation}
        \label{eq:reduced_stability_criteria_eigenproblem}
        C_* \leq \rho (\mathbf{x}) \leq C^*.
    \end{equation}
    Recall that $\lambda$ is the generalized eigenvalue of $(\mathbf{K}^s, \mathbf{K}^*)$ if there exists $\mathbf{x} \neq \mathbf{0}$ such that:
    \begin{equation}
        \mathbf{x}^T \mathbf{K}^s \mathbf{x} = \lambda \mathbf{x}^T \mathbf{K}^* \mathbf{x}.
    \end{equation}
    This implies that
    \begin{equation}
        \label{eq:aux_rayleigh_generalized_eigenvalue}
        \frac{\mathbf{x}^T \mathbf{K}^s \mathbf{x}}{\mathbf{x}^T \mathbf{K}^* \mathbf{x}} = \lambda = \rho (\mathbf{x}).
    \end{equation}
    Thus, every generalized eigenvalue is a Rayleigh quotient evaluated at its eigenvector. Finally, from (\ref{eq:reduced_stability_criteria_eigenproblem}) and (\ref{eq:aux_rayleigh_generalized_eigenvalue}):
    \begin{equation}
        C_* \leq \lambda \leq C^*.
    \end{equation}

\subsection{Proof of Lemma \ref{lemma:difference_inequality}}
\label{ap:lemma_proof}

\paragraph{}Trivially, $|e_i| \leq h_E$ for every edge. By the regularity assumptions (M1) to (M4) (see Assumption \ref{assumption:regularity}), it is known that:
\begin{equation}
    c_e h_E \leq |e_i| \leq h_E.
\end{equation}
Hence,
\begin{equation}
    \frac{1}{h_E} \leq \frac{1}{|e_i|} \leq \frac{1}{c_e h_E}\Rightarrow \sum \limits^{N_E}_{i=1} |g_{i+1} - g_i|^2   \simeq h_E \sum \limits^{N_E}_{i=1} \frac{1}{|e_i|} |g_{i+1} - g_i|^2,
\end{equation}
with constants depending only on $c_e$. So, it is enough to proves
\begin{equation}
    \label{eq:aux_equiv_difference_inequality}
    |g|_\fracsobolev \simeq  \sum \limits^{N_E}_{i=1} |g_{i+1} - g_i|^2,
\end{equation}
and then convert to (\ref{eq:difference_inequality}) and (\ref{eq:difference_inequality}) using (\ref{eq:aux_equiv_difference_inequality}).

\paragraph{} Decompose the double integral into edge contributions:
\begin{equation}
    |g|^2_{\fracsobolev}
    =
    \sum_{i=1}^{N_E}\sum_{j=1}^{N_E}
    \int_{e_i}\int_{e_j}
    \frac{|g(\mathbf p)-g(\mathbf q)|^2}{|\mathbf p-\mathbf q|^2}\,ds_{\mathbf p}\,ds_{\mathbf q}.
\end{equation}
Since each integrand is nonnegative, retaining only the diagonal terms ($i=j$) yields the lower bound
\begin{equation}
    \label{eq:aux_lower_bound}
    |g|^2_{\fracsobolev}
    \ge
    \sum_{i=1}^{N_E}
    \int_{e_i}\int_{e_i}
    \frac{|g(\mathbf p)-g(\mathbf q)|^2}{|\mathbf p-\mathbf q|^2}\,ds_{\mathbf p}\,ds_{\mathbf q}.
\end{equation}
Fix an edge $e=[\mathbf a,\mathbf b]$ of length $L:=|e|$ and introduce the arc-length parametrization
\begin{equation}
    \Gamma:[0,L]\to e,
    \qquad
    \Gamma(s)=\mathbf a+\frac{s}{L}(\mathbf b-\mathbf a).
\end{equation}
Define the one-dimensional restriction $v:[0,L]\to\mathbb R^m$ by
\begin{equation}
    v(s):=g(\Gamma(s)).
\end{equation}
Since $g|_e\in \mathbb P_1(e)$, the function $v$ is affine in $s$, and therefore
\begin{equation}
    g(\Gamma(s))
    =
    g(\mathbf a)+\frac{s}{L}\big(g(\mathbf b)-g(\mathbf a)\big),
    \; s\in[0,L].
\end{equation}
So,
\begin{equation}
    g(\Gamma (s)) - g (\Gamma (t)) = \frac{s-t}{L}\big(g(\mathbf b)-g(\mathbf a)\big), \; s, t\in[0,L].
\end{equation}
Therefore,
\begin{equation}
    \label{eq:aux_transfomed_double_integral}
    \int_{e_i}\int_{e_i}
    \frac{|g(\mathbf p)-g(\mathbf q)|^2}{|\mathbf p-\mathbf q|^2}\,ds_{\mathbf p}\,ds_{\mathbf q} = \int \limits^L_0 \int \limits^L_0 \frac{\left|(s-t)\frac{\big(g(\mathbf b)-g(\mathbf a)\big)}{L} \right|^2}{|s-t|^2} dsdt = \left|\big(g(\mathbf b)-g(\mathbf a)\big)\right|^2.
\end{equation}
Applying (\ref{eq:aux_transfomed_double_integral}) edge-by-edge to (\ref{eq:aux_lower_bound}):
\begin{equation}
    |g|^2_{\fracsobolev} \geq \sum \limits^{N_E}_{i=1}|g_{i+1}-g_i|^2.
\end{equation}

\paragraph{} Let $\mathbf{p}, \mathbf{q} \in \partial E$ and let $d_{\partial E}(\mathbf{p}, \mathbf{q})$ be the shorter arc-length distance along $\partial E$ between $\mathbf{p}$ and $\mathbf{q}$. Because $g$ is continuous and piecewise affine, it is absolute continuous along $\partial E$, and by the Fundamental Theorem of Calculus:
\begin{equation}
    g(\mathbf{p}) - g (\mathbf{q}) = \int \limits_{\text{arc}(\mathbf{q} \rightarrow \mathbf{p})} \partial_t g (\bm{\xi}) d_{\bm{\xi}},
\end{equation}
where $\text{arc}(\mathbf{q} \rightarrow \mathbf{p})$ is the shorter arc from $\mathbf{q}$ to $\mathbf{p}$, and $\partial_t g$ is the tangential derivative (piecewise constant on edges). By the Cauchy-Schwarz Inequality:
\begin{equation}
    \label{eq:aux_cauchy_schwarz}
    \left|\big(g(\mathbf b)-g(\mathbf a)\big)\right|^2 \leq d_{\partial E}(\mathbf{p}, \mathbf{q}) \int \limits_{\text{arc}(\mathbf{q} \rightarrow \mathbf{p})} | \partial_t g (\bm{\xi})| ds_{\bm{\xi}}
\end{equation}
By definition of the derived chord-arc bound, there exists $C_{\text{ca}} > 0$ depending on $\rho_0$, $c_e$ and $N_{\max}$ such that:
\begin{equation}
    \label{eq:aux_chord_arc}
    d_{\partial E} (\mathbf{p}, \mathbf{q}) \leq C_{\text{ca}}|\mathbf{p} - \mathbf{q}|, \; \forall \mathbf{p}, \mathbf{q} \in \partial E.
\end{equation}
Combining (\ref{eq:aux_cauchy_schwarz}) and (\ref{eq:aux_chord_arc}):
\begin{equation}
    \left|\big(g(\mathbf b)-g(\mathbf a)\big)\right|^2 \leq \frac{C_{\text{ca}}}{|\mathbf{p} - \mathbf{q}|} \int \limits_{\text{arc}(\mathbf{q} \rightarrow \mathbf{p})} |\partial_t g (\bm{\xi})|^2 ds_{\bm{\xi}}.
\end{equation}
Integrate over $(\mathbf{p}, \mathbf{q}) \in \partial E \times \partial E$:
\begin{equation}
    \label{eq:aux_double_int_inequality_ca}
    |g|^2_{\fracsobolev} = \int \limits_{\partial E} \int \limits_{\partial E}  \frac{|g(\mathbf p)-g(\mathbf q)|^2}{|\mathbf p-\mathbf q|^2}\,ds_{\mathbf p}\,ds_{\mathbf q} \leq C_{\text{ca}} \int \limits_{\partial E} \int \limits_{\partial E} \frac{1}{|\mathbf{p} - \mathbf{q}|}\left( \int \limits_{\text{arc}(\mathbf{q} \rightarrow \mathbf{p})} |\partial_t g (\bm{\xi})|^2 d_{\bm{\xi}} \right) ds_{\mathbf p}\,ds_{\mathbf q}.
\end{equation}

\paragraph{}Define the indicator function:
\begin{equation}
    \chi_{\text{arc}(\mathbf{q} \rightarrow \mathbf{p})} (\bm{\xi})= \begin{cases}
        1 , \; \text{if} \; \bm{\xi} \in \text{arc}(\mathbf{q} \rightarrow \mathbf{p}), \\
        0, \; \text{otherwise}.
    \end{cases}
\end{equation}
Then,
\begin{equation}
    \label{eq:aux_arc_transformation}
    \int \limits_{\text{arc}(\mathbf{q} \rightarrow \mathbf{p})} |\partial_t g (\bm{\xi})|^2 ds_{\bm{\xi}} = \int \limits_{\partial E} |\partial_t g (\bm{\xi})|^2  \chi_{\text{arc}(\mathbf{q} \rightarrow \mathbf{p})} ds_{\bm{\xi}}.
\end{equation}
From (\ref{eq:aux_arc_transformation}) in (\ref{eq:aux_double_int_inequality_ca}), and by the Fubini Theorem:
\begin{equation}
    \label{eq:aux_target_inequality_ca}
    |g|^2_{\fracsobolev} \leq C_{\text{ca}} \int \limits_{\partial E} |\partial_t g (\bm{\xi})|^2 \left[ \int \limits_{\partial E} \int \limits_{\partial E} \frac{ \chi_{\text{arc}(\mathbf{q} \rightarrow \mathbf{p})}}{|\mathbf{p} - \mathbf{q}|} ds_{\mathbf{p}} ds_{\mathbf{q}} \right] ds_{\bm{\xi}}.
\end{equation}
So, now the whole problem is to show that the bracketed quantity is uniformly bounded:
\begin{equation}
    \label{eq:aux_bracketed_quantity}
    \mathcal{J}(\bm{\xi}) = \int \limits_{\partial E} \int \limits_{\partial E} \frac{ \chi_{\text{arc}(\mathbf{q} \rightarrow \mathbf{p})}}{|\mathbf{p} - \mathbf{q}|} ds_{\mathbf{p}} ds_{\mathbf{q}} \leq C_{\text{arc}},
\end{equation}
with $C_{\text{arc}}>0$ depending only on $\rho_0$, $c_e$ and $N_{\max}$. Once (\ref{eq:aux_bracketed_quantity}) is achieved, (\ref{eq:aux_target_inequality_ca}) immediately gives:
\begin{equation}
    |g|^2_{\fracsobolev} \leq C_{\text{geom}} \int \limits_{\partial E}|\partial_t g |^2 ds.
\end{equation}

\paragraph{} Fix $\bm{\xi} \in \partial E$. For each fixed $\mathbf{q} \in \partial E$, define the set of $\mathbf{p}$'s for which $\bm{\xi}$ lies on the chosen shorter arc from $\mathbf{q}$ to $\mathbf{p}$:
\begin{equation}
    S_{\bm{\xi}}(\mathbf{q}) = \{ \mathbf{p} \in \partial E: \; \bm{\xi} \in  \text{arc}(\mathbf{q} \rightarrow \mathbf{p})\}.
\end{equation}
Then, 
\begin{equation}
    \mathcal{J}(\bm{\xi}) = \int \limits_{\partial E} \left( \int \limits_{S_{\bm{\xi}}} \frac{1}{|\mathbf{p} - \mathbf{q}|} ds_{\mathbf{p}} \right) ds_{\mathbf{q}}.
\end{equation}
Thus, it is necessary to bound the inner integral uniformly in $\mathbf{q}$, and then integrate over it. Note that on a simple polygon boundary, for fixed $\bm{\xi}$ and $\mathbf{q}$, the set $S_{\bm{xi}}(\mathbf{q})$ is essentially half of the boundary seen from $\mathbf{q}$. It is a connected arc (possibly with an endpoint ambiguity where $\mathbf{p}$ is exactly the opposite). Naturally, $S_{\bm{\xi}}(\mathbf{q})$ is a single arc, not a complicated union, and under $N_E \leq N_{\max}$ that arc crosses at most $N_{\max}$ edges. This prevents counting $\bm{\xi}$ infinitely many times.

\paragraph{}For a fixed $\mathbf{q} \in \partial E$,
\begin{equation}
    \mathcal{I}(\mathbf{q}) = \int \limits_{\partial E} \frac{1}{|\mathbf{p} - \mathbf{q}|} ds_{\mathbf{p}} = \sum \limits^{N_E}_{i=1} \int \limits_{e_i}\frac{1}{|\mathbf{p} - \mathbf{q}|}ds_{\mathbf{p}}.
\end{equation}
Let $\eta$ be a line segment of length $L_\eta$ in $\mathbb{R}^2$. Let
\begin{equation}
    \delta = \text{dist} (\mathbf{q}, \eta)
\end{equation}
be the distance between $\mathbf{q}$ and $\eta$. Parametrize the segment by the arc-length $s \in [0,L_\eta]$ with a unit speed map $\mathbf{p}(s)$. Then,
\begin{equation}
    \int \limits_\eta \frac{1}{|\mathbf{p} - \mathbf{q}|} ds = \int \limits^{L_\eta}_0 \frac{1}{|\mathbf{p}(s) - \mathbf{q}|} ds.
\end{equation}
Let $\mathbf{q}_\eta$ be the orthogonal projection of $\mathbf{q}$ onto the line containing $\eta$. Along the line:
\begin{equation}
    |\mathbf{p}(s) - \mathbf{q}|^2 = |\mathbf{p}(s) - \mathbf{q}_\eta|^2 + |\mathbf{q} - \mathbf{q}_\eta|^2 \geq (s-s_0)^2 + \delta^2,
\end{equation}
for some shift $s_0$ (the coordinate of the projection). So, one gets the upper bound:
\begin{equation}
    \frac{1}{|\mathbf{p}(s) - \mathbf{q}|^2} \leq \frac{1}{\sqrt{(s-s_0)^2 + \delta^2}}.
\end{equation}
Hence,
\begin{equation}
    \label{eq:aux_integral_inequality}
    \int \limits_\eta \frac{1}{|\mathbf{p} - \mathbf{q}|^2} ds \leq \int \limits^{L_\eta}_{0}\frac{1}{\sqrt{(s-s_0)^2 + \delta^2}} ds.
\end{equation}
Recall the following improper inequality:
\begin{equation}
    \label{eq:aux_improper_integral}
    \int \frac{1}{\sqrt{(s-s_0)^2 + \delta^2}} ds = \text{arcsinh} \left( \frac{s - s_0}{\delta} \right) = \log \left( \frac{s - s_0}{\delta} + \sqrt{1 + \left( \frac{s - s_0}{\delta} \right)^2} \right).
\end{equation}
So, from (\ref{eq:aux_improper_integral}) in (\ref{eq:aux_integral_inequality}):
\begin{equation}
    \int \limits_\eta \frac{1}{|\mathbf{p} - \mathbf{q}|^2} ds \leq \log \left( \frac{L_\eta - s_0}{\delta} + \sqrt{1 + \left( \frac{L_\eta  - s_0}{\delta} \right)^2} \right) - \log \left( -\frac{s_0}{\delta} + \sqrt{1+\left( \frac{s_0}{\delta} \right)^2} \right).
\end{equation}
Each log term is bounded above up to $\log(1 + L_\eta/\delta)$, so overall:
\begin{equation}
    \int \limits_\eta \frac{1}{|\mathbf{p} - \mathbf{q}|^2} ds \leq C \left( 1+ \log \left(\frac{L_\eta}{\delta}\right) \right),
\end{equation}
for some constant $C>0$. If $\mathbf{q} \in \eta$, then the integrand behaves like $1/|s-s_\mathbf{q}|$ near the point $s_\mathbf{q}$, where $p(s_\mathbf{q}) = \mathbf{q}$. The integral $\int_\eta 1/|s-s_\mathbf{q}| ds$ diverges if taken in the usual sense, but its principal value is finite.

\paragraph{}Now, by the mesh regularity assumptions, it is known that $L_\eta$ is never extremely small, and there are only finitely many edges. This yield a bound 
\begin{equation}
    \mathcal{I}(\mathbf{q}) \leq \overline{C} (1 + \log \hat{C}),
\end{equation}
where $\overline{C}>0$ depends only on $c_e$ and $N_{\max}$, and since $\hat{C}$ is a fixed ration (edge length compared to $h_E$), the logarithm can be absorbed by the constant:
\begin{equation}
    \sup_{\mathbf{q} \in \partial E} \int \limits_{\partial E} \frac{1}{|\mathbf{p} - \mathbf{q}|^2} ds_{\mathbf{p}} \leq \overline{C}.
\end{equation}

\paragraph{}Since $S_{\bm{\xi}}(\mathbf{q}) \subset \partial E$,
\begin{equation}
    \label{eq:aux_s_xi_ineq}
    \int \limits_{S_{\bm{\xi}}(\mathbf{q})} \frac{1}{|\mathbf{p} - \mathbf{q}|^2} ds_{\mathbf{p}} \leq \int \limits_{\partial E} \frac{1}{|\mathbf{p} - \mathbf{q}|^2} ds_{\mathbf{p}} \leq \overline{C}, \; \forall \mathbf{q}.
\end{equation}
From (\ref{eq:aux_s_xi_ineq}) in (\ref{eq:aux_bracketed_quantity}):
\begin{equation}
    \mathcal{J}(\bm{\xi}) \leq \int \limits_{\partial E} \overline{C} ds = \overline{C} |\partial E|,
\end{equation}
where $|\partial E|$ can be understood as the perimeter of the polygon $E$. By the mesh regularity assumptions (M2) and (M3), it is known that $|\partial E|$ is comparable to $h_E$. There are at most $N_{\max}$ edges and each has a length less or equal to $h_E$, so:
\begin{equation}
    \label{eq:aux_geom_inequality_leq}
    |\partial E| \leq N_{\max} h_E.
\end{equation}
Also, each edge has length greater or equal to $c_e h_E$:
\begin{equation}
    \label{eq:aux_geom_inequality_geq}
    |\partial E| \geq N_E c_e h_E \geq c_e h_E.
\end{equation}
From (\ref{eq:aux_geom_inequality_leq}) and (\ref{eq:aux_geom_inequality_geq}):
\begin{equation}
    |\partial E| \simeq h_E,
\end{equation}
with constants depending only on $c_e$ and $N_{\max}$. Thus,
\begin{equation}
    \label{eq:aux_mathcal_J_ineq}
    \mathcal{J}(\bm{\xi}) \leq C_{\text{arc}}.
\end{equation}
From (\ref{eq:aux_mathcal_J_ineq}) in (\ref{eq:aux_bracketed_quantity}):
\begin{equation}
    |g|^2_{\fracsobolev, h} \leq C_{\text{arc}} \int \limits_{\partial E} |\delta_t g (\bm{\xi})|^2 ds_{\bm{\xi}}. 
\end{equation}
Thus, using the scaled semi-norm, it holds:
\begin{equation}
    |g|^2_{\fracsobolev, h} \leq C_{\text{arc}} \int \limits_{\partial E} |\partial_E g|^2 ds
\end{equation}

\paragraph{} Since $g$ is affine on each edge $e_i$, $\partial_t g$ is constant in $e_i$ with
\begin{equation}
    \partial_t g |_{e_i} = \frac{g_{i+i} - g_i}{|e_i|}.
\end{equation}
Using $|e_i| \geq c_e h_E$ and $|e_i| \leq h_E$, one gets:
\begin{equation}
    \sum \limits_{i=1}^{N_E} \frac{g_{i+i} - g_i}{|e_i|} \simeq \frac{1}{h_E}  \sum \limits_{i=1}^{N_E} |g_{i+1} - g_i|^2,
\end{equation}
with constants depending only on $c_e$. Therefore:
\begin{equation}
    |g|^2_{\fracsobolev} \leq C_{\text{ca}}  \sum \limits_{i=1}^{N_E} |g_{i+1} - g_i|^2
\end{equation}

\paragraph{}The constants $C_2$ and $C_3$ in \eqref{eq:difference_inequality} are the \emph{uniform equivalence constants} that quantify the comparison between the scaled Gagliardo seminorm on $\partial E$ and the discrete edge-difference seminorm. More precisely, $C_2>0$ is a lower-bound constant such that, for every admissible polygon $E$ and every $g\in C^0(\partial E)$ with $g|_{e_i}\in\mathbb P_1(e_i)$, one has
\begin{equation}
    \frac{1}{h_E}|g|^2_{\fracsobolev}\;\ge\; C_2\sum_{i=1}^{N_E}\frac{1}{|e_i|}|g_{i+1}-g_i|^2,
\end{equation}
and $C_3>0$ is an upper-bound constant such that
\begin{equation}
    \frac{1}{h_E}|g|^2_{\fracsobolev}\;\le\; C_3\sum_{i=1}^{N_E}\frac{1}{|e_i|}|g_{i+1}-g_i|^2.
\end{equation}
Both constants are mesh-uniform. Their values do not depend on $h_E$ or on the particular element $E$, but only on the geometric regularity parameters (e.g.\ the chunkiness/star-shapedness parameter $\rho_0$, the edge-length bound $c_e$ ensuring $c_eh_E\le |e_i|\le h_E$, and the uniform bound $N_{\max}$ on the number of edges). In particular, $C_2$ arises from retaining the nonnegative diagonal contributions in the double integral and applying the edge-length scaling, whereas $C_3$ additionally incorporates the uniform geometric bounds controlling off-diagonal edge interactions (e.g.\ chord--arc type estimates) together with the equivalence between $\int_{\partial E}|\partial_t g|^2\,ds$ and $\sum_i |g_{i+1}-g_i|^2/|e_i|$ for piecewise affine traces.

\subsection{Proof of Theorem \ref{theo:poincare_korn}}
\label{ap:theo_poincare_korn}

\paragraph{}Let $\gamma: [H^1(E)]^2 \longrightarrow [\fracsobolev]^2$ be the trace operator:
\begin{equation}
    \gamma \mathbf{w} = \mathbf{w}|_{\partial E}
\end{equation}
Because $\mathbf{w}$ is harmonic in $E$ (see the virtual element space definition in Section \ref{sec:vem_finite_elasticity}), it is the unique harmonic extension of its boundary trace $\gamma \mathbf{w}$. For scalar harmonic functions one has standard stability estimate:
\begin{equation}
    \label{eq:aux_geometric_equivalence}
    |\mathbf{w}|_{1,E} \simeq |\gamma \mathbf{w}|_{\fracsobolev}.
\end{equation}

\paragraph{} On each edge $e=(\mathbf{x}_i, \mathbf{x}_{i+1})$, $\gamma \mathbf{w}$ is linear. Hence, completely determined by the endpoints:
\begin{equation}
    \gamma \mathbf{w}|_{e} \in [\mathbb{P}_1(e)]^2, \; (\gamma \mathbf{w})(\mathbf{x}_{i}) = \mathbf{w}(\mathbf{x}_{i}), \; (\gamma \mathbf{w})(\mathbf{x}_{i+1}) = \mathbf{w}(\mathbf{x}_{i+1}).
\end{equation}
So the boundary trace is a continuous piecewise linear function on $\partial E$. This allows the comparison between the trace $H^{1/2}$-seminorm and a pure discrete quantity built from nodal values. 

\paragraph{} By Lemma \ref{lemma:difference_inequality}, a standard discrete characterization is:
\begin{equation}
    \label{eq:aux_trace_geometric_equivalence}
    |\gamma \mathbf{w}|^2_{\fracsobolev} \simeq \sum \limits_{e \in \partial E} \frac{1}{|e|} \left( \mathbf{w}(\mathbf{x}_{i+1}) - \mathbf{w}(\mathbf{x}_i) \right).
\end{equation}
This is the equivalence between boundary $H^{1/2}$-seminorm and nodal differences for piecewise traces. Combining (\ref{eq:aux_geometric_equivalence}) and (\ref{eq:aux_trace_geometric_equivalence}):
\begin{equation}
    \label{eq:aux_related_seminorm}
    |\mathbf{w}|^2_{1,E} \simeq \sum \limits_{e \in \partial E} \frac{1}{|e|} |\mathbf{w}(\mathbf{x}_{i+1}) - \mathbf{w}(x_i)|^2.
\end{equation}
Note that (\ref{eq:aux_related_seminorm}) relates the interior $H^1$-seminorm to a boundary discrete edge-difference seminorm.

\paragraph{}The target quantity in (\ref{eq:poincare_korn}) is $\sum_i |\mathbf{w}(\mathbf{x}_i)|^2$ and not the differences. To pass from the differences to values, the fact that $\mathbf{w} \in \missingkernel$ is used. Since $\projector$ projects onto affine polynomials, $\projector \mathbf{w} = \mathbf{0}$ implies that the affine content (including constants) is removed. Therefore, on $\missingkernel$ one obtains a Poincaré inequality:
\begin{equation}
    \label{eq:aux_poincare_like_ineq}
    \| \mathbf{w} \|_{0,E} \leq C h_E |\mathbf{w}|_{1,E}, \; C>0. 
\end{equation}
This is the continuous Poincaré inequality on the subspace where the zero-mode is eliminated. Using (\ref{eq:aux_related_seminorm}) and (\ref{eq:aux_poincare_like_ineq}) one can write the discrete version of the Poincaré inequality:
\begin{equation}
    \label{eq:aux_discrete_poincare}
    \sum \limits_{i=1}^{N_E} |\mathbf{w}(\mathbf{x}_i)|^2 \leq C h_E \sum \limits_{e \in \partial E}\frac{1}{|e|} |\mathbf{w}(\mathbf{x}_{i+1}) - \mathbf{w}(x_i)|^2.
\end{equation}
Under the standard shape regularity, the constant $C$ depends only on those regularity parameters presented in (M1)-(M4).

\paragraph{}Divide (\ref{eq:aux_discrete_poincare}) by $h_E^2$:
\begin{equation}
    \label{eq:aux_normalized_discrete_poincare}
    \frac{1}{h_E^2} \sum \limits^{N_E}_{i=1} |\mathbf{w}(\mathbf{x}_i)|^2 \leq \frac{C}{h_E} \sum \limits_{e \in \partial E}\frac{1}{|e|} |\mathbf{w}(\mathbf{x}_{i+1}) - \mathbf{w}(x_i)|^2.
\end{equation}\
Using the shape regularity assumptions:
\begin{equation}
    |e| \simeq h_E.
\end{equation}
Thus, the right-hand side of (\ref{eq:aux_normalized_discrete_poincare}) becomes:
\begin{equation}
    \label{eq:rhs_discrete_norm_porincare}
    \frac{1}{h_E^2} \sum \limits^{N_E}_{i=1} |\mathbf{w}(\mathbf{x}_i)|^2 \leq C^\prime \sum \limits_{e \in \partial E}\frac{1}{|e|} |\mathbf{w}(\mathbf{x}_{i+1}) - \mathbf{w}(x_i)|^2.
\end{equation}
Applying (\ref{eq:aux_related_seminorm}) to the right-hand side of (\ref{eq:rhs_discrete_norm_porincare}):
\begin{equation}
    \label{eq:aux_rhs_poincare_prime}
    \sum \limits_{e \in \partial E}\frac{1}{|e|} |\mathbf{w}(\mathbf{x}_{i+1}) - \mathbf{w}(x_i)|^2 \simeq |\mathbf{w}|_{1,E}.
\end{equation}
From (\ref{eq:aux_rhs_poincare_prime}) in (\ref{eq:rhs_discrete_norm_porincare}):
\begin{equation}
    \frac{1}{h_E^2} \sum \limits^{N_E}_{i=1} |\mathbf{w}(\mathbf{x}_i)|^2 \leq C^* |\mathbf{w}|_{1,E}.
\end{equation}

\paragraph{}Parametrize the length of edge $e$ by its arc-length using a map:
\begin{equation}
    \bm{\Gamma}: [0, |e|] \longrightarrow e, \; \bm{\Gamma}(0) = \mathbf{x}_i, \; \bm{\Gamma}(e) = \mathbf{x}_{i+1}, \; |\bm{\Gamma}^\prime (s)| =1,
\end{equation}
with $s \in [0, |e|]$. Define the restriction of the trace to the edge as one-dimensional function by:
\begin{equation}
    \mathbf{v}(s) = (\gamma \mathbf{w}) (\bm{\Gamma}(s)) \in \mathbb{R}^2.
\end{equation}
Then, the tangential derivative along the adge corresponds to differentiation with respect to arc-length:
\begin{equation}
    \partial_t (\gamma \mathbf{w})(\bm{\Gamma}(s)) = \frac{d}{ds}\mathbf{v}(s) = v^\prime (s).
\end{equation}
By the Fundamental Theorem of Calculus:
\begin{equation}
    \mathbf{w}(\mathbf{x}_{i+1}) - \mathbf{w}(\mathbf{x}_i) = \mathbf{v}(|e|) - \mathbf{v}(0) = \int \limits^{|e|}_{0} \mathbf{v}^\prime ds = \int \limits^{|e|}_{0}  \partial_t (\gamma \mathbf{w})(\bm{\Gamma}(s)) ds.
\end{equation}
Applying the Cauchy-Schwarz inequality:
\begin{equation}
    | \mathbf{w}(\mathbf{x}_{i+1}) - \mathbf{w}(\mathbf{x}_i)| \leq \int \limits^{|e|}_{0}  \left|\partial_t (\gamma \mathbf{w})(\bm{\Gamma}(s))\right| ds \leq \left( \int \limits^{|e|}_{0} 1^2 ds \right)^{1/2} \left( \int \limits^{|e|}_{0} \left|\partial_t (\gamma \mathbf{w})(\bm{\Gamma}(s))\right|^2 ds \right)^{1/2}.
\end{equation}
Squaring both sides:
\begin{equation}
    | \mathbf{w}(\mathbf{x}_{i+1}) - \mathbf{w}(\mathbf{x}_i)|^2 \leq |e| \int \limits^{|e|}_{0} \left|\partial_t (\gamma \mathbf{w})(\bm{\Gamma}(s))\right|^2 ds.
\end{equation}
Note that $ds$ is the arc-length measure of $e$, so the last integral is exactly:
\begin{equation}
    \label{eq:aux_integral_ineq_arc_length}
    \int \limits^{|e|}_{0} \left|\partial_t (\gamma \mathbf{w})(\bm{\Gamma}(s))\right|^2 ds = \int \limits_e |\partial_t (\gamma \mathbf{w})|^2 ds.
\end{equation}
Using (\ref{eq:aux_geometric_equivalence}) in (\ref{eq:aux_integral_ineq_arc_length}):
\begin{equation}
    \int \limits_e |\partial_t (\gamma \mathbf{w})|^2 ds \leq C^{\prime \prime} |\mathbf{w}|^2_{1,E}.
\end{equation}
Therefore,
\begin{equation}
    \frac{1}{h_E^2} \sum \limits^{N_E}_{i=1} |\mathbf{w}(\mathbf{x}_i)|^2 \geq C_* |\mathbf{w}|_{1,E}.
\end{equation}

\end{document}